\DeclareMathOperator{\clo}{clo}
\DeclareMathOperator{\diag}{diag}
\DeclareMathOperator{\fun}{fun}
\DeclareMathOperator{\id}{id}
\DeclareMathOperator{\op}{op}
\DeclareMathOperator{\ps}{ps}
\DeclareMathOperator{\Proj}{Proj}
\DeclareMathOperator{\red}{red}
\DeclareMathOperator{\std}{std}
\DeclareMathOperator{\Spec}{Spec}
\DeclareMathOperator{\CAlg}{CAlg}
\DeclareMathOperator{\GL}{GL}
\DeclareMathOperator{\SL}{SL}
\DeclareMathOperator{\SO}{SO}
\DeclareMathOperator{\Oo}{O}
\newcommand{\bC}{\mathbb{C}}
\newcommand{\bG}{\mathbb{G}}
\newcommand{\bP}{\mathbb{P}}
\newcommand{\bR}{\mathbb{R}}
\newcommand{\bZ}{\mathbb{Z}}
\newcommand{\cB}{\mathcal{B}}
\newcommand{\cF}{\mathcal{F}}
\newcommand{\cG}{\mathcal{G}}
\newcommand{\cL}{\mathcal{L}}
\newcommand{\cO}{\mathcal{O}}
\newcommand{\cV}{\mathcal{V}}
\newcommand{\fa}{\mathfrak{a}}
\newcommand{\fb}{\mathfrak{b}}
\newcommand{\fg}{\mathfrak{g}}
\newcommand{\fgl}{\mathfrak{gl}}
\newcommand{\fh}{\mathfrak{h}}
\newcommand{\fm}{\mathfrak{m}}
\newcommand{\fp}{\mathfrak{p}}
\newcommand{\fq}{\mathfrak{q}}
\newcommand{\fsl}{\mathfrak{sl}}
\newcommand{\fso}{\mathfrak{so}}
\newcommand{\ft}{\mathfrak{t}}
\theoremstyle{plain}
\newtheorem{thm}{Theorem}[section]
\newtheorem{cor}[thm]{Corollary}
\newtheorem{lem}[thm]{Lemma}
\newtheorem{prop}[thm]{Proposition}
\theoremstyle{definition}
\newtheorem{ex}[thm]{Example}
\newtheorem{prob}[thm]{Problem}
\newtheorem{property}[thm]{Property}
\newtheorem{rem}[thm]{Remark}
\begin{document}
	\title{$\SO(3)$-homogeneous decomposition of the flag scheme of $\SL_3$ over $\bZ\left[1/2\right]$}
	\author{Takuma Hayashi}
	\address{Department of Pure and Applied Mathematics, Graduate School of Information Science and Technology, Osaka University, 1-5 Yamadaoka, Suita, Osaka 565-0871, Japan}
	\email{hayashi-t@ist.osaka-u.ac.jp}
	\date{}
	\maketitle
	\begin{abstract}
		In this paper, we give $\bZ\left[1/2\right]$-forms of $\SO(3,\bC)$-orbits in the flag variety of $\SL_3(\bC)$. We also prove that they give a $\bZ\left[1/2\right]$-form of the $\SO(3,\bC)$-orbit decomposition of the flag variety of $\SL_3$.
	\end{abstract}
	\section{Introduction}\label{sec:intro}
	
	Motivated by applications to special values of automorphic $L$-functions, Michael Harris, G\"unter Harder, and Fabian Januszewski started to work on $(\fg,K)$-modules over number fields and localization of the rings of their integers in the 2010s (\cite{MR3053412,MR4073199, MR3970997, MR3770183, MR3937337, 1604.04253}). For general theory of $(\fg,K)$-modules over commutative rings, see \cite{MR4007195,MR3853058}. Among those, Harris proposed to construct rational models of discrete series representations from the corresponding closed $K$-orbits in the flag variety and line bundles on them over the field $\bC$ of complex numbers through the localization. In \cite{hayashijanuszewski}, we studied descent properties of rings of definition of certain closed $K$-orbits in the moduli scheme of parabolic subgroups of $G$ for reductive group schemes $K\subset G$ in the sense of \cite[D\'efinition 2.7]{MR0228502}. As a consequence, we established real and smaller arithmetic forms of $A_{\fq}(\lambda)$-modules (\cite[Section 6.2]{hayashijanuszewski}).
	
	In this paper, we study rings of definition of the remaining three $\SO(3,\bC)$-orbits in the complex flag variety of $\SL_3$. The main result is to establish a $\bZ\left[1/2\right]$-analog of the $\SO(3,\bC)$-orbit decomposition of the complex flag variety of $\SL_3$:
	
	\begin{thm}[Theorem \ref{thm:openorbit}, Theorem \ref{thm:iisimmersion}, Lemma \ref{lem:iclo}, Theorem \ref{mainthm}]\label{roughstatement}
		The flag scheme $\cB_{\SL_3}$ of $\SL_3$ over $\bZ\left[1/2\right]$ is decomposed into four affinely imbedded subschemes which are $\SO(3)$-homogeneous in the \'etale topology in the sense of \cite[Proposition et d\'efintion 6.7.3]{MR0237513}.
	\end{thm}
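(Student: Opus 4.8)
This statement amalgamates the four cited results, so the plan is to construct over $\bZ\left[1/2\right]$ four subschemes $\mathcal Y_0,\mathcal Y_1,\mathcal Y_2,\mathcal Y_3$ of $\cB_{\SL_3}$, to check one at a time that each is $\SO(3)$-stable, affinely imbedded, and $\SO(3)$-homogeneous for the \'etale topology, and to show that $\cB_{\SL_3}=\mathcal Y_0\sqcup\mathcal Y_1\sqcup\mathcal Y_2\sqcup\mathcal Y_3$ as a disjoint union of locally closed subschemes which, fibrewise, is the $\SO(3)$-orbit decomposition. The complex picture fixes the candidates. Writing a complete flag of $\bC^{3}$ as $0\subset L\subset P\subset\bC^{3}$ and letting $q$ be the quadratic form defining $\SO(3)$, the group $\SO(3,\bC)$ has exactly four orbits on $\cB_{\SL_3}\otimes\bC$, indexed by whether $L$ is isotropic for $q$ and by $\rank(q|_P)$: the closed orbit $\{L\text{ isotropic},\ P=L^{\perp}\}$ of dimension $1$, the two intermediate orbits $\{L\text{ isotropic},\ q|_P\text{ nondegenerate}\}$ and $\{L\text{ anisotropic},\ \rank(q|_P)=1\}$ of dimension $2$, and the open orbit $\{L\text{ anisotropic},\ q|_P\text{ nondegenerate}\}$. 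Because $2$ is a unit in $\bZ\left[1/2\right]$, these conditions are governed by two natural sections on $\cB_{\SL_3}$: the restriction $q|_L$, a section of $(L^{\otimes 2})^{\vee}$ arising from $L\hookrightarrow\cO^{3}\xrightarrow{\,q\,}(\cO^{3})^{\vee}\twoheadrightarrow L^{\vee}$, and $\det(q|_P)$, a section of $(\det P)^{\otimes -2}$. Defining $\mathcal Y_i$ by the four combinations of ``$q|_L$ zero/nonzero'' with ``$\det(q|_P)$ zero/nonzero'' makes each $\mathcal Y_i$ locally closed and the decomposition $\cB_{\SL_3}=\mathcal Y_0\sqcup\mathcal Y_1\sqcup\mathcal Y_2\sqcup\mathcal Y_3$ formal; it then remains to prove flatness over $\bZ\left[1/2\right]$ with the expected geometric fibres, affine-imbeddedness, and \'etale-local $\SO(3)$-homogeneity.

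I would dispose of the open orbit $\mathcal Y_3=\{q|_L\neq0,\ \det(q|_P)\neq0\}$ first (Theorem \ref{thm:openorbit}). Its complement in the proper $\bZ\left[1/2\right]$-scheme $\cB_{\SL_3}$ is the effective Cartier divisor $\{q|_L=0\}+\{\det(q|_P)=0\}$; checking this divisor is ample --- or, concretely, inspecting $\mathcal Y_3$ in the six standard affine charts $\cong\bA^{3}$ of $\cB_{\SL_3}$ --- shows $\mathcal Y_3$ is affine, so its open immersion is affine. For \'etale-local homogeneity, take the base flag $L=\langle e_{1}\rangle\subset P=\langle e_{1},e_{2}\rangle$ for $q=x^{2}+y^{2}+z^{2}$; its stabilizer in $\SO(3)$ is the group of sign-diagonal matrices of determinant $1$, that is, the constant group scheme $(\bZ/2)^{2}$, which is finite \'etale over $\bZ\left[1/2\right]$ --- this is precisely the point at which inverting $2$ is used. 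The orbit morphism $\SO(3)\to\mathcal Y_3$ is then finite \'etale and identifies $\mathcal Y_3$ with the quotient $\SO(3)/(\bZ/2)^{2}$, homogeneous in the sense of \cite{MR0237513} Proposition et d\'efinition 6.7.3.

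The closed orbit $\mathcal Y_0=\{q|_L=0,\ \det(q|_P)=0\}$ consists of flags with $L$ isotropic and $P=L^{\perp}$; it is identified with the smooth projective conic of isotropic lines, a \emph{closed} subscheme of $\cB_{\SL_3}$ (Lemma \ref{lem:iclo}), hence affinely imbedded for free, and $\SO(3)$ acts on it \'etale-locally transitively with stabilizer \'etale-locally a Borel subgroup scheme. The two intermediate orbits $\mathcal Y_1=\{q|_L=0,\ \det(q|_P)\neq0\}$ and $\mathcal Y_2=\{q|_L\neq0,\ \det(q|_P)=0\}$ (Theorem \ref{thm:iisimmersion}) are treated in parallel, with one additional wrinkle: each is merely locally closed, and to see that its immersion into $\cB_{\SL_3}$ is affine one must realize $\mathcal Y_i$ as a closed subscheme of a suitable $\SO(3)$-stable affine open of $\cB_{\SL_3}$ --- equivalently, show that $\overline{\mathcal Y_i}\setminus\mathcal Y_i=\mathcal Y_0$ has affine complement in the schematic closure $\overline{\mathcal Y_i}$. \'Etale-local homogeneity then follows from a base point with stabilizer $\cong\bG_m$, for instance $\langle e_{1}+\sqrt{-1}\,e_{2}\rangle\subset\langle e_{1},e_{2}\rangle$ for $\mathcal Y_1$ and $\langle e_{1}\rangle\subset\langle e_{1},e_{2}+\sqrt{-1}\,e_{3}\rangle$ for $\mathcal Y_2$, defined over the \'etale $\bZ\left[1/2\right]$-algebra $\bZ\left[1/2\right]\!\left[\sqrt{-1}\right]$.

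Finally (Theorem \ref{mainthm}) one verifies that, fibrewise, the formal partition above is the $\SO(3)$-orbit decomposition: on every geometric fibre $\cB_{\SL_3}\otimes\overline{\bF_{p}}$ with $p\neq2$ and on the generic fibre, the four loci are exactly the $\SO(3)$-orbits, by the direct orbit computation for $\SO(3)\subset\SL_3$, which is uniform in characteristic $\neq2$ since $\SO(3)\cong\PGL_2$ there; together with flatness this also pins down the geometric fibres of the $\mathcal Y_i$. I expect the main obstacle to be twofold. First, upgrading ``fppf-homogeneous'' to ``\'etale-homogeneous'' requires the stabilizer group schemes to be smooth over $\bZ\left[1/2\right]$; the delicate case is $\mathcal Y_3$, whose stabilizer $(\bZ/2)^{2}$ is smooth precisely because $2$ is invertible --- this is the true reason the base is $\bZ\left[1/2\right]$ and not $\bZ$. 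Second, proving that the immersions $\mathcal Y_1\hookrightarrow\cB_{\SL_3}$ and $\mathcal Y_2\hookrightarrow\cB_{\SL_3}$ are affine \emph{uniformly} over $\bZ\left[1/2\right]$: over $\bC$ it suffices to check an ampleness statement on the closure, but integrally one must produce the correct $\SO(3)$-stable affine charts and confirm the immersions remain affine at every residue characteristic $\neq2$.
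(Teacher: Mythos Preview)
Your outline is essentially sound and reaches the same decomposition as the paper, but the route you take for the two middle pieces is genuinely different from the paper's, and there are two technical slips worth flagging.

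\textbf{Comparison of approaches.} For $\mathcal Y_1$ and $\mathcal Y_2$ you define the loci directly over $\bZ[1/2]$ as vanishing/non-vanishing loci of the two global sections $q|_L$ and $\det(q|_P)$, and then argue homogeneity by producing a base point over the \'etale extension $\bZ[1/2,\sqrt{-1}]$. The paper goes in the opposite direction: it first forms the orbit sheaves $\SO(3)/\SO(2)$ over $\bZ[1/2,\sqrt{-1}]$, realizes them explicitly as the affine sphere $\Spec A$, and then \emph{descends} these to $\bZ[1/2]$ by a Galois argument whose key point is that the matrix $w_0=\diag(1,-1,-1)\in\SO(3)$ implements the conjugation action on the chosen base points (this is the mechanism advertised in the introduction). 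Only afterwards does the paper identify the image with the locus cut out by the invariants $c_1,c_2,c_3$ (its Lemma 3.8), which are exactly your sections. Your approach makes the $\bZ[1/2]$-structure and the set-theoretic decomposition immediate and bypasses the descent computation; the paper's approach makes homogeneity automatic (one starts from a quotient) and the $w_0$ trick is what buys the $\bZ[1/2]$-form. Either way one still needs a Gram--Schmidt type calculation to match the orbit with the locus, so the total work is comparable.

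\textbf{Two fixable issues.} First, your $\mathcal Y_0=\{q|_L=0,\ \det(q|_P)=0\}$ is \emph{not} reduced as a scheme: in local coordinates one has $c_2\equiv -c_3^2\pmod{c_1}$, so the ideal $(c_1,c_2)$ equals $(c_1,c_3^2)$, and the quotient carries the nilpotent $c_3$. The paper avoids this by defining the closed piece as the fixed-point scheme $\cB_{\SL_3}^\theta$ (equivalently $\{c_1=c_3=0\}$), which is smooth; you should either take the reduced structure or use the condition $P=L^\perp$ scheme-theoretically rather than $\det(q|_P)=0$. Second, your two formulations for affineness of $\mathcal Y_i\hookrightarrow\cB_{\SL_3}$ are not equivalent: there is \emph{no} $\SO(3)$-stable affine open of $\cB_{\SL_3}$ containing $\mathcal Y_1$ or $\mathcal Y_2$, because the candidate $\{\det(q|_P)\neq 0\}$ (resp.\ $\{q|_L\neq 0\}$) is a $\bP^1$-bundle over an affine base, hence not affine. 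Your second formulation --- that $\mathcal Y_i$ itself is affine, so that any morphism from it to the separated scheme $\cB_{\SL_3}$ is affine --- is the correct one; the paper establishes this in a more hands-on way by showing that the pullback along any $\Spec R\to\cB_{\SL_3}$ is $\Spec$ of a localization modulo one equation.
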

	
	\subsection{First Perspective: $(\fg,K)$-Modules over Commutative Rings}\label{sec:representationtheory}
	
	In the representation theory of Lie groups, there are many phenomena which we can understand through real and complex geometry. For instance, there are two geometric realizations of principal series representations of the special linear Lie group $\SL_3(\bR)$. For simplicity, we restrict ourselves to (the Harish-Chandra module of) the principal series representation $X_{\ps}$ with trivial parameter. We can realize $X_{\ps}$ by using the real flag manifold $\SL_3(\bR)/B_{\std}(\bR)$, where $B_{\std}(\bR)$ is the Borel subgroup of $\SL_3(\bR)$ consisting of upper triangular matrices. The representation $X_{\ps}$ can be realized as the space of functions of $\SL_3(\bR)/B_{\std}(\bR)$. According to the Iwasawa decomposition, it can be identified with $\SO(3,\bR)/B_{\std}(\bR)\cap\SO(3,\bR)$, where $\SO(3,\bR)$ is the special orthogonal group. This is a geometric explanation why $X_{\ps}$ is induced from the trivial representation of $\SO(3,\bR)\cap B_{\std}(\bR)$ as a representation of $\SO(3,\bR)$. We also note that $X_{\ps}$ admits a natural real structure by this construction. The other realization is to use the complex flag variety of $\SL_3$. We define the complex algebraic groups $\SL_3(\bC)$, $B_{\std}(\bC)$, $\SO(3,\bC)$ in a similar way. Then, we have a unique open $\SO(3,\bC)$-orbit in the complex flag variety $\SL_3(\bC)/B_{\std}(\bC)$. As a complex $\SO(3,\bC)$-variety, it is given by $\SO(3,\bC)/\SO(3,\bC)\cap B_{\std}(\bC)$. The representation $X_{\ps}$ can be realized as the space of regular functions on this orbit. We can think of these regular functions as global sections of the pushforward of the coordinate ring of this orbit to $\SL_3(\bC)/B_{\std}(\bC)$. These two realizations are related by the analytic continuation.
	
	What happens to other kinds of representations? The Beilinson-Bernstein correspondence tells us that irreducible Harish-Chandra modules of $\SL_3(\bR)$ with trivial infinitesimal character are obtained by D-modules on the complex flag variety. For example, the fundamental representation $A_\fb(0)$ is attached to the unique closed $\SO(3,\bC)$-orbit. Can we obtain them from real geometric objects? The transitivity of the $\SO(3)$-action on the real flag manifold of $\SL_3$ tells us that principal series representations should be the only representations which we can obtain from the real flag \textit{manifold}. 
	
	To give another nice answer to the above question, we shall change the point of view. Since there is only one closed $\SO(3,\bC)$-orbit in $\SL_3(\bC)/B_{\std}(\bC)$, this orbit is stable under formation of the complex conjugation. Hence, it is naturally defined over the real numbers by Galois descent. In fact, we realized this object as the real flag \textit{variety} of $\SO(3)$ in \cite[Section 5]{hayashijanuszewski}. More strongly, we gave an equivariant closed immersion of flag schemes over $\bZ\left[1/2\right]$. The point is that this real algebraic variety which does exist does not admit real points. Hence, it cannot appear in the formalism of manifolds. It is not an $\SO(3)$-orbit as well by the same reason. However, it is homogeneous in the sense of \cite[Proposition et d\'efintion 6.7.3]{MR0237513}. This appears to be a nice formalism to study ``orbit-like'' objects over general base fields (rings). This real algebraic subvariety gives rise to a real form of the fundamental series representation $A_\fb(0)$ via localization (\cite[Section 6.2]{hayashijanuszewski}). The existence of the real form of $A_\fb(0)$ was proved algebraically in \cite[Theorem 7.3]{MR3770183}.
	
	The main purpose of this paper is to construct $\bZ\left[1/2\right]$-forms of the remaining $\SO(3,\bC)$-orbits explicitly. Philosophically, this result says that all irreducible admissible representations of $\SL_3(\bR)$ (with regular infinitesimal character) are controlled by real algebro-geometric objects (cf.\ \cite[Definition 3.5.1, Corollary 4.2.2]{hayashijanuszewski}). We will even control representations over $\bZ[1/2]$ at the level of orbits. As a consequence of our direct computational approach, we find that the $\bZ[1/2]$-forms of the orbits are represented by affine schemes except the closed one. This fact makes the study of the global section modules of the direct and proper direct image twisted D-modules (cf.~\cite[Appendix A]{hayashifil}).

	\subsection{Second Perspective: Combinatorics of Orbit Decomposition}\label{sec:combinatorics}
	
	Traditional problems in the theory of combinatorics of orbit decomposition are summarized as follows:
	\begin{prob}\label{prob:orbit}
		Let $k$ be a commutative ring, and $X$ be a $k$-scheme, equipped with an action of a group scheme $K$ over $k$. Classify the $K(F)$-orbits in $X(F)$ for a field $F$ over $k$.
	\end{prob}
	
	The ring $k$ in Problem \ref{prob:orbit} should be usually a certain localization of the ring of integers or the field of rational numbers. The description of the classification depends on $F$ in most cases. For instance, the number of orbits may differ by $F$. On the other hand, one can find by experience that some parts of the classifications are independent of $F$. In fact, we (possibly implicitly) happen to solve equations on the course of classification by nature of the algebro-geometric formulation. Among those equations, some may be defined over $k$. As far as such equations are concerned, there are two factors why the dependence happens:
	\begin{enumerate}
		\item[1.] Existence of solutions;
		\item[2.] Dependence on choice of solutions (Galois symmetry).
	\end{enumerate}
	Hence, the larger rings $k'$ we replace $k$ by, the more uniform classification we obtain.
	
	In this paper, we suggest the three things.
	\begin{enumerate}
		\item[1.] We quit solving equations whose solutions essentially depend on fields $F$.
		\item[2.] Attach a Galois extension $k\to k'$ with Galois group $\Gamma$ to an independent equation. Then, we find $\Gamma$-invariant parts of a decomposition of $X(F')$ for fields $F'$ over $k'$.
		\item[3.] We do these things at the level of schemes to get a decomposition of $X$ into $K$-invariant subspaces $Z_\lambda$ as a set.
	\end{enumerate}
	Decomposition into subspaces $Z_\lambda$ may not give a complete answer to Problem \ref{prob:orbit} but instead that we obtain a uniform decomposition of $X(F)$ in $F$ into $K(F)$-invariant subsets $Z_\lambda(F)$. In fact, $Z_\lambda(F)$ can have multiple $K(F)$-orbits. We explain below that $Z_\lambda(F)$ can be also empty.
	
	\begin{prob}
		Decompose $X$ into smaller pieces of $K$-invariant subspaces represented by $k$-schemes.
	\end{prob}
	
	We would like to suggest a basic strategy to get $Z_\lambda$, which consists of four phases.
	\begin{enumerate}
		\item[1.] Take a Galois extension $k\to k'$.
		\item[2.] Define $K\otimes_k k'$-orbits of $X\otimes_k k'$ by taking $k'$-points of $X$.
		\item[3.] Prove that the $K\otimes_k k'$-orbits decompose $X\otimes_k k'$ as a set.
		\item[4.] Study the Galois orbit of the set of $K\otimes_k k'$-orbits to get subspaces over $k$ by the Galois descent.
	\end{enumerate}
	The last phase says that we prove that a $K\otimes_k k'$-orbit admits a $k$-form by showing that it admits a Galois action; otherwise, we get a $k$-scheme by joining $K\otimes_k k'$-orbits. For example, see \cite[Example 5.2.22]{hayashijanuszewski}. We can regard that \cite[Proposition 5.1.1]{hayashijanuszewski} is the case that the Galois orbit has two elements ($K=\SO(2)$, $k=\bZ\left[1/2\right]$, $k'=\bZ\left[1/2,\sqrt{-1}\right]$). This is the reason why $Z_\lambda(k)$ may be empty. This observation tells us that even if the Galois orbit is a singleton, the expected ``orbits'' (subspaces) $Z_\lambda$ may not have a base point. The author believes that the key ingredients to achieve each phase of our program lie in the combinatorial study of Problem \ref{prob:orbit}.
	
	More specifically, we shall think of the $K_\bC$-orbit decomposition of flag varieties. For this, recall the Matsuki classification (\cite{MR527548}, see also \cite{MR1066573} for 
	similar results over algebraically closed fields of characteristic $\neq 2$). Let $G$ be a connected real reductive algebraic group, and $\fg_0$ be its Lie algebra. Let $K$ be a maximal compact subgroup of the group $G(\bR)$ of real points of $G$, and $K_\bC$ be its complexification. Let $\theta$ be the Cartan involution relative to $K$. For a $\theta$-stable Cartan subalgebra $\fh_0\subset\fg_0$, set
	\[\begin{array}{ccc}
		\fh=\fh_0\otimes_\bR \bC,&	W_G(\fg,\fh)=N_{G(\bC)}(\fh)/Z_{G(\bC)}(\fh),
		&W_G(\fg,\fh)^\sigma=N_{K}(\fh)/Z_{K}(\fh),
	\end{array}\]
	where $N$ and $Z$ denote the normalizer and the centralizer respectively. Let $\cB_G$ be the flag variety of $G$. There exists a bijection
	\[K_\bC\backslash \cB_G(\bC)\cong\coprod_{\fh_0}W_G(\fg,\fh)^\sigma\backslash W_G(\fg,\fh),\]
	where $\fh_0$ runs through fixed representatives of $K$-conjugacy classes of $\theta$-stable Cartan subalgebras.
	\begin{ex}
		The special orthogonal group $\SO(2,\bR)$ acts on the real projective line $\bP^1(\bR)$ transitively. On the other hand, there are three $\SO(2,\bC)$-orbits in $\bP^1(\bC)$. This difference suggests us to decompose $\bP^1$ into the $\SO(2)$-orbit containing real points and the others which are closed. Since the complex conjugation switches the two closed orbits $\{\sqrt{-1}\}$ and $\{-\sqrt{-1}\}$ in $\mathbb{C}\cup\{\infty\}\cong\mathbb{P}^1(\mathbb{C})$, the two $\SO(2,\bC)$-orbits are not defined over $\bR$ but their union is. Technically, observe that the Galois action $\sqrt{-1}\mapsto -\sqrt{-1}$ is represented by the diagonal matrix $\diag(1,-1)\in\Oo(2,\bR)\setminus\SO(2,\bR)$, where $\Oo(2,\bR)$ is the orthogonal group. Moreover, $\Oo(2,\bC)$ is generated by $\SO(2,\bC)$ and this matrix. Therefore, the union is an $\Oo(2,\bC)$-orbit defined over $\bR$. Based on this idea, we proved a $\bZ$-analog of the $\Oo(2,\bC)$-orbit decomposition of the flag variety of $\GL_2$ in \cite[Section 5.1]{hayashijanuszewski}. We summarize the present observation as follows: the closed $\SO(2,\bC)$-orbits are not defined over $\bR$ but that the closed $\Oo(2,\bC)$-orbit is so. This difference comes from the fact $\diag(1,-1)\in\Oo(2,\bR)\setminus\SO(2,\bR)$. This matrix appears in the Matsuki classification. In fact, we denote split Cartan subalgebras of the real special linear Lie algebra $\fsl_2(\bR)$ and the real general linear lie algebra $\fgl_2(\bR)$ by the same symbol $\fh_{\std,0}$, and denote the fundamental Cartan subalgebras of $\fsl_2(\bR)$ and $\fgl_2(\bR)$ containing the orthogonal Lie algebra $\fso(2,\bR)$ by the same symbol $\fh_{\fun,0}$. Then, we have
		\[W_{\SL_2}(\fsl_2,\fh_{\std})^\sigma\cong W_{\SL_2}(\fsl_2,\fh_{\std})\cong W_{\GL_2}(\fgl_2,\fh_{\std})\cong W_{\GL_2}(\fgl_2,\fh_{\std})^\sigma,\]
		\[\begin{array}{cc}
			W_{\SL_2}(\fsl_2,\fh_{\fun})^\sigma=\{1\},
			&W_{\GL_2}(\fgl_2,\fh_{\fun})^\sigma\cong\bZ/2\bZ.
		\end{array}\]
		The nontrivial element of $W_{\GL_2}(\fg,\fh_{\fun})^\sigma$ is represented by $\diag(1,-1)$.
	\end{ex}
	\begin{ex}\label{ex:sl_3}
		Put $G=\SL_3$. The compact Lie group $\SO(3,\bR)$ acts transitively on $\cB_{\SL_3}(\bR)$, and there are four $\SO(3,\bC)$-orbits in $\cB_{\SL_3}(\bC)$. One should separate the $\SO(3)$-orbit containing real points and the others. We can see that the unique closed $\SO(3,\bC)$-orbit is defined over the real numbers from \cite[Proposition 5.5.2]{hayashijanuszewski}. The key idea for its proof is that a corresponding Borel subgroup of $\SL_3$ is determined by a regular cocharacter to $\SO(3,\bC)$, and that its conjugation is expressed by the action of $w_0=\diag(1,-1,-1)\in\SO(3,\bR)$.
		That is, $w_0$ plays the role of the Galois action. This should happen since $w_0=-1$ as an element of the Weyl group of $\SO(3,\bC)$. The main idea of this paper is as follows: $w_0$ also plays the role of the Galois action to certain Borel subgroups corresponding to the other two orbits since the Weyl group of $\SL_3(\bC)$ does not contain $-1$. For example, we can explain it at the Lie algebra level as follows: set
		\[\ft_{\fun,0}=\bR\left(\begin{array}{ccc}
			0&1&0\\
			-1&0&0\\
			0&0&0
		\end{array}\right),\]
		and $\fh_{\fun,0}\subset\fsl(3,\bR)$ be the fundamental Cartan subalgebra containing $\ft_{\fun,0}$. Let
		$\fh_{\fun,0}=\ft_{\fun,0}\oplus \fa_{\fun,0}$
		denote the Cartan decomposition. Then, $w_0$ acts on $\ft_{\fun,0}$ by $-1$. Since $w_0\neq -1$ and $w_0^2=1$ as an element of the Weyl group of $\SL_3$, and $\dim\fa_{\fun,0}=1$, $w_0$ acts on $\fa_{\fun,0}$ by 1. Since an element of $\fh_{\fun}$ determining each of the above two Borel subgroups belongs to $\sqrt{-1}\ft_{\fun,0}\oplus\fa_{\fun,0}$, the action of $w_0$ on this element coincides with the conjugate action. In this paper, we improve this idea to work over $\bZ\left[1/2\right]$.
	\end{ex}
	
	Like the last part of Example \ref{ex:sl_3}, we will have to analyze combinatorial results carefully in general to get hints from them. The author is working in progress on a $\bZ\left[1/2\right]$-analog of the $K_\bC$-orbit decomposition of the flag varieties for higher rank classical groups by proactive use of \cite{MR1066573}. There is also a more general formalism:
	
	\begin{prob}
		Let $G$ be a reductive group scheme over $k$, and $K$ be a closed subgroup scheme of $G$. Decompose the moduli scheme of parabolic subgroups of $G$ into smaller pieces of $K$-invariant subschemes.
	\end{prob}
	
	Note that symmetric subgroups in the sense of \cite[Example 3.1.2]{MR4627704} will be typical examples of $K$.
	
	\subsection{Organization of this Paper}\label{sec:organization}
	
	In Section \ref{sec:settheoreticdecomposition}, we collect some general results on decompositions of schemes to verify ideas of Section \ref{sec:combinatorics}. In Appendix \ref{sec:descent}, we collect some general results on descent techniques in abstract algebraic geometry. They will be helpful when we try to find forms of orbit decompositions of schemes, based on our general program of Section \ref{sec:combinatorics}. In Section \ref{sec:orbit}, we use them to construct $\bZ\left[1/2\right]$-forms of the $\SO(3,\bC)$-orbits in $\cB_{\SL_3}(\bC)$. We also give their moduli descriptions. We cost many pages to this section for confirming the moduli descriptions (particularly Theorem \ref{thm:openorbit} and Lemma \ref{lem:translation1}) because we perform the Gram-Schmidt process and its versions explicitly and independently to the general matrices of size $3\times 3$. As a result, we find explicit formulas of the defining relations of the $\SO(3)$-homogeneous subschemes of $\cB_{\SL_3}$. We also cost pages to the proof of the isomorphism $\SO(3)/\SO(2)\cong \Spec\bZ[1/2,\sqrt{-1},x,y,z]/(x^2+y^2+z^2-1)$ in Proposition \ref{prop:SO(3)/SO(2)=S^2}, based on the sheaf-theoretic definition of $\SO(3)/\SO(2)$. In fact, we construct the inverse of the canonical map from left to right \'etale locally. On these courses and the formulations to these results, we meet many matrices of size $3\times 3$. Section \ref{sec:kgb} is devoted to the conclusion. In Appendix \ref{sec:flagschofso(3)}, we use ideas of this paper to give a reasonable realization of the flag scheme of $\SO(3)$ over $\bZ\left[1/2\right]$. We also establish a $\bZ\left[1/2\right]$-form of the $\SO(3,\bC)$-orbit decomposition of a proper complex partial flag variety of $\SL_3$. We again meet matrices of size $3\times 3$ which take large space. Totally, the many pages are needed for the case-by-case studies of the orbits through computations of large matrices.
	
	\subsection{Notation}\label{sec:notation}
	
	We follow \cite{MR4627704} for the notations and conventions. In the below, we list additional notations.
	
	To save space, we denote vertical vectors in $R^3$ as $(a_1~a_2~a_3)^T$ for a commutative ring $R$.
	
	For a field $F$, we denote its algebraic closure by $\bar{F}$.
	
	Let $k$ be a commutative ring. Let $\CAlg_k^{\red}$ denote the full subcategory of $\CAlg_k$ consisting of reduced $k$-algebras. A sheaf on the big affine \'etale site over $k$ will be called an \'etale $k$-sheaf, which will be identified with a copresheaf on $\CAlg_k$ in this paper. If necessary, see \cite[Section 2]{MR3495343} for the general formalism of sheaves on sites. We will regard $k$-schemes as \'etale $k$-sheaves by the restricted Yoneda functor (see \cite[Theorem 4.1.2]{MR3495343}). For a copresheaf $\cF$ on $\CAlg_k$ and a $k$-algebra $R$, we will sometimes identify an element $x\in\cF(R)$ with a natural transformation $\Spec R\to \cF$ of copresheaves by the Yoneda lemma. It is evident by definitions that for any homomorphism $k\to k'$ of commutative rings, the base change $-\otimes_k k'$ sends \'etale $k$-sheaves to \'etale $k'$-sheaves.
	
	For the formalism of quotient by group schemes, we adopt the quotient in the \'etale topology. That is, let $k$ be a commutative ring, $G$ be a group $k$-scheme, and $H\subset G$ be a subsegroup $k$-scheme. Then, $G/H$ is the \'etale sheafification of the $k$-space defined by $R\mapsto G(R)/H(R)$. See \cite{MR0237513} for the general formalism. Although $G/H$ is not represented by a $k$-scheme in general, we will see that the quotients appearing in this paper are representable.
	
	For a reductive group scheme $G$ over a scheme $S$ in the sense of \cite[D\'efinition 2.7]{MR0228502}, the moduli scheme of Borel subgroups of $G$ will be denoted by $\mathcal B_G$ (see \cite[Corollaire 5.8.3 (i)]{MR0218363}).
	
	Let $X$ be a scheme over $k$. Let $|X|$ denote the underlying set of $X$. We will use a similar notation for morphisms of schemes. For a point $x\in |X|$, let $\kappa(x)$ denote the residue field at $x$. If we are given a field $F$ over $k$ and an element $x\in X(F)$, we will also denote the residue field of $X$ at the image of the point of $|\Spec F|$ along the map $|x|:|\Spec F|\to |X|$ by the same symbol $\kappa(x)$. For a point $x\in |X|$, we denote the geometric point $\Spec\overline{\kappa(x)}\to\Spec\kappa(x)\to X$ by $\bar{x}$.
	
	Let $B_{\std}$ denote the Borel subgroup of $\SL_3$ over $\bZ\left[1/2\right]$ consisting of upper triangular matrices.
	
	\section{Remark on Set-Theoretic Decomposition of Schemes}\label{sec:settheoreticdecomposition}
	
	Fix $k$ as a commutative ground ring. Let $X$ be a $k$-scheme, and $\{i_\lambda:Z_\lambda\to X\}_{\lambda\in\Lambda}$ be a small set of monomorphisms of $k$-schemes. We say that $\{Z_\lambda\}$ exhibits a set-theoretic decomposition of $X$ if the canonical map
	$\coprod_{\lambda\in\Lambda} |Z_\lambda|\to |X|$
	is a bijection. The goal of this paper is to decompose the flag scheme $\cB_{\SL_3}$ into $\SO(3)$-invariant subschemes as a set. In this section, we note some general results on set-theoretic decompositions of schemes to relate them with the results of combinatorics.
	
	\begin{thm}\label{thm:fieldwisedec}
		The following conditions are equivalent:
		\begin{enumerate}
			\renewcommand{\labelenumi}{(\alph{enumi})}
			\item The canonical map
			$\coprod_{\lambda\in\Lambda} |Z_\lambda|\to |X|$
			is a bijection.
			\item The canonical map
			$\coprod_{\lambda\in\Lambda} Z_\lambda(F)\to X(F)$
			be a bijection for every field $F$ over $k$.
			\item The canonical map
			$\coprod_{\lambda\in\Lambda} Z_\lambda(F)\to X(F)$
			be a bijection for every algebraically closed field $F$ over $k$.
		\end{enumerate}
	\end{thm}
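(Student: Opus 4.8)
The plan is to run the cycle (a) $\Rightarrow$ (b) $\Rightarrow$ (c) $\Rightarrow$ (a), of which only the middle implication is immediate, since every algebraically closed field over $k$ is in particular a field over $k$. Everything hinges on one standard fact about monomorphisms, which I would isolate at the outset: \emph{a monomorphism $f\colon Z\to X$ of schemes is injective on underlying spaces, and for every $z\in Z$ the induced homomorphism $\kappa(f(z))\to\kappa(z)$ on residue fields is an isomorphism.} The quick reason: set $x=f(z)$ and base change $f$ along $\Spec\kappa(x)\to X$; monomorphisms are stable under base change, so $Z_x:=Z\times_X\Spec\kappa(x)\to\Spec\kappa(x)$ is again a monomorphism, and it is nonempty because $z\in f^{-1}(x)$. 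Any nonempty affine open $\Spec C\subseteq Z_x$ then maps monomorphically to $\Spec\kappa(x)$, i.e.\ the multiplication $C\otimes_{\kappa(x)}C\to C$ is an isomorphism; writing an element $c\in C$ in a $\kappa(x)$-basis of $C$ containing $1$ and comparing $c\otimes1$ with $1\otimes c$ forces $c\in\kappa(x)\cdot1$, so $C=\kappa(x)$. Hence $Z_x\cong\Spec\kappa(x)$, which shows that $f^{-1}(x)$ consists of a single point and, reading off residue fields there, identifies $\kappa(z)$ with $\kappa(x)$ compatibly with $f$.

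Granting this, I would use the tautological decomposition $Y(F)=\coprod_{y\in|Y|}\Hom_k(\kappa(y),F)$, valid for any $k$-scheme $Y$ and any field $F$ over $k$ (the structure morphism makes each $\kappa(y)$ a $k$-algebra), under which the natural map $Y(F)\to|Y|$ remembers the image point. Write $\phi\colon\coprod_\lambda|Z_\lambda|\to|X|$ for the canonical map. For (a) $\Rightarrow$ (b), fix a field $F$ over $k$; the lemma gives $\Hom_k(\kappa(z),F)=\Hom_k(\kappa(i_\lambda(z)),F)$ for every $z\in|Z_\lambda|$, hence
\[\coprod_\lambda Z_\lambda(F)=\coprod_\lambda\coprod_{z\in|Z_\lambda|}\Hom_k(\kappa(i_\lambda(z)),F),\]
and if $\phi$ is bijective the right-hand side is identified with $\coprod_{x\in|X|}\Hom_k(\kappa(x),F)=X(F)$; unwinding the identifications shows the resulting bijection is precisely the comparison map in (b).

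For (c) $\Rightarrow$ (a) I would prove $\phi$ bijective directly. Surjectivity: given $x\in|X|$, put $F=\overline{\kappa(x)}$, an algebraically closed field over $k$; the canonical point $\Spec F\to\Spec\kappa(x)\to X$ lies in $X(F)$, so by (c) it is the image of some $z\in Z_\lambda(F)$, and the underlying point of $z$ maps to $x$ under $\phi$. Injectivity: if $w_j\in|Z_{\lambda_j}|$ satisfy $\phi(w_1)=\phi(w_2)=x$, then $\kappa(w_j)\cong\kappa(x)$ by the lemma, and with $F=\overline{\kappa(x)}$ the embeddings $\kappa(w_j)\cong\kappa(x)\hookrightarrow F$ produce $\widetilde w_j\in Z_{\lambda_j}(F)$ that push forward to the \emph{same} element of $X(F)$ --- this last compatibility is exactly the assertion that $i_{\lambda_j}$ respects residue fields. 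Injectivity of $\coprod_\lambda Z_\lambda(F)\to X(F)$ from (c) then forces $\lambda_1=\lambda_2$ and $\widetilde w_1=\widetilde w_2$, and since $w_j$ is the image point of $\widetilde w_j$ we get $w_1=w_2$.

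The only real content is the monomorphism lemma; everything else is formal bookkeeping with $Y(F)=\coprod_y\Hom_k(\kappa(y),F)$. The point I would take care over is to check that each comparison map in (a), (b), (c) and each identification of residue fields is literally the canonical one, so that the bijections assembled above are the maps named in the statement. Conceptually, the moral is that over an arbitrary field $F$ --- not merely over algebraically closed ones --- a monomorphism contributes nothing new precisely because it preserves residue fields; this is what makes (a) equivalent to the strong condition (b) and not just to the weaker (c).
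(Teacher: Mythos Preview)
Your proof is correct and follows essentially the same route as the paper: both hinge on the fact (EGA~IV, Remarque~8.11.5.1, which you re-prove) that a monomorphism of schemes is injective on underlying points and induces isomorphisms on residue fields, and both run the cycle through (a), (b), (c). Your packaging via the tautological decomposition $Y(F)=\coprod_{y\in|Y|}\Hom_k(\kappa(y),F)$ makes (a)$\Rightarrow$(b) a one-line reindexing, which is slightly cleaner than the paper's hands-on injectivity/surjectivity split, but the content is the same.
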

	\begin{proof}	
		It is clear that (b) implies (c). Suppose that $\{Z_\lambda\}$ satisfies (c). We prove $\{Z_\lambda\}$ exhibits a set-theoretic decomposition of $X$. Let $x\in |X|$. Since the map
		\[(i_\lambda):\coprod_\lambda Z_\lambda(\overline{\kappa(x)})\to X(\overline{\kappa(x)})\]
		is a bijection, there exist an index $\lambda$ and a unique element $z_\lambda\in Z_\lambda(\overline{\kappa(x)})$ such that $i_\lambda(z_\lambda)=\bar{x}$. The point $x$ is the image of the unique point of $|\Spec \kappa(x)|$ along the composite map $|\bar{x}|=|i_\lambda|\circ|z_\lambda|:\mid\Spec \overline{\kappa(x)}\mid\to \mid X\mid$.
		This shows that the map $\coprod_\lambda |Z_\lambda|\to |X|$ is surjective. To see that it is injective, recall that each map $|i_\lambda|:|Z_\lambda|\to |X|$ is injective by \cite[Remarque 8.11.5.1]{MR0217086}. Hence it will suffice to show that the images of $|i_\lambda|$ are disjoint in $|X|$. Let $\lambda,\mu\in\Lambda$ be distinct indices, $z_\lambda\in |Z_\lambda|$, and $z_\mu\in|Z_{\mu}|$. Suppose that $|i_\lambda|(z_\lambda)=|i_{\mu}|(z_{\mu})=:x$. Then, we canonically obtain a commutative diagram
		\[\begin{tikzcd}
			\Spec\kappa(z_\lambda)\ar[r, "z_\lambda"]\ar[rrd]&Z_\lambda\ar[r, "i_\lambda"]&X
			&Z_{\mu}\ar[l, "i_{\mu}"']&\Spec \kappa(z_{\lambda'})\ar[l, "z_{\lambda'}"']\ar[lld]\\
			&&\Spec\kappa(x)\ar[u, "x"]
		\end{tikzcd}\]
		Then, we can find an algebraically closed field $F$ enjoying a commutative diagram
		\[\begin{tikzcd}
			\Spec\kappa(z_\lambda)\ar[r]&\Spec\kappa(x)&\Spec \kappa(z_{\mu})\ar[l]\\
			&\Spec F.\ar[u]\ar[ru]\ar[lu]
		\end{tikzcd}\]
		This shows that $x|_{\Spec F}$ is in the images of both $i_\lambda$ and $i_{\mu}$. It contradicts to the assumption that $\{Z_\lambda\}$ exhibits a fieldwise decomposition. This proves the implication (c)$\Rightarrow$(a).
		
		Finally, suppose that $\{Z_\lambda\}$ satisfies (a). We wish to show that $\{Z_\lambda\}$ satisfies (b). Fix a field $F$ over $k$. To see that the map
		$(i_\lambda):\coprod_{\lambda\in\Lambda}Z_\lambda(F)\to X(F)$
		is injective, it will suffice to show that the images of $Z_\lambda(F)$ in $X(F)$ are disjoint since $i_\lambda$ are monomorphisms. Let $\lambda,\mu\in\Lambda$ be distinct indices, $z_\lambda\in Z_\lambda(F)$, and $z_{\mu}\in Z_{\mu}(F)$. Let us wirte the corresponding elements in $|Z_\lambda|$ and $|Z_{\mu}|$ by the same symbols $z_\lambda$ and $z_{\mu}$ respectively. Suppose $i_\lambda(z_\lambda)=i_{\mu}(z_{\mu})$. Then, we have $|i_\lambda|(z_\lambda)=|i_{\mu}|(z_{\mu})$, which contradicts to the condition (a) since $\lambda\neq\mu$.
		
		The proof is completed by showing that
		$(i_\lambda):\coprod_{\lambda\in\Lambda}Z_\lambda(F)\to X(F)$
		is surjective. Let $x\in X(F)$. Then, the corresponding element $x\in |X|$ can be expressed as $|i_\lambda|(z_\lambda)$ for some index $\lambda$ and an element $z_\lambda\in |Z_\lambda|$. Let $Z_{\lambda,x}$ be the fiber of $i_\lambda$ at $x\in |X|$. Consider the commutative diagram
		\[\begin{tikzcd}
			&Z_{\lambda,x}\ar[r]\ar[d,"\sim"{sloped,below}]&Z_\lambda\ar[d, "i_\lambda"]\\
			\Spec F\ar[r]\ar[rr, bend right, "x"']&\Spec\kappa(x)\ar[r]&X.
		\end{tikzcd}\]
		The left vertical arrow in this diagram is an isomorphism by \cite[Remarque 8.11.5.1]{MR0217086} since $Z_{\lambda,x}$ is nonempty. Hence the morphism $i_\lambda$ sends the element of $Z_\lambda(F)$ given by $\Spec F\to\Spec\kappa(x)\cong Z_{\lambda,x}\to Z_\lambda$
		to $x$.
	\end{proof}
	\begin{cor}\label{cor:basechangeofdecomposition}
		Let $k'$ be a $k$-algebra. If $\{Z_\lambda\}$ exhibits a set-theoretic decomposition of $X$, $\{Z_\lambda\otimes_k k'\}$ exhibits a set-theoretic decomposition of $X\otimes_k k'$. The converse holds if $k'$ is a faithfully flat $k$-algebra.
	\end{cor}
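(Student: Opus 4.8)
The plan is to reduce both assertions to the fieldwise characterization of set theoretic decompositions recorded in Theorem~\ref{thm:fieldwisedec}, so that one never manipulates underlying spaces of base changes directly. For the first assertion, note that each $Z_\lambda\otimes_k k'\to X\otimes_k k'$ is again a monomorphism, being a base change of one, so Theorem~\ref{thm:fieldwisedec} is applicable with ground ring $k'$; it thus suffices to verify condition (b) over $k'$. Let $F'$ be a field over $k'$. Then $F'$ is in particular a field over $k$ via $k\to k'\to F'$, and the adjunction defining $-\otimes_k k'$ identifies $(Z_\lambda\otimes_k k')(F')$ with $Z_\lambda(F')$ and $(X\otimes_k k')(F')$ with $X(F')$, compatibly with the canonical maps. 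Hence $\coprod_\lambda(Z_\lambda\otimes_k k')(F')\to (X\otimes_k k')(F')$ is identified with $\coprod_\lambda Z_\lambda(F')\to X(F')$, which is a bijection because $\{Z_\lambda\}$ exhibits a set theoretic decomposition of $X$ and $F'$ is a field over $k$ (Theorem~\ref{thm:fieldwisedec}, (a)$\Rightarrow$(b)).

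For the converse, suppose $k'$ is faithfully flat over $k$ and $\{Z_\lambda\otimes_k k'\}$ exhibits a set theoretic decomposition of $X\otimes_k k'$; by Theorem~\ref{thm:fieldwisedec} it is enough to check condition (c) over $k$. Fix an algebraically closed field $F$ over $k$. Faithful flatness gives $F\otimes_k k'\neq 0$, so choosing a maximal ideal and passing to the residue field produces a field $L$ with a $k$-algebra map $F\to L$ (automatically injective) and a compatible $k'$-algebra structure. For injectivity of $\coprod_\lambda Z_\lambda(F)\to X(F)$: the $i_\lambda$ are monomorphisms, so it suffices to show the images of $Z_\lambda(F)$ and $Z_\mu(F)$ in $X(F)$ are disjoint when $\lambda\neq\mu$; but if $z\in Z_\lambda(F)$ and $z'\in Z_\mu(F)$ had a common image $x$, then their images in $(Z_\lambda\otimes_k k')(L)=Z_\lambda(L)$ and $(Z_\mu\otimes_k k')(L)=Z_\mu(L)$ would have a common image in $(X\otimes_k k')(L)$, contradicting condition (b) over $k'$ for the field $L$. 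For surjectivity, take $x\in X(F)$ and let $x_L\in X(L)=(X\otimes_k k')(L)$ be its image; by condition (b) over $k'$ there are an index $\lambda$ and $z_L\in (Z_\lambda\otimes_k k')(L)=Z_\lambda(L)$ with $i_\lambda(z_L)=x_L$. Form $Z_\lambda\times_X\Spec F$ via the morphism $x\colon\Spec F\to X$; it acquires an $L$-point from $z_L$ and $\Spec L\to\Spec F$, hence is nonempty, and its structure morphism to $\Spec F$ is a monomorphism. By \cite{MR0217086} Remarque 8.11.5.1 it is therefore isomorphic to $\Spec F$, and composing the resulting section with the projection to $Z_\lambda$ gives $z\in Z_\lambda(F)$ with $i_\lambda(z)=x$.

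The one step requiring genuine input is the last: extracting an honest $F$-point of $Z_\lambda$ above $x$ from the mere existence of an $L$-point. This is precisely the nonempty-monomorphism-to-a-field argument already used in the proof of Theorem~\ref{thm:fieldwisedec}, so it introduces nothing new; the remainder is formal bookkeeping with functors of points, base change adjunctions, and the equivalence of conditions (a)--(c), with faithful flatness entering only to guarantee $F\otimes_k k'\neq 0$ (equivalently, that $\Spec k'\to\Spec k$ is surjective).
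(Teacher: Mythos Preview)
Your proof is correct. The forward direction and the injectivity half of the converse match the paper's argument essentially verbatim (the paper even phrases the forward direction as ``clear from the definition of base changes in terms of copresheaves'', which is exactly your adjunction remark). For surjectivity in the converse, however, you take a genuinely different route: the paper passes to an \emph{algebraically closed} field $F'$ over $k'\otimes_k F$, finds a preimage $z'_\lambda\in Z_\lambda(F')$ of $x|_{F'}$, and then descends $z'_\lambda$ to $Z_\lambda(F)$ by explicitly verifying the fpqc cocycle condition along $F\to F'$ (using that $i_\lambda$ is monic and that $Z_\lambda$, being a scheme, is an fpqc sheaf). You instead recycle the nonempty-monomorphism-to-$\Spec F$ trick already used in the proof of Theorem~\ref{thm:fieldwisedec}: the fiber $Z_\lambda\times_X\Spec F$ is a monomorphism to $\Spec F$ which acquires an $L$-point, hence is $\Spec F$ itself. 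Your argument is a bit more economical---it needs only a residue field $L$ rather than an algebraic closure, does not invoke the fpqc sheaf property, and avoids the cocycle computation---while the paper's descent argument makes more visible how the $F$-point of $Z_\lambda$ is produced from the $F'$-point.
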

	\begin{proof}
		The first part is clear from the definition of base changes in terms of copresheaves. Let $k'$ be a faithfully flat $k$-algebra. Let $F$ be an algebraically closed field over $k$. Then, $k'\otimes_k F$ is nonzero by the hypothesis on $k'$. Choose an algebraically closed field $F'$ over $k'\otimes_k F$. For example, it is given by $\overline{(k'\otimes_k F)/\fm}$ for some maximal ideal $\fm$ of $k'\otimes_k F$ which exists since $k'\otimes_k F\neq 0$. In particular, $F'$ is an algebraically closed field over $k'$ via the canonical homomorphism $k'\to k'\otimes_k F$. Since the embedding $F\to F'$ is a $k$-algebra homomorphism, we get a commutative diagram
		\[\begin{tikzcd}
			\coprod_{\lambda\in\Lambda} Z_\lambda(F)\ar[r, "(i_\lambda)"]\ar[d]& X(F)\ar[d]\\
			\coprod_{\lambda\in\Lambda} Z_\lambda(F')\ar[r, "(i_\lambda)"]\ar[d, equal]& X(F')\ar[d, equal]\\
			\coprod_{\lambda\in\Lambda} (Z_\lambda\otimes_k k')(F')\ar[r, "\sim" sloped]& (X\otimes_k k')(F'),
		\end{tikzcd}\]
		where the bottom arrow is a bijection since $\{Z_\lambda\otimes_k k'\}$ exhibits a fieldwise decomposition of $X\otimes_k k'$. The left upper vertical arrow is injective since the embedding $F\to F'$ is faithfully flat. Therefore, the upper horizontal arrow is injective.
		
		Suppose that we are given an element $x\in X(F)$. Then, there exist an index $\lambda\in\Lambda$ and $z_\lambda'\in Z_\lambda(F')$ such that $x|_{\Spec F'}=i_\lambda(z'_\lambda)$. Consider the canonical map $\iota_j:F'\to F'\otimes_F F'$ to the $j$th factor ($j\in\{1,2\}$).
		Since $x\in X(F)$, we have
		\[\begin{array}{cccc}
			i_\lambda(Z_\lambda(\iota_1)z_\lambda')
			&=X(\iota_1)(i_\lambda(z'_\lambda))
			&=X(\iota_1)(x|_{\Spec F'})
			&=X(\iota_2)(x|_{\Spec F'})\\
			&=X(\iota_2)(i_\lambda(z'_\lambda))
			&=i_\lambda(Z_\lambda(\iota_2)z_\lambda').
		\end{array}\]
		Since $i_\lambda$ is a monomorphism, this implies $Z_\lambda(\iota_1)z_\lambda'=Z_\lambda(\iota_2)z_\lambda'$. Since $Z_\lambda$ is a sheaf in the fpqc topology, there is a unique element $z_\lambda\in Z_\lambda(F)$ such that $z_\lambda|_{\Spec F'}=z_\lambda'$. Since the restriction $X(F)\to X(F')$ is injective by the faithfully flat descent, the equality $i_\lambda(z_\lambda)=x$ follows from
		$i_\lambda(z_\lambda)|_{\Spec F'}=i_\lambda(z_\lambda|_{\Spec F'})=i_\lambda(z_\lambda')=x|_{\Spec F'}$.
		This completes the proof.
	\end{proof}
	
	Let $K$ be a group scheme over $k$, $X$ be a scheme over $k$, equipped with an action of $K$. Let $\{i_\lambda:Z_\lambda\to X\}$ be a set-theoretic decomposition of $X$. Suppose that for each index $\lambda$, the following conditions are satisfied:
	\begin{enumerate}
		\renewcommand{\labelenumi}{(\roman{enumi})}
		\item The action of $K$ on $X$ restricts to $Z_\lambda$.
		\item Every geometric fiber of $Z_\lambda$ is nonempty and locally of finite type.
		\item For every algebraically closed field $F$ over $k$, $K(F)$ acts transitively on $Z_\lambda(F)$.
	\end{enumerate}
	Such a set-theoretic decomposition is minimal in the following sense:
	\begin{cor}
		For each $\lambda$, suppose that we are given a set-theoretic decomposition $\{Z'_{\lambda\mu}\hookrightarrow Z_\lambda\}$. Then, each set $\{Z'_{\lambda\mu}\hookrightarrow Z_\lambda\}$ is a singleton if the following conditions are satisfied for every pair $(\lambda,\mu)$:
		\begin{enumerate}
			\renewcommand{\labelenumi}{(\roman{enumi})}
			\item The action of $K$ on $X$ restricts to $Z'_{\lambda\mu}$.
			\item Every geometric fiber of $Z'_{\lambda\mu}$ is nonempty and locally of finite type.
		\end{enumerate}
	\end{cor}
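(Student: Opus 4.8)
The plan is to reduce everything to a statement about $K(F)$-sets for algebraically closed fields $F$ and to apply Theorem~\ref{thm:fieldwisedec} twice. Fix an index $\lambda$ and apply Theorem~\ref{thm:fieldwisedec} to the set theoretic decomposition $\{Z'_{\lambda\mu}\hookrightarrow Z_\lambda\}$, with $Z_\lambda$ playing the role of $X$: for every algebraically closed field $F$ over $k$ the canonical map
\[\coprod_\mu Z'_{\lambda\mu}(F)\to Z_\lambda(F)\]
is a bijection. Hence the subsets $Z'_{\lambda\mu}(F)\subseteq Z_\lambda(F)$ (the maps being injective because the $Z'_{\lambda\mu}\hookrightarrow Z_\lambda$ are monomorphisms) are pairwise disjoint and cover $Z_\lambda(F)$. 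Moreover, condition (i) for the $Z'_{\lambda\mu}$ says that the inclusion $Z'_{\lambda\mu}\hookrightarrow Z_\lambda$ is $K$-equivariant, so each $Z'_{\lambda\mu}(F)$ is a $K(F)$-stable subset of $Z_\lambda(F)$.

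Next I would produce $F$-rational points. We may tacitly assume $k\neq 0$, since otherwise every scheme in sight is empty. Fix an algebraically closed field $F$ over $k$: the structure homomorphism $k\to F$ factors as $k\to\kappa(s)\hookrightarrow F$ for some $s\in|\Spec k|$, and $F$ contains a copy of the algebraic closure $\overline{\kappa(s)}$. Then $Z_\lambda\otimes_k F$ is the base change along the field extension $\overline{\kappa(s)}\hookrightarrow F$ of the geometric fiber $Z_\lambda\otimes_k\overline{\kappa(s)}$, which by condition (ii) for $Z_\lambda$ is nonempty and locally of finite type; so $Z_\lambda\otimes_k F$ is nonempty and locally of finite type over $F$, hence has an $F$-point by the Nullstellensatz, i.e.\ $Z_\lambda(F)\neq\emptyset$. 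The same argument, applied with condition (ii) for the $Z'_{\lambda\mu}$, shows $Z'_{\lambda\mu}(F)\neq\emptyset$ for every index $\mu$; and the index set is nonempty because $Z_\lambda(F)=\coprod_\mu Z'_{\lambda\mu}(F)$ is nonempty. Finally, condition (iii) for $Z_\lambda$ says that $K(F)$ acts transitively on $Z_\lambda(F)$. Choosing $z\in Z'_{\lambda\mu_0}(F)$ for some $\mu_0$, we get $Z_\lambda(F)=K(F)\cdot z\subseteq Z'_{\lambda\mu_0}(F)\subseteq Z_\lambda(F)$, so $Z'_{\lambda\mu_0}(F)=Z_\lambda(F)$; this forces $Z'_{\lambda\mu}(F)=\emptyset$ for all $\mu\neq\mu_0$. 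Since every $Z'_{\lambda\mu}(F)$ is nonempty, $\mu_0$ is the only index, i.e.\ $\{Z'_{\lambda\mu}\hookrightarrow Z_\lambda\}$ is a singleton.

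The argument becomes essentially formal once Theorem~\ref{thm:fieldwisedec} is in hand: a single $K(F)$-orbit cannot be partitioned into more than one nonempty $K(F)$-stable piece. The only step that needs genuine care is the passage from nonemptiness of the geometric fibers to nonemptiness of the sets of $F$-rational points, which is exactly where ``locally of finite type'' in condition (ii) is used, via the Nullstellensatz over $\overline{\kappa(s)}$ and over $F$; alternatively one could state conditions (ii) and (iii) directly in terms of $F$-points for algebraically closed $F$ and skip this point.
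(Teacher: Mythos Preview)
Your proof is correct and follows the same approach as the paper, which records it as ``an immediate consequence of Hilbert's Nullstellensatz.'' You have simply unpacked that one-line proof: the Nullstellensatz is used exactly where you use it, to pass from nonempty locally-of-finite-type geometric fibers to nonempty sets of $F$-points, after which the transitivity of the $K(F)$-action forces the partition $\{Z'_{\lambda\mu}(F)\}_\mu$ of $Z_\lambda(F)$ to consist of a single piece.
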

	\begin{proof}
		This is an immediate consequence of Hilbert's Nullstellensatz.
	\end{proof}
	\section{Construction of $\bZ\left[1/2\right]$-Forms of $\SO(3,\bC)$-Orbits}\label{sec:orbit}
	We constructed a $\bZ\left[1/2\right]$-form of the closed $\SO(3,\bC)$-orbit in $\cB_{\SL_3}(\bC)$ in \cite{hayashijanuszewski} by the Galois descent. In this section, we construct $\bZ\left[1/2\right]$-forms of the remaining three $\SO(3,\bC)$-orbits. We also give their moduli descriptions. To achieve them, remark that if we are given a Borel subgroup of $\SL_3$ over a $\bZ\left[1/2\right]$-algebra $R$, the stabilizer of the action of $\SO(3)$ at $B\in\cB_{\SL_3}(R)$ is $\SO(3)\cap B$ since the normalizer of $B$ coincides with itself (\cite[Corollaire 5.3.12 and Proposition 5.1.3]{MR0218363}).
	
	\subsection{Preliminary Computation}\label{sec:preliminary}
	Let $R$ be a commutative ring. Let $\{e_1,e_2,e_3\}$ denote the standard basis of $R^3$. We define an $R$-bilinear form $(-,-):R^3\otimes_R R^3\to R$ by
	$(\sum_{i=1}^3 a_i e_i,\sum_{i=1}^3 b_i e_i)=\sum_{i=1}^3 a_i b_i$.
	
	Let $g\in\SL_3(R)$. For $i\in \{1,2,3\}$, write
	$v_i(g)=(g_{1i}~g_{2i}~g_{3i})^T$.
	Set
	\[c_1(g)=(v_1(g),v_1(g))\]
	\[c_2(g)=(v_1(g),v_1(g))(v_2(g),v_2(g))-(v_1(g),v_2(g))^2.\]
	\[c_3(g)=(v_1(g),v_2(g)).\]
	We will omit $(g)$ if the matrix $g$ is clear from the context.
	\begin{lem}\label{lem:kgbreduction}
		Let $g\in\SL_3(R)$, $b\in B_{\std}(R)$, and $k\in\SO(3,R)$.
		\begin{enumerate}
			\renewcommand{\labelenumi}{(\arabic{enumi})}
			\item For $i\in \{1,2\}$, $c_i(g)\in R^\times$ (resp.\ $c_i(g)=0$) if and only if $c_i(kgb)\in R^\times$ (resp.\ $c_i(kgb)=0$).
			\item Suppose that $c_1(g)=0$. We then have $c_3(g)\in R^\times$ (resp.\ $c_3(g)=0$) if and only if $c_3(kgb)\in R^\times$ (resp.\ $c_3(kgb)=0$).
		\end{enumerate}
	\end{lem}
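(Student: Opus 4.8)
The plan is to compute directly how the three functions $c_1,c_2,c_3$ transform under left multiplication by $k\in\SO(3,R)$ and under right multiplication by $b\in B_{\std}(R)$, treating the two sides separately and then combining. The left factor is immediate: since $k$ preserves the bilinear form $(-,-)$ we have $(kv,kw)=(v,w)$ for all $v,w\in R^3$, and $v_i(kg)=k\,v_i(g)$, so $c_j(kg)=c_j(g)$ for every $j\in\{1,2,3\}$, with no correction term at all. Consequently it suffices to prove both statements with $kgb$ replaced by $(kg)b$ and then $kg$ renamed $g$; note that the hypothesis $c_1(g)=0$ of part (2) passes along to $c_1(kg)=0$.

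It remains to analyse the right action of $B_{\std}$. Write $b=(b_{ij})\in B_{\std}(R)$; being an invertible upper triangular matrix, it has unit diagonal entries $b_{11},b_{22},b_{33}\in R^\times$. The first two columns of $gb$ are $v_1(gb)=b_{11}v_1(g)$ and $v_2(gb)=b_{12}v_1(g)+b_{22}v_2(g)$, and substituting these into the definitions gives
\begin{align*}
c_1(gb)&=b_{11}^{2}\,c_1(g),\\
c_2(gb)&=(b_{11}b_{22})^{2}\,c_2(g),\\
c_3(gb)&=b_{11}b_{12}\,c_1(g)+b_{11}b_{22}\,c_3(g).
\end{align*}
The first and third identities are direct from bilinearity; the second follows because the $3\times2$ matrix with columns $v_1(gb),v_2(gb)$ equals the one for $g$ right-multiplied by the upper triangular matrix $\left(\begin{smallmatrix}b_{11}&b_{12}\\0&b_{22}\end{smallmatrix}\right)$, so the associated $2\times2$ Gram determinant is multiplied by the square of that matrix's determinant, namely $(b_{11}b_{22})^2$.

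Finally I would read off the conclusions. As $b_{11}^2$ and $(b_{11}b_{22})^2$ are units, for $i\in\{1,2\}$ the element $c_i(gb)$ lies in $R^\times$ (resp.\ equals $0$) exactly when $c_i(g)$ does, which together with the first paragraph gives part (1). For part (2), when $c_1(g)=0$ we also have $c_1(kg)=0$, so the third identity applied to $kg$ collapses to $c_3(kgb)=b_{11}b_{22}\,c_3(g)$ with $b_{11}b_{22}\in R^\times$, and the claimed equivalence follows. I do not expect a genuine obstacle: the whole argument is elementary manipulation of the standard bilinear form, and the only points that need a moment's care are recording that the diagonal entries of $b$ are units and the small congruence identity $\det(\beta^{\mathsf{T}}G\beta)=(\det\beta)^2\det G$ for $2\times2$ matrices underlying the formula for $c_2$.
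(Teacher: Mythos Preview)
Your proof is correct and follows essentially the same approach as the paper: reduce to the $B_{\std}$-action using $\SO(3)$-invariance of the form, compute the columns of $gb$, and read off that $c_1,c_2$ scale by the units $b_{11}^2,(b_{11}b_{22})^2$ while $c_3$ scales by $b_{11}b_{22}$ once $c_1=0$. The only cosmetic differences are that you record the full formula $c_3(gb)=b_{11}b_{12}c_1(g)+b_{11}b_{22}c_3(g)$ before specializing, and justify the $c_2$ transformation via the Gram-determinant identity rather than direct expansion.
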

	\begin{proof}
		We remark that $kv_i(g)=v_i(kg)$ for $i\in\{1,2,3\}$. Since $k$ respects the bilinear form $(-,-)$ on $R^3$, we have $c_1(kgb)=c_1(gb)$, $c_2(kgb)=c_2(gb)$, and $c_3(kgb)=c_3(gb)$. By definitions, we have
		\[\begin{array}{cc}
			v_1(gb)=b_{11}v_1(g),
			&v_2(gb)=b_{12}v_1(g)+b_{22}v_2(g),\\
			c_1(gb)=b_{11}^2c_1(g),
			&c_2(gb)=b_{11}^2b_{22}^2c_2(g).
		\end{array}\]
		If $c_1(g)=0$, we also have $c_3(gb)=b_{11}b_{22}c_3(g)$.
		We remark that $b_{11},b_{22}\in R^\times$ since $b$ belongs to $B_{\std}(R)$. The equivalences are now obvious.
	\end{proof}
	\subsection{Open Orbit $U$}\label{sec:oporbit}
	Let $U=\SO(3)/\SO(3)\cap B_{\std}$ be the $\SO(3)$-orbit sheaf over $\bZ\left[1/2\right]$ attached to $B_{\std}\in\cB_{\SL_3}(\bZ\left[1/2\right])$, and $i_{\op}:U\hookrightarrow \cB_{\SL_3}$ be the corresponding embedding.
	\begin{lem}\label{lem:openorbit1}
		The group scheme $\SO(3)\cap B_{\std}$ is a finite \'etale diagonalizable group scheme.
	\end{lem}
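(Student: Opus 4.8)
The plan is to compute the functor of points of $\SO(3)\cap B_{\std}$ explicitly. Recall that this group scheme is the stabilizer of $B_{\std}\in\cB_{\SL_3}(\bZ\left[1/2\right])$, so that for every $\bZ\left[1/2\right]$-algebra $R$ one has $(\SO(3)\cap B_{\std})(R)=\SO(3)(R)\cap B_{\std}(R)$ inside $\SL_3(R)$. Fix $g$ in this intersection. Because $g\in B_{\std}(R)$, its columns are $v_1(g)=g_{11}e_1$, $v_2(g)=g_{12}e_1+g_{22}e_2$, $v_3(g)=g_{13}e_1+g_{23}e_2+g_{33}e_3$ with $g_{11},g_{22},g_{33}\in R^\times$ and $g_{11}g_{22}g_{33}=1$; and because $g\in\SO(3)(R)$, one has $(v_i(g),v_j(g))=\delta_{ij}$ for all $i,j$. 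Reading off these relations in the order $(v_1,v_1),(v_1,v_2),(v_1,v_3),(v_2,v_2),(v_2,v_3),(v_3,v_3)$ and using that $g_{11},g_{22}$ are units forces $g_{12}=g_{13}=g_{23}=0$ and $g_{ii}^2=1$ for $i\in\{1,2,3\}$. Conversely, every diagonal matrix $\diag(d_1,d_2,d_3)$ with $d_i^2=1$ and $d_1d_2d_3=1$ lies in $\SO(3)(R)\cap B_{\std}(R)$. Hence, writing $\mu_2$ for the group scheme of square roots of unity, the functor $R\mapsto(\SO(3)\cap B_{\std})(R)$ is identified with $R\mapsto\{(d_1,d_2,d_3)\in\mu_2(R)^3:d_1d_2d_3=1\}$.

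Next I would exhibit an isomorphism of group schemes $\SO(3)\cap B_{\std}\xrightarrow{\sim}\mu_2\times\mu_2$ over $\bZ\left[1/2\right]$, given on $R$-points by $(d_1,d_2,d_3)\mapsto(d_1,d_2)$; the inverse $(d_1,d_2)\mapsto(d_1,d_2,d_1d_2)$ is well defined because $d_1^2=d_2^2=1$ forces $(d_1d_2)^2=1$ and $d_1d_2\cdot d_1d_2=1$. Both maps are natural in $R$ and compatible with the group laws, so the assertion reduces to the fact that $\mu_2\times\mu_2$ is a finite \'etale diagonalizable group scheme over $\bZ\left[1/2\right]$.

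For the last point, $\mu_2$ is diagonalizable, being the diagonalizable group scheme associated with the finite abelian group $\bZ/2\bZ$, and it is finite \'etale over $\bZ\left[1/2\right]$ because $2$ is invertible there, so that $\bZ\left[1/2\right][t]/(t^2-1)\cong\bZ\left[1/2\right]\times\bZ\left[1/2\right]$ is a finite \'etale $\bZ\left[1/2\right]$-algebra. Both properties are stable under finite products (for instance $\mu_2\times\mu_2$ is associated with $(\bZ/2\bZ)^2$), so $\SO(3)\cap B_{\std}$ is finite \'etale and diagonalizable, as claimed. The only genuine computation is the short verification in the first step, and I do not foresee any real obstacle; the one thing to keep track of is that the diagonal entries of $g$ are units, which is precisely what makes the off-diagonal entries vanish.
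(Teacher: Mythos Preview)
Your proposal is correct and follows essentially the same approach as the paper: both compute the functor of points directly, use the orthogonality relations from $g^Tg=1$ together with invertibility of the diagonal entries to force $g$ to be diagonal with $g_{ii}^2=1$, and then identify the result with $\mu_2\times\mu_2\cong\Spec\bZ[1/2,b_1,b_2]/(b_1^2-1,b_2^2-1)$. Your write-up is slightly more explicit than the paper in spelling out why $\mu_2\times\mu_2$ is finite \'etale and diagonalizable over $\bZ[1/2]$, which the paper leaves implicit.
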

	\begin{proof}
		Consider the embedding $\Spec\bZ\left[1/2,b_1,b_2\right]/(b_1^2-1,b_2^2-1)\hookrightarrow \SL_3$ over $\bZ\left[1/2\right]$ defined by $(b_1,b_2)\mapsto\diag(b_1,b_2,b_1b_2)$.
		It clearly factors through $B_{\std}\cap \SO(3)$. We prove that it is an isomorphism onto $B_{\std}\cap \SO(3)$. Let $R$ be an arbitrary $\bZ\left[1/2\right]$-algebra, and $b=(b_{ij})\in B_{\std}(R)\cap\SO(3,R)$.
		Since $b\in B_{\std}(R)\subset\SL_3(R)$, we have $\det b=b_{11}b_{22}b_{33}=1$. In particular, the diagonal entries of $b$ are units of $R$. For $i\in\{2,3\}$ (resp.\ $i\in\{1,2\}$), we have $b_{1i}=0$ (resp.\ $b_{i3}=0$) since the $(i,1)$-entry of $b^Tb$ (resp.\ the $(i,3)$-entry of $bb^T$) is $b_{11}b_{1i}$ (resp.\ $b_{i3}b_{33}$). Therefore, $b$ is diagonal. Since $b^Tb=1$, we have
		$b_{11}^2=b_{22}^2=b_{33}^2=1$.
		Since $\det b=1$, we have $b_{33}=b_{11}b_{22}$. Hence $b$ is the image of $(b_{11},b_{22})$. This completes the proof.
	\end{proof}
	We next show that $U$ is the locus in $\cB_{\SL_3}$ where we can apply the Gram-Schmidt process in order to prove that $i_{\op}$ is an affine open immersion. In other words, $U$ can be identified with the moduli scheme of flags where the Gram-Schmidt process works.
	\begin{property}
		Let $F$ be an algebraically closed field of characteristic different from 2. We say that a full flag $V=(0\subset Fv_1\subset V_2\subset F^3)$ satisfies Property (O) if $(v_1,v_1)\neq 0$, and every nonzero element $v\in V_2$ satisfies either $(v_1,v)\neq 0$ or $(v,v)\neq 0$. We remark that a nonzero vector $v\in V$ such that $(v_1,v)=0$ is unique up to nonzero scalar since $\dim V_2=2$.
	\end{property}
	\begin{thm}\label{thm:openorbit}
		\begin{enumerate}
			\renewcommand{\labelenumi}{(\arabic{enumi})}
			\item Let $R$ be an arbitrary $\bZ\left[1/2\right]$-algebra. For a Borel subgroup $B\in\cB_{\SL_3}(R)$, the following conditions are equivalent:
			\begin{enumerate}
				\item[(a)] $B$ belongs to the image of $i_{\op}$.
				\item[(b)] The flags corresponding to all geometric fibers of $B$ satisfy Property (O).
			\end{enumerate}
			\item The sheaf $U$ is represented by an affine $\bZ\left[1/2\right]$-scheme.
			\item The morphism $i_{\op}$ is an affine open immersion.
		\end{enumerate}
	\end{thm}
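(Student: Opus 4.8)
The plan is to identify the orbit sheaf $U$ with the open subscheme of $\cB_{\SL_3}$ on which $c_1$ and $c_2$ are invertible, to recognize that this open locus is exactly where the Gram--Schmidt process applies, and to get affineness of $U$ from its presentation as a quotient by a finite group scheme. First I would descend the relevant open locus. The functions $c_1,c_2$ are regular functions on $\SL_3$, and by Lemma~\ref{lem:kgbreduction}(1) the distinguished affine open $D(c_1c_2)\subseteq\SL_3$ is stable under right translation by $B_{\std}$. The orbit map $\pi\colon\SL_3\to\cB_{\SL_3}$, $g\mapsto gB_{\std}g^{-1}$, identifies $\cB_{\SL_3}$ with $\SL_3/B_{\std}$ and is smooth, surjective and open; since $D(c_1c_2)$, being right-$B_{\std}$-stable, is a union of fibres of $\pi$, its image $W:=\pi(D(c_1c_2))$ is an open subscheme of $\cB_{\SL_3}$ with $\pi^{-1}(W)=D(c_1c_2)$.

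Next I would describe $|W|$ in terms of Property (O). Let $F$ be an algebraically closed field over $\bZ[1/2]$ and $B\in\cB_{\SL_3}(F)$ with associated flag $0\subset Fv_1\subset V_2\subset F^3$. Choosing $g\in\SL_3(F)$ whose first column spans $Fv_1$ and whose first two columns span $V_2$, Lemma~\ref{lem:kgbreduction}(1) shows that membership $B\in W(F)$, i.e.\ $c_1(g),c_2(g)\in F^\times$, is independent of the choice of $g$. A short computation identifies this with Property (O): rescaling $v_1(g)$ gives $c_1(g)\neq 0\iff(v_1,v_1)\neq 0$, and when $c_1(g)\neq 0$ the vector $-c_3(g)v_1(g)+c_1(g)v_2(g)$ represents, up to a nonzero scalar, the unique nonzero element $v\in V_2$ with $(v_1,v)=0$, and one computes $(v,v)=c_1(g)c_2(g)$; so the second clause of Property (O) amounts to $c_2(g)\neq 0$. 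Hence $|W|$ consists precisely of the Borel subgroups whose flag satisfies Property (O).

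Then I would prove $W=U$ by Gram--Schmidt. For $k\in\SO(3,R)$ the columns of $k$ are orthonormal, so $c_1(k)=c_2(k)=1$ and $kB_{\std}k^{-1}\in W(R)$; since $U$ is the \'etale sheafification of $R\mapsto\SO(3,R)/(\SO(3)\cap B_{\std})(R)$ and $W$ is a subsheaf of $\cB_{\SL_3}$, this gives $U\subseteq W$. Conversely, a section of $W$ over a $\bZ[1/2]$-algebra $R$ lifts, after an \'etale cover $R\to R'$ (using that $\pi$ is smooth surjective), to some $g\in D(c_1c_2)(R')\subseteq\SL_3(R')$; over the finite \'etale faithfully flat cover $R''=R'[\sqrt{c_1(g)},\sqrt{c_2(g)/c_1(g)}]$, \'etale because $2,c_1(g),c_2(g)\in(R')^\times$, the Gram--Schmidt vectors
\[v_1'=c_1(g)^{-1/2}v_1(g),\quad v_2'=\bigl(c_2(g)/c_1(g)\bigr)^{-1/2}\Bigl(v_2(g)-\tfrac{c_3(g)}{c_1(g)}v_1(g)\Bigr),\quad v_3'=v_1'\times v_2'\]
form an orthonormal basis of $(R'')^3$ of determinant $1$, so the matrix $k$ with columns $v_1',v_2',v_3'$ lies in $\SO(3,R'')$ and satisfies $k=gb$ with $b\in B_{\std}(R'')$ (upper triangular of determinant $1$ by construction). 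Thus the section, restricted to $R''$, equals $kB_{\std}k^{-1}\in U(R'')$, and by \'etale descent it lies in $U(R)$. Therefore $W=U$ as subsheaves of $\cB_{\SL_3}$; in particular $U$ is represented by the open subscheme $W$, which gives the open-immersion assertion in (3), and, since factoring through an open subscheme is detected on underlying topological spaces, the previous paragraph yields (1).

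Finally, for affineness: by Lemma~\ref{lem:openorbit1}, $\SO(3)\cap B_{\std}$ is finite locally free over $\bZ[1/2]$ and acts freely by right translation on the affine scheme $\SO(3)$, so the quotient $U=\SO(3)/(\SO(3)\cap B_{\std})$ is represented by an affine scheme; this is (2). Since $\cB_{\SL_3}$ is separated over $\bZ[1/2]$ and $U$ is affine, the morphism $i_{\op}$ is affine, and together with the open immersion property this proves (3). I expect the main obstacle to be the Gram--Schmidt descent of the third paragraph: one must verify that the construction genuinely works \'etale-locally over an arbitrary $\bZ[1/2]$-algebra---this is the only place the invertibility of $2$ enters, via extracting square roots by an \'etale extension---and carry out the sheaf-theoretic bookkeeping needed to conclude $W=U$; the descent of the open $D(c_1c_2)$ along $\pi$ in the first paragraph also requires some care.
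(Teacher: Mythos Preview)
Your proof is correct and follows essentially the same strategy as the paper: identify $U$ with the locus where $c_1,c_2$ are units via Gram--Schmidt after an \'etale square-root extension, and obtain affineness of $U$ from the quotient by the finite diagonalizable group $\SO(3)\cap B_{\std}$. The paper organizes the argument slightly differently---proving (1) first and deducing (2) and (3) from it, and computing the fibre $U\times_{\cB_{\SL_3}}\Spec R\cong\Spec R_{c_1c_2}$ directly rather than invoking ``affine source over separated target is affine''---but the content is the same.
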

	\begin{proof}
		Part (2) follows from (1). In fact, $U$ is a sheaf in the fpqc topology by (1) since the condition (b) is local in the fpqc topology (use Lemma \ref{lem:ffreduction} if necessary). In particular, $U$ is the fpqc quotient of $\SO(3)$ by $\SO(3)\cap B_{\std}$. Part (2) then follows from Lemma \ref{lem:openorbit1} and \cite[Corollaire 5.6]{MR0212024} (or \cite[I.5.6 (6)]{MR2015057}).
		
		Let $R$ be a $\bZ\left[1/2\right]$-algebra, and $B\in\cB_{\SL_3}(R)$. Suppose that $B$ satisfies (a). Since (b) is local in the \'etale topology, we may pass to an \'etale cover to assume that there exists an element $k\in\SO(3,R)$ such that $B=kB_{\std}k^{-1}$ by \cite[I 5.4 (4), 5.5, 5.6 (2)]{MR2015057}. Since $B_{\std}$ satisfies (b), $B$ does so by $k\in\SO(3,R)$. Conversely, suppose that $B$ satisfies (b). Since (a) is local in the \'etale topology by Lemma \ref{lem:imisetalelocal}, we may assume that $B=gB_{\std}g^{-1}$ for some matrix $g\in\SL_3(R)$ by \cite[Proposition 5.1.3 and Corollaire 5.3.12]{MR0218363}.	Since $c_1=c_1(g)$ is nonzero at every (geometric) fiber, $c_1$ is a unit of $R$. Let us pass to the \'etale cover $\Spec R\left[\sqrt{c}_1\right]\to\Spec R$. Since $B=gbB_{\std}b^{-1}g^{-1}$ for every element $b\in B_{\std}(R)$, one can replace $g$ by
		\[g\left(\begin{array}{ccc}
			\frac{1}{\sqrt{c_1}}&-\frac{(v_1,v_2)}{c_1}&0\\
			0&1&0\\
			0&0&\sqrt{c_1}
		\end{array}\right)\]
		to assume that $(v_1,v_1)=1$ and $(v_1,v_2)=0$. Since $v_2$ is nonzero at each geometric fiber, (b) implies $c_2=(v_2,v_2)\in R^\times$. Pass to the \'etale cover $\Spec R\left[\sqrt{c}_2\right]$. Then, replace $g$ by $g\diag(1,\frac{1}{\sqrt{c_2}},\sqrt{c_2})$
		to assume $(v_2,v_2)=1$. Note that $(v_1,v_1)=1$ and $(v_1,v_2)=0$ still hold. Define a matrix $k$ by
		\[\begin{array}{cc}
			v_1(k)=v_1(g),&v_2(k)=v_2(g),
		\end{array}\]
		\[v_3(k)=v_3(g)-(v_1(g),v_3(g))v_1(g)-(v_2(g),v_3(g))v_2(g).\]
		Then, $v_1(k)$, $v_2(k)$, and $v_3(k)$ are orthogonal to each other. Since $\det$ is alternating multilinear, we have $\det k=\det g=1$. Therefore, we obtain $(v_3(k),v_3(k))=1$ from
		\[1=(\det k)^2=\det k^T k=\det ((v_i(k),v_j(k)))=(v_3(k),v_3(k)).\]
		As a consequence, $k$ belongs to $\SO(3,R)$. Since
		\[g=k
		\left(\begin{array}{ccc}
			1&0&(v_3(g),v_1(g))\\
			0&1&(v_3(g),v_2(g))\\
			0&0&1
		\end{array}\right),\]
		$B=kB_{\std}k^{-1}$ belongs to the image of $i_{\op}$. This shows (1).
		
		For (3), we show that for every test affine scheme $\Spec R$ over $\bZ\left[1/2\right]$ and a morphism $\Spec R\to \cB_{\SL_3}$, the base change $\Spec R\times_{\cB_{\SL_3}}U\to \Spec R$ is an affine open immersion. Let $B$ be the Borel subgroup corresponding to $\Spec R\to \cB_{\SL_3}$. Since the assertion is \'etale local in $\Spec R$, we may again assume $B=gB_{\std}g^{-1}$ for some $g\in\SL_3(R)$. For a ring homomorphism $f:R\to S$, the following conditions are equivalent:
		\begin{enumerate}
			\renewcommand{\labelenumi}{(\alph{enumi})}
			\item $f$ belongs to $(U\times_{\cB_{\SL_3}}\Spec R)(S)\subset(\Spec R)(S)$;
			\item $f(c_1)$ and $f(c_2)$ are nonzero in each residue field of $\Spec S$;
			\item $f(c_1)$ and $f(c_2)$ are units of $S$.
			\item The homomorphism $f$ descends to a map $R_{c_1c_2}\to S$.
		\end{enumerate}
		Therefore, $U\times_{\cB_{\SL_3}}\Spec R$ is isomorphic to $\Spec R_{c_1c_2}$. This completes the proof.
	\end{proof}
	
	\begin{rem}\label{rem:referee}
		The formula $c_1c_2$ appear more directly by the pull back of this open subscheme along the projection $\SL_3\to \SL_3/B_{\std}\cong\cB_{\SL_3}$. That is, the open subscheme of $\SL_3$ obtained by this base change is defined by $c_1c_2$. Similar results hold in the forms of the other orbits below.
	\end{rem}
	
	\subsection{Middle Subschemes $Z_1$ and $Z_2$}\label{sec:middleorbit}
	We next construct by Galois descent $\bZ\left[1/2\right]$-forms of the two orbits which are neither open or closed. Let $\Gamma=\bZ/2\bZ$, and $\sigma$ denote its nontrivial element. Recall that $\bZ\left[1/2\right]\subset\bZ\left[1/2,\sqrt{-1}\right]$ is a Galois extension of Galois group $\Gamma$ for the conjugation $\sqrt{-1}\mapsto -\sqrt{-1}$.
	
	Set
	\[g_1=\left(\begin{array}{ccc}
		1&-\sqrt{-1}&0\\
		-\sqrt{-1}&1&0\\
		0&0&1
	\end{array}\right),~g_2=\left(\begin{array}{ccc}
		1&0&0\\
		0&1&-\sqrt{-1}\\
		0&-\sqrt{-1}&1
	\end{array}\right)\in\GL_2(\bZ\left[1/2,\sqrt{-1}\right])\]
	\[\begin{array}{cc}
		B_j=g_jB_{\std}g_j^{-1}&(j\in\{1,2\}).
	\end{array}\]

	Write $A=\bZ\left[1/2,\sqrt{-1},x,y,z\right]/(x^2+y^2+z^2-1)$. Then, $\SO(3)$ acts on $\Spec A$ by the restriction of the canonical action of $\SO(3)$ on $\Spec\bZ\left[1/2,\sqrt{-1},x,y,z\right]$. That is, for a $\bZ\left[1/2,\sqrt{-1}\right]$-algebra $R$, each element $g\in\SO(3,R)$ acts on
	$(\Spec A)(R)$
	as a $3\times 3$ matrix via the identification
	$(\Spec A)(R)\cong\left\{(x~y~z)^T\in R^3:x^2+y^2+z^2=1\right\}\subset R^3$.
	\begin{prop}\label{prop:SO(3)/SO(2)=S^2}
		\begin{enumerate}
			\renewcommand{\labelenumi}{(\arabic{enumi})}
			\item We have
			\[\begin{array}{cc}
				B_1\cap \SO(3)=\diag(\SO(2),1),&B_2\cap \SO(3)=\diag(1,\SO(2)).
			\end{array}\]
			\item Let $j\in\{1,2\}$. We have an $\SO(3)$-equivariant isomorphism
			\[\SO(3)/B_j\cap \SO(3)\cong \Spec A.\]
			Define $\SO(3)$-equivariant monomorphisms $i_j$ as
			\[i_j:\Spec A\cong\SO(3)/B_j\cap \SO(3)\hookrightarrow \cB_{\SL_3}.\]
			\item Define an action of $\Gamma$ on $A$ by
			\[\begin{array}{cccc}
				\sigma(\sqrt{-1})=-\sqrt{-1},&\sigma(x)=-x,&\sigma(y)=-y,&\sigma(z)=-z.
			\end{array}\]
			Then, the elements of $\cB_{\SL_3}(A)$ corresponding to $i_1$ and $i_2$ are $\Gamma$-invariant.
		\end{enumerate}
	\end{prop}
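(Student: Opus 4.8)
The plan is to handle the three parts in sequence, treating the case of $i_1$ (the $B_1$-side) in detail and noting that the case of $i_2$ is entirely parallel after swapping the roles of the coordinate blocks. For part (1), I would compute $B_1\cap\SO(3)$ directly. Since $B_1=g_1B_{\std}g_1^{-1}$, an element $h\in\SO(3,R)$ lies in $B_1$ iff $g_1^{-1}hg_1\in B_{\std}(R)$; but the condition ``$h$ preserves the standard bilinear form'' must also be imposed, and since $g_1$ only mixes the first two coordinates and fixes $e_3$, the flag $B_1$ corresponds to is $(0\subset R(e_1-\sqrt{-1}e_2)\subset R(e_1-\sqrt{-1}e_2)+Re_3\subset R^3)$ — wait, more carefully, $B_1$ stabilizes the flag $g_1\cdot(\text{standard flag})$. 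The cleanest route is: $h\in B_1\cap\SO(3)$ iff $h$ is orthogonal and stabilizes the line $g_1e_1$ and the plane $g_1\langle e_1,e_2\rangle=\langle e_1,e_2\rangle$. Orthogonal matrices stabilizing the plane $\langle e_1,e_2\rangle$ have block-diagonal form $\mathrm{diag}(A,\pm1)$ with $A\in\Oo(2)$; the determinant-one and line-stabilizing conditions then pin it down to $\mathrm{diag}(\SO(2),1)$, after checking that every element of $\SO(2)$ fixes the isotropic line $R(e_1-\sqrt{-1}e_2)$ — which it does, since $\SO(2)$ acts on the isotropic lines of the split plane by scaling. This is a short explicit verification over an arbitrary $\bZ[1/2,\sqrt{-1}]$-algebra $R$, parallel to the argument in Lemma \ref{lem:openorbit1}.

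For part (2), once part (1) identifies the stabilizers as $\mathrm{diag}(\SO(2),1)$ and $\mathrm{diag}(1,\SO(2))$, I need the $\SO(3)$-equivariant isomorphism $\SO(3)/\SO(2)\cong\Spec A$, where $A$ is the coordinate ring of the unit sphere $S^2$. The map $\SO(3)\to\Spec A$ is $g\mapsto g\cdot e_3$ (resp.\ $g\mapsto g\cdot e_1$) — the third (resp.\ first) column of $g$, which indeed satisfies $x^2+y^2+z^2=1$ because columns of an orthogonal matrix are unit vectors. That this is an $\SO(3)$-equivariant isomorphism onto the étale quotient is a standard fact (the action of $\SO(3)$ on $S^2$ over $\bZ[1/2]$ is transitive in the étale topology with stabilizer $\SO(2)$; cf.\ the formalism of homogeneous spaces of \cite{MR0237513}), but I should either cite it or give the quick argument: étale-locally any unit vector is in the $\SO(3)$-orbit of $e_3$ by a Gram–Schmidt-type completion to an orthonormal basis of determinant one, exactly as in the proof of Theorem \ref{thm:openorbit}. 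The monomorphisms $i_1,i_2$ are then just the compositions with $\SO(3)/(B_j\cap\SO(3))\hookrightarrow\cB_{\SL_3}$, $g(B_j\cap\SO(3))\mapsto gB_jg^{-1}$.

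Part (3) is the crux and the step I expect to be the main obstacle, since it is where the key idea of the paper (the role of $w_0$ as Galois action) enters. I need to show that the $A$-point $i_1\in\cB_{\SL_3}(A)$ is $\Gamma$-invariant, i.e.\ that $\sigma(i_1)=i_1$ where $\sigma$ acts on $\cB_{\SL_3}(A)$ through its action on $A$. Unwinding, $i_1$ is the Borel subgroup $g\cdot g_1 B_{\std} g_1^{-1}\cdot g^{-1}$ where $g$ denotes the ``universal'' orthogonal matrix with first... rather, the relevant $A$-point of $\SO(3)$; concretely $i_1$ corresponds to a Borel whose associated flag is built from the universal isotropic vector $g_1 e_1$ transported by the tautological sphere point. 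Applying $\sigma$ negates $\sqrt{-1}$, $x$, $y$, $z$; I must exhibit an element $w\in\SO(3,\bZ[1/2])$ — this will be $w_0=\mathrm{diag}(1,-1,-1)$ or a conjugate, as flagged in Example \ref{ex:sl_3} — such that conjugation by $w$ implements $\sigma$ on the Borel, i.e.\ $w\cdot(\sigma\text{-transform of the flag})=(\text{original flag})$ as an $A$-point of $\cB_{\SL_3}$. The computation splits into two effects: $\sigma$ on $\sqrt{-1}$ replaces $g_1$ by its conjugate $\bar g_1$ (which switches the isotropic line $e_1-\sqrt{-1}e_2$ to $e_1+\sqrt{-1}e_2$), and this switch is realized by conjugation by $\mathrm{diag}(1,-1,1)\in\Oo(2)\times\{1\}$; simultaneously $\sigma$ on $(x,y,z)$ negates the sphere point, which is realized by $-1\in\Oo(3)$, i.e.\ by the antipodal map. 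The product of these two orthogonal transformations must land in $\SO(3,\bZ[1/2])$ and must act trivially on the flag up to the residual $B_j\cap\SO(3)$-ambiguity — and this is exactly the assertion ``$w_0\neq -1$ but $w_0$ agrees with $-1$ on the relevant torus'' from Example \ref{ex:sl_3}. So the real work is a careful bookkeeping: write out $i_1$ as an explicit $A$-point (a $3\times3$ matrix over $A$, well-defined up to right multiplication by $B_1\cap\SO(3)$), apply $\sigma$ entrywise, and produce the conjugating element in $\SO(3,\bZ[1/2])$ together with the compensating element of $(B_1\cap\SO(3))(A)$; the existence of the latter is what makes invariance hold on the nose rather than merely up to the $\SO(2)$-stabilizer. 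I would organize this as: (i) fix the explicit matrix representative, (ii) compute $\sigma$ of it, (iii) check $w_0\cdot\sigma(\text{rep})\cdot(\text{correction})$ equals the rep, with the correction an explicit element of $\mathrm{diag}(\SO(2),1)(A)$ built from $x,y,z$. The $i_2$ case is identical with the blocks permuted.
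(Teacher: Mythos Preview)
Your approaches to parts (1) and (2) are sound and close to the paper's. For (1) you argue geometrically via the flag stabilized by $B_1$, while the paper conjugates by $g_1^{-1}$ and computes $g_1^{-2}b^Tg_1^2b$ directly for $b\in B_{\std}$; both work. For (2) the paper proves $\SO(3)/\SO(2)\cong\Spec A$ exactly as you suggest, but makes the \'etale-local surjectivity explicit by writing down three matrices $g_{xy},g_{yz},g_{zx}$ over the cover $\Spec A[1/\sqrt{x^2+y^2}]\sqcup\Spec A[1/\sqrt{y^2+z^2}]\sqcup\Spec A[1/\sqrt{z^2+x^2}]$, each sending $e_3$ to $(x,y,z)^T$.

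For part (3) your key idea is correct, but your execution plan has a genuine gap. You propose to ``fix the explicit matrix representative'' of $i_1$ as ``a $3\times 3$ matrix over $A$''. No such global lift exists: the quotient map $\SO(3)\to\SO(3)/\SO(2)\cong\Spec A$ is a nontrivial $\SO(2)$-torsor, so the universal sphere point $(x,y,z)$ does not lift to an element of $\SO(3,A)$. The paper resolves this by reusing the explicit \'etale cover from (2). It first checks that $\sigma$ extends to each chart (fixing $\sqrt{x^2+y^2}$, etc.), so that $\Gamma$-invariance can be tested there. On the chart $A[1/\sqrt{x^2+y^2}]$ the representative is $g_{xy}$, and because the three columns of $g_{xy}$ are homogeneous in $x,y,z$ of degrees $2,1,1$ respectively, one has $\sigma(g_{xy})=g_{xy}w_0$. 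Combined with the identity $w_0\,\sigma(g_1)=g_1w_0$, this gives
\[
\sigma(g_{xy}g_1)=g_{xy}g_1w_0,
\]
and since $w_0\in B_{\std}$ the Borel $g_{xy}g_1B_{\std}(g_{xy}g_1)^{-1}$ is $\sigma$-invariant; the same holds on the other two charts. Note two further corrections to your outline: the relevant identity is right-multiplication by $w_0$ (your formula $w_0\cdot\sigma(\text{rep})\cdot(\text{correction})=\text{rep}$ places it on the wrong side), and no separate ``correction in $\mathrm{diag}(\SO(2),1)(A)$'' is needed---the single element $w_0$, which lies in $B_{\std}$ but \emph{not} in $B_1\cap\SO(3)$, does all the work.
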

	\begin{proof}
		We only prove the assertions for $B_1$. The other is proved in a similar way. For (1), we may prove the equality
		\begin{equation}
			B_{\std}\cap g_1^{-1}\SO(3) g_1=g_1^{-1}\diag(\SO(2),1)g_1.
			\label{eq:B1SO(3)}
		\end{equation}
		by passing to the conjugate by $g_1^{-1}$. Let $R$ be an arbitrary $\bZ\left[1/2,\sqrt{-1}\right]$-algebra. Then, the computation of $\mu_2$ in \cite[Section 3.2]{MR4627704} implies
		\[g_1^{-1}\diag(\SO(2,R),1)g_1=\{\diag(a,a^{-1},1)\in\SL_3(R):~a\in R^\times\}.\]
		
		Let $b=(b_{ij})\in B_{\std}(R)$. Then, $g_1bg_1^{-1}$ belongs to $\SO(3,R)$ if and only if the equality $g_1^{-2}b^Tg_1^{2}b=1$ holds since $g_1$ is symmetric. One can check 
		\[g_1^{-2}b^Tg_1^{2}b\!=\!\left(\begin{array}{ccc}
			b_{11}b_{22}&2b_{12}b_{22}&b_{12}b_{23}\!+\!b_{22}b_{13}\\
			0&b_{33}^{-1}&b_{11}b_{23}\\
			\!-2\sqrt{-1}b_{11}b_{23}
			&\!-2\sqrt{-1}b_{22}b_{13}-2\sqrt{-1}b_{22}b_{13}
			&\!-4\sqrt{-1}b_{23}b_{13}\!+\!b_{33}^2
		\end{array}\right)\]
		(use $b_{11}b_{22}=b^{-1}_{33}$).
		It is now straightforward that $g_1^{-2}b^Tg_1^{2}b=1$ if and only if $b_{22}=b_{11}^{-1}$, $b_{33}=1$, and $b_{12}=b_{23}=b_{13}=0$. This proves the equality \eqref{eq:B1SO(3)}.
		
		We next prove (2). It is easy to show that the stabilizer subgroup of $\SO(3)$ at $(0~0~1)^T\in(\Spec A)(\bZ\left[1/2,\sqrt{-1}\right])$ is $\SO(2)$. We thus obtain a monomorphism
		\[\SO(3)/\SO(2)\hookrightarrow\Spec A.\]
		To see that it is an isomorphism, it will suffice to show that the identity map of $A$ is \'etale locally expressed as $g (0~0~1)^T$ for some $g\in \SO(3)$. Since \[(x^2+y^2)+(y^2+z^2)+(z^2+x^2)=2\in(\bZ\left[1/2,x,y,z\right]/(x^2+y^2+z^2-1))^\times,\]
		the affine schemes $\Spec A\left[1/\sqrt{x^2+y^2}\right]$, $\Spec A\left[1/\sqrt{y^2+z^2}\right]$, and $\Spec A\Big[1/\sqrt{z^2+x^2}\Big]$ form an \'etale cover of $\Spec A$. Set
		\[g_{xy}=\left(\begin{array}{ccc}
			\frac{zx}{\sqrt{x^2+y^2}}&-\frac{y}{\sqrt{x^2+y^2}}&x\\
			\frac{zy}{\sqrt{x^2+y^2}}&\frac{x}{\sqrt{x^2+y^2}}&y\\
			-\sqrt{x^2+y^2}&0&z
		\end{array}\right)\in\SO(3,A\left[1/\sqrt{x^2+y^2}\right]).\]
		Then, we have $g_{xy}(0~0~1)^T=(x~y~z)^T$ on this \'etale locus. One can find similar matrices sending $(0~0~1)^T$ to $(x~y~z)^T$ on the other \'etale loci. This shows (2).
		
		For (3), observe that the automorphism $\sigma$ on $A$ naturally extends to
		$A\left[1/\sqrt{x^2+y^2}\right]$ by $\sigma(\sqrt{x^2+y^2})=\sqrt{x^2+y^2}$. By construction of $i_1$, the Borel subgroups of $\SL_3$ corresponding to $i_1$ on $\Spec A\left[1/\sqrt{x^2+y^2}\right]$ is $g_{xy}B_1g_{xy}^{-1}\in\cB_{\SL_3}(A\left[1/\sqrt{x^2+y^2}\right])$.
		Since $\sigma(g_{xy}g_1)=g_{xy}w_0\sigma(g_1)=g_{xy}g_1w_0$ for $w_0\coloneqq\diag(1,-1,-1)$, $i_1$ is $\Gamma$-invariant on this \'etale locus. Similar arguments work on the other loci. This completes the proof.
	\end{proof}
	
	\begin{rem}
		The argument of (2) clearly works if we replace $A$ by
		\[\bZ\left[1/2,x,y,z\right]/(x^2+y^2+z^2-1).\]
	\end{rem}
	Notice that the structure morphism $\bZ\left[1/2,\sqrt{-1}\right]\to A$ is clearly $\Gamma$-equivariant. Put an action of $\SO(3)$ on $\Spec A^\Gamma\otimes_{\bZ\left[1/2\right]} \bZ\left[1/2,\sqrt{-1}\right]$ by the isomorphism
	\[A\cong A^\Gamma\otimes_{\bZ\left[1/2\right]} \bZ\left[1/2,\sqrt{-1}\right]\]
	(see Theorem \ref{thm:galoisdescent} (2)). In view of Theorem \ref{thm:galoisdescent} (3) and Proposition \ref{prop:descentofaction}, we obtain two $\SO(3)$-equivariant monomorphisms
	$Z_j\coloneqq\Spec A^\Gamma\hookrightarrow \cB_{\SL_3}$,
	which we denote by the same symbol $i_j$. For a digression, we describe $A^\Gamma$:
	\begin{prop}
		Define an action of $\SO(3)$ on
		\[\Spec \bZ\left[1/2,x',y',z'\right]/((x')^2+(y')^2+(z')^2+1)\]
		in a similar way to that on $\Spec A$.
		Then, there is an $\SO(3)$-equivariant isomorphism
		\[\Spec A^\Gamma\cong \Spec \bZ\left[1/2,x',y',z'\right]/((x')^2+(y')^2+(z')^2+1).\]
	\end{prop}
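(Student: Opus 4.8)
The plan is to write down the isomorphism explicitly via the substitution $x'=\sqrt{-1}\,x$, $y'=\sqrt{-1}\,y$, $z'=\sqrt{-1}\,z$, to check that this substitution already makes sense over $\bZ\left[1/2\right]$ on the ring of $\Gamma$-invariants, and then to promote ``isomorphism after base change to $\bZ\left[1/2,\sqrt{-1}\right]$'' to ``isomorphism over $\bZ\left[1/2\right]$'' by Galois descent together with faithfully flat descent. Concretely, I would first define a $\bZ\left[1/2\right]$-algebra homomorphism
\[\phi\colon \bZ\left[1/2,x',y',z'\right]/((x')^2+(y')^2+(z')^2+1)\to A,\qquad x'\mapsto\sqrt{-1}\,x,\ y'\mapsto\sqrt{-1}\,y,\ z'\mapsto\sqrt{-1}\,z.\]
This is well defined because $(\sqrt{-1}\,x)^2+(\sqrt{-1}\,y)^2+(\sqrt{-1}\,z)^2+1=-(x^2+y^2+z^2)+1=0$ in $A$. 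Moreover $\sigma(\sqrt{-1}\,x)=(-\sqrt{-1})(-x)=\sqrt{-1}\,x$, and similarly for $y,z$, while $\sigma$ fixes $\bZ\left[1/2\right]$ pointwise; hence the image of $\phi$ lies in $A^\Gamma$, so $\phi$ factors through a $\bZ\left[1/2\right]$-algebra map into $A^\Gamma$, which I continue to denote $\phi$.

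Next I would show $\phi$ is an isomorphism. Base changing along the faithfully flat extension $\bZ\left[1/2\right]\to\bZ\left[1/2,\sqrt{-1}\right]$ and identifying $A^\Gamma\otimes_{\bZ\left[1/2\right]}\bZ\left[1/2,\sqrt{-1}\right]\cong A$ via the canonical isomorphism of Theorem \ref{thm:galoisdescent} (2), one checks on generators that $\phi\otimes_{\bZ\left[1/2\right]}\bZ\left[1/2,\sqrt{-1}\right]$ becomes the $\bZ\left[1/2,\sqrt{-1}\right]$-algebra map
\[\bZ\left[1/2,\sqrt{-1},x',y',z'\right]/((x')^2+(y')^2+(z')^2+1)\to A,\qquad x'\mapsto\sqrt{-1}\,x,\ y'\mapsto\sqrt{-1}\,y,\ z'\mapsto\sqrt{-1}\,z.\]
This map admits the two-sided inverse $x\mapsto-\sqrt{-1}\,x'$, $y\mapsto-\sqrt{-1}\,y'$, $z\mapsto-\sqrt{-1}\,z'$, which is well defined since $(-\sqrt{-1}\,x')^2+(-\sqrt{-1}\,y')^2+(-\sqrt{-1}\,z')^2-1=-((x')^2+(y')^2+(z')^2+1)=0$; hence $\phi\otimes_{\bZ\left[1/2\right]}\bZ\left[1/2,\sqrt{-1}\right]$ is an isomorphism, and therefore so is $\phi$ by faithfully flat descent.

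Finally, for $\SO(3)$-equivariance, recall that the $\SO(3)$-action on $\Spec A^\Gamma$ and the $\SO(3)$-action on $\Spec \bZ\left[1/2,x',y',z'\right]/((x')^2+(y')^2+(z')^2+1)$ are, by construction and by Proposition \ref{prop:descentofaction}, characterized by the property that their base changes along $\bZ\left[1/2\right]\to\bZ\left[1/2,\sqrt{-1}\right]$ are the matrix actions on $\Spec A$ and on $\Spec \bZ\left[1/2,\sqrt{-1},x',y',z'\right]/((x')^2+(y')^2+(z')^2+1)$. On functors of points, the base change of $\Spec\phi$ sends a solution $(a,b,c)$ of $x^2+y^2+z^2=1$ to the solution $(\sqrt{-1}\,a,\sqrt{-1}\,b,\sqrt{-1}\,c)$ of $(x')^2+(y')^2+(z')^2=-1$, and this intertwines the two matrix actions because scalar multiplication commutes with any linear action. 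Thus the equivariance square for $\Spec\phi$ commutes after base change to $\bZ\left[1/2,\sqrt{-1}\right]$, hence already over $\bZ\left[1/2\right]$ by faithfully flat descent, completing the proof. The only step requiring any care is the second paragraph, namely verifying that the base change of $\phi$ really is the expected substitution and exhibiting its honest inverse; the interplay between the relations $x^2+y^2+z^2-1$ and $(x')^2+(y')^2+(z')^2+1$ is exactly what makes this work, and everything else is formal descent.
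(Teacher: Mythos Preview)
Your proof is correct and follows essentially the same approach as the paper: define the map $x'\mapsto\sqrt{-1}\,x$ (etc.), check it lands in $A^\Gamma$, base change along $\bZ[1/2]\to\bZ[1/2,\sqrt{-1}]$ and use Theorem~\ref{thm:galoisdescent}~(2) to identify the target with $A$, observe the resulting map is an $\SO(3)$-equivariant isomorphism (the paper says ``clearly'' where you spell out the inverse and the scalar-multiplication argument), and descend by faithful flatness. The only difference is that you provide a bit more detail at each step.
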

	\begin{proof}
		Define $f:\bZ\left[1/2,x',y',z'\right]/((x')^2+(y')^2+(z')^2+1)\to A^\Gamma$ by
		\[\begin{array}{ccc}
			x'\mapsto\sqrt{-1}x,&y'\mapsto\sqrt{-1}y,&z'\mapsto\sqrt{-1}z.
		\end{array}\]
		In view of Theorem \ref{thm:galoisdescent}, $\bZ\left[1/2,\sqrt{-1}\right]\otimes_{\bZ\left[1/2\right]} f$ can be identified with the map
		\[\bZ\left[1/2,\sqrt{-1},x',y',z'\right]/((x')^2+(y')^2+(z')^2+1)\to A;\]
		\[\begin{array}{cccc}
			\sqrt{-1}\mapsto \sqrt{-1},&x'\mapsto\sqrt{-1}x,&y'\mapsto\sqrt{-1}y,&z'\mapsto\sqrt{-1}z.
		\end{array}\]
		The resulting morphism
		\[\Spec A\to \Spec \bZ\left[1/2,\sqrt{-1},x',y',z'\right]/((x')^2+(y')^2+(z')^2+1)\]
		is clearly an $\SO(3)$-equivariant isomorphism. Since the action of $\SO(3)$ on
		\[\Spec A^\Gamma\otimes_{\bZ\left[1/2\right]}\bZ\left[1/2,\sqrt{-1}\right]\]
		is induced from the action on $\Spec A$ via the isomorphism $A\cong A^\Gamma\otimes_{\bZ\left[1/2\right]}\bZ\left[1/2,\sqrt{-1}\right]$, $\bZ\left[1/2,\sqrt{-1}\right]\otimes_{\bZ\left[1/2\right]}\Spec f$ is an $\SO(3)$-equivariant isomorphism. Since the containment $\bZ\left[1/2\right]\subset\bZ\left[1/2,\sqrt{-1}\right]$ is faithfully flat, $\Spec f$ is an $\SO(3)$-equivariant isomorphism.
	\end{proof}
	
	We demonstrate similar computations to the proof of Theorem \ref{thm:openorbit} (b)$\Rightarrow$(a) to give moduli descriptions of $Z_1$ and $Z_2$, and to prove that $i_1$ and $i_2$ are affine immersions:
	\begin{lem}\label{lem:translation1}
		Let $R$ be a $\bZ\left[1/2\right]$-algebra, and $g\in \SL_3(R)$. Set $B=gB_{\std}g^{-1}$.
		\begin{enumerate}
			\renewcommand{\labelenumi}{(\arabic{enumi})}
			\item The Borel subgroup $B$ belongs to the image of $i_1$ if and only if $c_1(g)=0$ and $c_3(g)\in R^\times$.
			\item The Borel subgroup $B$ belongs to the image of $i_2$ if and only if $c_1(g)\in R^\times$ and $c_2(g)=0$.
		\end{enumerate}
	\end{lem}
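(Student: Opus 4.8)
The plan is to reduce, by \'etale descent, to a matrix computation over $\bZ\left[1/2,\sqrt{-1}\right]$, and there to argue as in Theorem \ref{thm:openorbit}. I treat part (1); part (2) is identical with $g_2$, $c_1$ and $c_2$ in place of $g_1$, $c_1$ and $c_3$, using that $c_1(g_2)=1\in R^\times$ and $c_2(g_2)=0$ just as $c_1(g_1)=0$ and $c_3(g_1)=-2\sqrt{-1}\in R^\times$.

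All conditions in the statement are \'etale local on $\Spec R$ — the image of the monomorphism $i_1$ of \'etale sheaves is again an \'etale sheaf, and both ``$c_1(g)=0$'' and ``$c_3(g)\in R^\times$'' descend along faithfully flat ring maps — and $\Spec R\left[\sqrt{-1}\right]\to\Spec R$ is an \'etale cover, so we may assume that $R$ is a $\bZ\left[1/2,\sqrt{-1}\right]$-algebra. Over such algebras $i_1$ is, by Proposition \ref{prop:SO(3)/SO(2)=S^2}, the orbit map of $B_1=g_1B_{\std}g_1^{-1}$, and $\SO(3)/B_1\cap\SO(3)$ is the \'etale sheafification of the naive orbit presheaf; since the normalizer of $B_{\std}$ in $\GL_3$ is the subgroup of invertible upper-triangular matrices, ``$B=gB_{\std}g^{-1}$ lies in the image of $i_1$'' is equivalent to ``\'etale locally, $g=kg_1b$ with $k\in\SO(3)$ and $b$ invertible upper-triangular''. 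Thus part (1) reduces to showing, over a $\bZ\left[1/2,\sqrt{-1}\right]$-algebra, that such a factorization exists \'etale locally if and only if $c_1(g)=0$ and $c_3(g)\in R^\times$.

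For the ``only if'' direction, the column computations in the proof of Lemma \ref{lem:kgbreduction} (which use only that $b$ is upper triangular and that $k$ preserves the bilinear form, so they apply verbatim to invertible upper-triangular $b$) give $c_1(g)=b_{11}^2c_1(g_1)=0$ and, since $c_1(g_1)=0$, $c_3(g)=b_{11}b_{22}c_3(g_1)=-2\sqrt{-1}\,b_{11}b_{22}\in R^\times$, where $b_{11},b_{22}\in R^\times$ because a factor of an invertible product is invertible; these identities hold over the \'etale cover on which $g=kg_1b$, hence the conditions descend. For the ``if'' direction, which is the main point, I would imitate the proof of Theorem \ref{thm:openorbit}: assuming $c_1(g)=0$ and $c_3(g)\in R^\times$, solve the triangular system $b^{T}g_1^{T}g_1b=g^{T}g$ for an invertible upper-triangular $b$ one entry at a time, normalizing $b_{11}=1$ (every further division is by $c_3(g)$, $2$, or $\sqrt{-1}$, all units); the off-diagonal entries and $b_{22}$ come out directly, while $b_{33}^{2}$ is determined, and the relation $\det g=1$ forces its value to equal $1/(4b_{22}^{2})$, a square, so one may take $b$ over $R$ itself with the sign of $b_{33}$ chosen so that $\det b=(\det g_1)^{-1}$. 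Then $gb^{-1}$ has the same Gram matrix $g_1^{T}g_1$ and the same determinant as $g_1$, whence $k:=gb^{-1}g_1^{-1}$ lies in $\SO(3)$ and $g=kg_1b$. The obstacle is therefore computational rather than conceptual: organizing this triangular elimination cleanly, checking the single polynomial identity (a rewriting of $\det(g^{T}g)=1$) that eliminates the final square root, and verifying that the analogous elimination against $g_2$ settles part (2).
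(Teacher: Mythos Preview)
Your argument is correct and differs from the paper's in the ``if'' direction. The paper proceeds in the spirit of Theorem \ref{thm:openorbit}: it first normalizes $g$ by right multiplication by elements of $B_{\std}$ (e.g.\ to achieve $c_3=1$, $(v_2,v_2)=1$), then builds an explicit $k\in\SO(3,R)$ by a Gram--Schmidt-type construction from the columns $v_1,v_2,v_3$, and finally exhibits a concrete $b$ with $k^{-1}gb=g_1'$. You instead solve the congruence problem $b^T(g_1^Tg_1)b=g^Tg$ for upper-triangular $b$ directly and read off $k=gb^{-1}g_1^{-1}\in\SO(3)$ afterwards; the key identity fixing $b_{33}^2=1/(4b_{22}^2)$ is indeed exactly the linearity of $\det(g^Tg)$ in the $(3,3)$ entry together with $c_1=0$. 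Your route is more uniform and makes transparent why no \'etale extension beyond $R\left[\sqrt{-1}\right]$ is needed in (1), whereas the paper's explicit $k$ makes the geometric content (orthonormalization against the degenerate form) more visible. For (2) your sketch is fine but note one asymmetry you should flag: solving $b^T(g_2^Tg_2)b=g^Tg$ forces $b_{11}^2=c_1(g)$, so an \'etale cover $R\to R\left[\sqrt{c_1}\right]$ is genuinely required there (the paper does the same), and the $(2,2)$ equation $b_{12}^2=(v_2,v_2)$ is then automatic precisely because $c_2(g)=0$; after that the product $b_{22}b_{33}$ and the ratio $b_{23}/b_{33}$ are determined, and the determinant identity again forces $(b_{11}b_{22}b_{33})^2=1/4$, so a sign choice gives $\det b=1/2$ and $k\in\SO(3)$.
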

	
	\begin{proof}
		We remark that all conditions are local in the \'etale topology by Lemma \ref{lem:imisetalelocal} and Lemma \ref{lem:ffreduction}. Hence we may replace $R$ by $R\left[\sqrt{-1}\right]$ to assume that $R$ is a $\bZ\left[\sqrt{-1}\right]$-algebra. Put
		\[g_1'=\left(\begin{array}{ccc}
			1&-\sqrt{-1}&0\\
			-\sqrt{-1}&1&0\\
			0&0&\frac{1}{2}
		\end{array}\right)\in\SL_3(\bZ\left[1/2,\sqrt{-1}\right]).\]
		One can easily check $B_1=g_1'B_{\std}(g_1')^{-1}$.
		Suppose that $B$ belongs to the image of $i_1$. To prove that $c_1(g)=0$ and $c_3(g)\in R^\times$, we may assume that there exists $k\in\SO(3,R)$ such that $B=kB_1k^{-1}$. Then, \cite[Corollaire 5.3.12 and Proposition 5.1.3]{MR0218363} imply $g^{-1}kg_1'\in B_{\std}(R)$. The assertions $c_1(g)=0$ and $c_3(g)\in R^\times$ now follow from Lemma \ref{lem:kgbreduction}. 
		
		Conversely, suppose that $c_1(g)=0$ and $c_3(g)\in R^\times$. Since $B=gbB_{\std}b^{-1}g^{-1}$ for every element $b\in B_{\std}(R)$, one can replace $g$ by
		\[g\left(\begin{array}{ccc}
			\frac{1}{c_3}&\frac{1-(v_2,v_2)}{2c_3}&0\\
			0&1&0\\
			0&0&c_3
		\end{array}\right)\]
		to assume that $(v_2,v_2)=c_3=1$. We remark that $c_1=0$ still holds from Lemma \ref{lem:kgbreduction}. Set
		\[k=\left(\begin{array}{ccc}
			v_2&-\sqrt{-1}(v_2-v_1)&\sqrt{-1}(v_3+(v_1-v_2,v_3)v_1-(v_1,v_3)v_2)
		\end{array}\right).\]
		By a similar argument to Theorem \ref{thm:openorbit}, $k$ belongs to $\SO(3,R)$. In view of Lemma \ref{lem:kgbreduction}, we may replace $g$ by $k^{-1}g$ to assume $v_1=e_1-\sqrt{-1}e_2$ and $v_2=e_1$. Set
		\[b=\left(\begin{array}{ccc}
			1&\sqrt{-1}&\frac{1}{2}g_{13}\\
			0&-2\sqrt{-1}&-\frac{\sqrt{-1}}{2}g_{13}-\frac{1}{2}g_{23}\\
			0&0&\frac{\sqrt{-1}}{2}
		\end{array}\right).\]
		Then, we have $gb=g_1'$ since $\det g=1$. The assertion now follows from
		\[B=gB_{\std}g^{-1}=g_1'B_{\std}(g_1')^{-1}=B_1.\]
		
		We next prove (2). The ``only if'' direction follows by a similar argument to (1). Suppose that $c_1(g)\in R^\times$ and $c_2(g)=0$. We may pass to the \'etale cover $\Spec R\left[\sqrt{r}_1\right]$, and multiply
		\[\left(\begin{array}{ccc}
			\frac{1}{\sqrt{r}_1}&-\frac{(v_1,v_2)}{c_1}&0\\
			0&1&0\\
			0&0&\sqrt{r}_1
		\end{array}\right)\]
		to $g$ from the right side to assume $c_1=1$ and $(v_1,v_2)=0$. Since $c_2=0$, we have $(v_2,v_2)=0$. Compute $1=(\det g)^2=\det gg^T$ to get $(v_2,v_3)^2=-1$. We then replace $g$ by
		\[g\left(\begin{array}{ccc}
			1&0&0\\
			0&1&\frac{(1-(v_3,v_3))(v_2,v_3)}{2}\\
			0&0&1
		\end{array}\right)\]
		to assume $(v_3,v_3)=1$. It is clear that the equalities
		\[\begin{array}{ccc}
			c_1=1,&(v_1,v_2)=(v_2,v_2)=0,&(v_2,v_3)^2=-1
		\end{array}\]
		still hold. Define a matrix $k$ by
		\[v_1(k)=\sqrt{-1}(v_2(g),v_3(g))v_1(g),\]
		\begin{flalign*}
			v_3(k)
			&=\sqrt{-1}(v_2(g),v_3(g))(v_3(g)-(v_1(g),v_3(g))v_1(g)\\
			&-\frac{1}{2}(v_1(g),v_3(g))^2(v_2(g),v_3(g))v_2(g)),
		\end{flalign*}
		\[v_2(k)=v_2(g)+\sqrt{-1}v_3(k).\]
		Then, $k$ belongs to $\SO(3,R)$. Replace $g$ by $k^{-1}g$ to assume
		\[\begin{array}{cc}
			v_1(g)=\sqrt{-1}(v_2(g),v_3(g))e_1,&v_2(g)=e_2-\sqrt{-1}e_3.
		\end{array}\]
		Set
		\[b=\left(\begin{array}{ccc}
			\frac{\sqrt{-1}(v_2,v_3)}{2}&0&-2g_{13}\\
			0&1&-\sqrt{-1}(1+2g_{23}(v_2,v_3))\\
			0&0&2\sqrt{-1}(v_2,v_3)
		\end{array}\right).\]
		Then, we have
		\[gb=\left(\begin{array}{ccc}
			\frac{1}{2}&0&0\\
			0&1&-\sqrt{-1}\\
			0&-\sqrt{-1}&1
		\end{array}\right)\]
		since $\det g=1$. The assertion now follows from
		\[gB_{\std}g^{-1}=\left(\begin{array}{ccc}
			\frac{1}{2}&0&0\\
			0&1&-\sqrt{-1}\\
			0&-\sqrt{-1}&1
		\end{array}\right)B_{\std}\left(\begin{array}{ccc}
			\frac{1}{2}&0&0\\
			0&1&-\sqrt{-1}\\
			0&-\sqrt{-1}&1
		\end{array}\right)^{-1}=g_2 B_{\std}g^{-1}_2.\]
		This completes the proof.
	\end{proof}

	\begin{property}
		Let $F$ be an algebraically closed field of characteristic different from 2, and $V=(0\subset Fv_1\subset V_2\subset F^3)$ be a flag.
		\begin{enumerate}
			\renewcommand{\labelenumi}{(\arabic{enumi})}
			\item We say $V$ satisfies Property (LC1) if $(v_1,v_1)=0$, and every vector $v\in V_2$ satisfies either $Fv_1=Fv$ or $(v_1,v)\neq 0$.
			\item We say $V$ satisfies Property (LC2) if $(v_1,v_1)\neq 0$, and there exists a nonzero element $v\in V_2$ such that $(v_1,v)=(v,v)=0$. We remark that if such $v$ exists, $v$ is uniquely determined up to nonzero scalar by the property $(v_1,v)=0$ since $\dim V_2=2$.
		\end{enumerate}
	\end{property}
	\begin{lem}\label{lem:flagcriterion1}
		Let $R$ be a $\bZ\left[1/2\right]$-algebra, and $g\in \SL_3(R)$.
		\begin{enumerate}
			\renewcommand{\labelenumi}{(\arabic{enumi})}
			\item If $c_1(g)=0$, and $c_3(g)\in R^\times$ then the flags corresponding to all geometric fibers of $B$ satisfy Property (LC1). The converse holds if $R$ is reduced.
			\item If $c_1(g)\in R^\times$, and $c_2(g)=0$ then the flags corresponding to all geometric fibers of $B$ satisfy Property (LC2). The converse holds if $R$ is reduced.
		\end{enumerate}
	\end{lem}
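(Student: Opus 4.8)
The plan is to reduce to the case in which $R$ is an algebraically closed field of characteristic $\neq 2$, and then to argue by elementary linear algebra with the form $(-,-)$, in the spirit of the computation already carried out in the proof of Theorem \ref{thm:openorbit}.

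First I would set up the dictionary between the ring-theoretic hypotheses and their fibrewise counterparts. Fix a geometric point $\bar{x}\colon\Spec F\to\Spec R$ with $F=\overline{\kappa(x)}$, and let $h\in\SL_3(F)$ be the image of $g$. Then each $c_i(h)$ is the image of $c_i(g)$ under $R\to F$, and the geometric fibre of $B=gB_{\std}g^{-1}$ at $\bar{x}$ is $hB_{\std,F}h^{-1}$, whose flag is $V(h)=(0\subset Fv_1(h)\subset Fv_1(h)+Fv_2(h)\subset F^3)$. Since an element of $R$ is a unit precisely when its image in every residue field is nonzero, the condition $c_1(g)\in R^\times$ (resp.\ $c_3(g)\in R^\times$) is equivalent to: $c_1(h)\neq 0$ (resp.\ $c_3(h)\neq 0$) for every geometric fibre $h$. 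Conversely, if $c_1(g)=0$ (resp.\ $c_2(g)=0$) then trivially $c_1(h)=0$ (resp.\ $c_2(h)=0$) at every fibre; and if $c_1(h)=0$ (resp.\ $c_2(h)=0$) at all fibres then $c_1(g)$ (resp.\ $c_2(g)$) lies in the nilradical of $R$, hence is $0$ as soon as $R$ is reduced. This is the only place the reducedness hypothesis is used. With these reductions in hand it suffices to prove the field version: for $F$ algebraically closed of characteristic $\neq 2$ and $h\in\SL_3(F)$, the flag $V(h)$ satisfies (LC1) if and only if $c_1(h)=0$ and $c_3(h)\neq 0$, and it satisfies (LC2) if and only if $c_1(h)\neq 0$ and $c_2(h)=0$.

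For (LC1), write $v_i=v_i(h)$; since $h$ is invertible, $v_1,v_2$ are linearly independent and span $V_2$, and $(v_1,av_1+bv_2)=a\,c_1(h)+b\,c_3(h)$. The first half of (LC1) is literally $c_1(h)=0$; granting it, the linear form $v\mapsto(v_1,v)$ on $V_2$ annihilates $v_1$, so a nonzero $v\in V_2$ with $(v_1,v)=0$ is forced into $Fv_1$ exactly when this form is nonzero, i.e.\ when $c_3(h)\neq 0$. For (LC2), the first half is $c_1(h)\neq 0$; granting it, the same form on $V_2$ is nonzero (being nonzero on $v_1$), so its kernel is the line spanned by $w:=c_1(h)v_2-c_3(h)v_1$, which is nonzero because $v_1,v_2$ are independent and $c_1(h)\neq 0$. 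Any $v$ as in (LC2) is a nonzero multiple of $w$, so $(v,v)=0$ is equivalent to $(w,w)=0$; a direct expansion gives $(w,w)=c_1(h)(c_1(h)(v_2,v_2)-(v_1,v_2)^2)=c_1(h)c_2(h)$, which vanishes iff $c_2(h)=0$ since $c_1(h)\neq 0$. Both equivalences follow. I do not anticipate a genuine obstacle here: the only real content is the Gram-type identity $(w,w)=c_1c_2$ on the $(v_1,-)$-orthogonal line in $V_2$, together with the care needed in the converse to promote ``vanishing at all geometric fibres'' to ``vanishing in $R$'', which is precisely where $R$ reduced is used.
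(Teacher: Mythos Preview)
Your proof is correct and follows essentially the same approach as the paper: reduce to an algebraically closed field using that units are detected fibrewise and that vanishing at all residue fields forces vanishing in a reduced ring, then settle the field case by elementary linear algebra with the form $(-,-)$. The only cosmetic difference is that for (LC2) the paper works with the orthogonal vector $v_2-\frac{(v_1,v_2)}{c_1}v_1$ rather than your $w=c_1v_2-c_3v_1$, but the key Gram-type identity is the same up to a factor of $c_1^2$.
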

	\begin{proof}
		We remark that if $R$ is reduced, an element $r\in R$ is zero if and only if it is zero in all residue fields of $R$. Therefore, we may assume that $R=F$ is an algebraically closed field. 
		
		Part (1) follows since the flag $V=(0\subset Fv_1\subset Fv_1\oplus Fv_2\subset F^3)$ satisfies (LC1) if and only if $(v_1,v_1)=0$ and $(v_1,v_2)\neq 0$. We next prove that $V$ satisfies (LC2) if and only if $c_1(g)\in F^\times$ and $c_2(g)=0$. Notice that $c_1(g)\neq 0$ holds under the both conditions. We also have
		\[\begin{array}{ccc}
			v_2-\frac{(v_1,v_2)}{c_1}v_1\neq 0,
			&(v_2-\frac{(v_1,v_2)}{c_1}v_1,v_1)!=\!0,
			&c_2!=\!c_1(v_2-\frac{(v_1,v_2)}{c_1}v_1,v_2-\frac{(v_1,v_2)}{c_1}v_1).
		\end{array}\]
		The equivalence of (2) is now obvious.
	\end{proof}
	Let $\hat{Z}_1$ (resp.\ $\hat{Z}_2$) be the moduli space of flags with Property (LC1) (resp.\ (LC2)). That is, for a $\bZ\left[1/2\right]$-algebra $R$, $\hat{Z}_1(R)$ (resp.\ $\hat{Z}_2(R)$) consists of Borel subgroups of $\SL_3$ over $R$ whose corresponding flags at geometric points of $\Spec R$ satisfy (LC1) (resp.\ (LC2)).
	\begin{thm}\label{thm:modulimiddles}
		Let $R$ be a reduced $\bZ\left[1/2\right]$-algebra, and $B\in\cB_{\SL_3}(R)$.
		\begin{enumerate}
			\renewcommand{\labelenumi}{(\arabic{enumi})}
			\item The following conditions are equivalent:
			\begin{enumerate}
				\item[(a)] $B$ belongs to the image of $i_1$.
				\item[(b)] The flags corresponding to all geometric fibers of $B$ satisfy Property (LC1).
			\end{enumerate}
			\item The following conditions are equivalent:
			\begin{enumerate}
				\item[(a)] $B$ belongs to the image of $i_2$.
				\item[(b)] The flags corresponding to all geometric fibers of $B$ satisfy Property (LC2).
			\end{enumerate}
		\end{enumerate}
		In particular, $Z_1$ (resp.\ $Z_2$) is the left Kan extension of the copresheaf $\hat{Z}_1|_{\CAlg_{\bZ\left[1/2\right]}^{\red}}$ (resp.\ $\hat{Z}_2|_{\CAlg_{\bZ\left[1/2\right]}^{\red}}$).
	\end{thm}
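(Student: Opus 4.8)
The plan is to read both equivalences off from Lemmas~\ref{lem:translation1} and~\ref{lem:flagcriterion1} after reducing to a matrix Borel subgroup, and then to deduce the final sentence from part~(1) together with a formal property of affine schemes with reduced coordinate ring. I will write out~(1); statement~(2), together with the corresponding claim for $Z_2$, is proved in exactly the same way using the second parts of the two lemmas.

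\emph{Proof of (a)$\Leftrightarrow$(b).} First I would observe that both conditions are local in the étale topology on $\Spec R$: condition~(a) by Lemma~\ref{lem:imisetalelocal}, and condition~(b) by Lemma~\ref{lem:ffreduction}, because whether all geometric fibers of a Borel subgroup satisfy Property~(LC1) is unchanged after a faithfully flat base change (every geometric point of the base lifts to the cover and the fiber of $B$ at it is the same). Since an étale algebra over a reduced ring is again reduced, I may pass to an étale cover of $\Spec R$ and assume, as in the proof of Theorem~\ref{thm:openorbit}, that $B=gB_{\std}g^{-1}$ for some $g\in\SL_3(R)$ (using \cite{MR0218363} Proposition 5.1.3 and Corollaire 5.3.12). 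Now Lemma~\ref{lem:translation1}(1) says that (a) holds if and only if $c_1(g)=0$ and $c_3(g)\in R^\times$, while Lemma~\ref{lem:flagcriterion1}(1) --- this is the only place where the reducedness of $R$ enters --- says that (b) holds if and only if $c_1(g)=0$ and $c_3(g)\in R^\times$. Hence (a)$\Leftrightarrow$(b).

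\emph{The left Kan extension statement.} By part~(1) the subfunctors $Z_1,\hat Z_1\subseteq\cB_{\SL_3}$ have the same restriction to $\CAlg_{\bZ\left[1/2\right]}^{\red}$, so it is enough to show that the affine scheme $Z_1=\Spec A^\Gamma$ agrees with the left Kan extension of its own restriction to reduced $\bZ\left[1/2\right]$-algebras along $\iota\colon\CAlg_{\bZ\left[1/2\right]}^{\red}\hookrightarrow\CAlg_{\bZ\left[1/2\right]}$. I would establish this in general for $\Spec S$ with $S$ reduced: the counit map $\operatorname{colim}_{(T,\,T\to R)}\Hom(S,T)\to\Hom(S,R)$ --- the colimit taken over the comma category $(\iota\downarrow R)$ --- is surjective because every $\phi\colon S\to R$ is the image of $\mathrm{id}_S$ under the object $(S,\phi)$ of that comma category, and injective because every representative $\psi\colon S\to T\to R$ is identified with $\mathrm{id}_S\in\Hom(S,S)$ via the comma-category morphism $\psi\colon(S,\phi)\to(T,\,T\to R)$. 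It then remains to note that $A^\Gamma$ is reduced, being a subring of $A=\bZ\left[1/2,\sqrt{-1},x,y,z\right]/(x^2+y^2+z^2-1)$, which is reduced because the quadric $x^2+y^2+z^2=1$ is smooth over $\bZ\left[1/2,\sqrt{-1}\right]$ (equivalently, $A^\Gamma\cong\bZ\left[1/2,x',y',z'\right]/((x')^2+(y')^2+(z')^2+1)$ is visibly reduced). The same argument gives the assertion for $Z_2$.

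I do not expect a genuine difficulty here: once Lemmas~\ref{lem:translation1} and~\ref{lem:flagcriterion1} are available, the only points needing care are that the étale localization preserves reducedness --- so that the converse direction of Lemma~\ref{lem:flagcriterion1} remains applicable --- and the purely categorical fact that an affine scheme with reduced coordinate ring is the left Kan extension of its restriction to reduced rings, which is the one ingredient genuinely new to this statement.
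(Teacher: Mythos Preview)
Your argument for the equivalences (a)$\Leftrightarrow$(b) is correct and essentially identical to the paper's: reduce \'etale locally to $B=gB_{\std}g^{-1}$ and combine Lemmas~\ref{lem:translation1} and~\ref{lem:flagcriterion1}; you simply make explicit the point (implicit in the paper) that \'etale covers preserve reducedness so that the converse of Lemma~\ref{lem:flagcriterion1} still applies. For the ``In particular'' clause the paper gives no argument at all, so your categorical verification that $\Spec S$ with $S$ reduced is the left Kan extension of its restriction to reduced algebras, together with the observation that $A^\Gamma$ is reduced, is a welcome addition rather than a deviation.
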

	\begin{proof}
		Since all conditions are local in the \'etale topology, we may assume that $B$ is $\SL_3$-conjugate to $B_{\std}$. Then, the equivalences follow from Lemma \ref{lem:translation1} and Lemma \ref{lem:flagcriterion1}.
	\end{proof}
	\begin{thm}\label{thm:iisimmersion}
		The morphisms $i_1$ and $i_2$ are affine immersions.
	\end{thm}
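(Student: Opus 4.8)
The plan is to run the argument of Theorem~\ref{thm:openorbit}(3) with Lemma~\ref{lem:translation1} playing the role of the chain of equivalences used there; the difference is that the locus cut out on a test scheme is now a principal open inside a closed subscheme rather than a principal open of the whole test scheme. First observe that $i_1$ and $i_2$ are automatically affine morphisms: each of $Z_1,Z_2$ is the affine scheme $\Spec A^\Gamma$, and $\cB_{\SL_3}$ is separated over $\bZ\left[1/2\right]$ (indeed projective), so for any morphism $\Spec R\to\cB_{\SL_3}$ from an affine scheme the fibre product $Z_j\times_{\cB_{\SL_3}}\Spec R$ is obtained from the affine scheme $Z_j\times_{\bZ\left[1/2\right]}\Spec R$ by pulling back the diagonal $\cB_{\SL_3}\to\cB_{\SL_3}\times_{\bZ\left[1/2\right]}\cB_{\SL_3}$, hence is a closed subscheme of an affine scheme and so affine. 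Since being an immersion is Zariski-local on the target, it therefore suffices to prove that for every affine scheme $\Spec R$ over $\bZ\left[1/2\right]$ equipped with a morphism $\Spec R\to\cB_{\SL_3}$, corresponding to a Borel subgroup $B\in\cB_{\SL_3}(R)$, the affine base change $Z_j\times_{\cB_{\SL_3}}\Spec R\to\Spec R$ is an immersion.

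This is local on $\Spec R$ for the \'etale topology --- all morphisms in sight are quasi-compact and quasi-compact immersions descend (see Appendix~\ref{sec:descent}) --- so by \cite{MR0218363} Proposition 5.1.3 and Corollaire 5.3.12 we may pass to an \'etale cover and assume $B=gB_{\std}g^{-1}$ for some $g\in\SL_3(R)$. Put $c_i=c_i(g)\in R$; since each $c_i$ is a polynomial in the matrix entries, every ring homomorphism $f\colon R\to S$ carries $c_i$ to $c_i(f(g))$. Because $i_1$ is a monomorphism, $Z_1\times_{\cB_{\SL_3}}\Spec R$ is the subfunctor of $\Spec R$ whose $S$-points are the maps $f\colon R\to S$ for which $B|_S=f(g)B_{\std}f(g)^{-1}$ lies in the image of $i_1$, and by Lemma~\ref{lem:translation1}(1) this amounts to $f(c_1)=0$ and $f(c_3)\in S^\times$ --- that is, to $f$ factoring through $R\to R_{c_3}/c_1R_{c_3}$. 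Hence
\[Z_1\times_{\cB_{\SL_3}}\Spec R\cong\Spec\left(R_{c_3}/c_1R_{c_3}\right),\]
and the projection to $\Spec R$ is the closed immersion $\Spec\left(R_{c_3}/c_1R_{c_3}\right)\hookrightarrow\Spec R_{c_3}$ followed by the principal open immersion $\Spec R_{c_3}\hookrightarrow\Spec R$, hence an affine immersion. The identical computation with Lemma~\ref{lem:translation1}(2) gives $Z_2\times_{\cB_{\SL_3}}\Spec R\cong\Spec\left(R_{c_1}/c_2R_{c_1}\right)$, again an affine immersion over $\Spec R$. Descending back along the \'etale cover completes the proof.

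I expect the main obstacle to be the descent bookkeeping rather than any new idea: one must check that the fibre product of the monomorphism $i_j$ over $\Spec R$ is genuinely the subfunctor cut out by ``$B$ lies in the image of $i_j$'', and that being an affine immersion descends along the faithfully flat quasi-compact map used to trivialize $B$. Both are standard and should be among the general results of Appendix~\ref{sec:descent}; granting them, the theorem reduces to the explicit identification of a fibre product of functors, entirely parallel to Theorem~\ref{thm:openorbit}(3).
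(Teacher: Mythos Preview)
Your argument is correct and follows the paper's approach closely: both reduce to identifying the fibre product over a test affine scheme as an explicit locally closed affine subscheme via Lemma~\ref{lem:translation1}, exactly parallel to the proof of Theorem~\ref{thm:openorbit}(3). The only cosmetic difference is that the paper first base-changes to $\bZ\left[1/2,\sqrt{-1}\right]$ (so that $Z_j$ becomes the orbit $\Spec A$ with its base point) before running the computation, whereas you work directly over $\bZ\left[1/2\right]$ and descend along the \'etale cover trivialising $B$; both routes invoke the same standard fpqc descent of affine immersions, which is not stated explicitly in Appendix~\ref{sec:descent} but is entirely standard.
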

	
	\begin{proof}
		We only prove (1). It will suffice to prove $i_1:\Spec A\to\cB_{\SL_3}$ is an affine immersion by taking the base change $-\otimes_{\bZ\left[1/2\right]}\bZ\left[1/2,\sqrt{-1}\right]$. Let $R$ be a $\bZ\left[1/2,\sqrt{-1}\right]$-algebra, and $g\in\SL_3(R)$. Consider the morphism $\Spec R\to \cB_{\SL_3}$ determined by $B\coloneqq g B_{\std}g^{-1}$. By a similar argument to Theorem \ref{thm:openorbit} (2), it will suffice to show that the projection $\Spec A\times_{\cB_{\SL_3}}\Spec R\to \Spec R$ is represented by the affine immersion $\Spec R_{c_1(g)}/(c_2(g))\hookrightarrow\Spec R$. For this, it will suffice to show that for a $\bZ\left[1/2,\sqrt{-1}\right]$-algebra $S$, a $\bZ\left[1/2,\sqrt{-1}\right]$-algebra homomorphism $f\in (\Spec R)(S)$ belongs to $(\Spec A\times_{\cB_{\SL_3}}\Spec R)(S)\subset(\Spec R)(S)$ if and only if $f(c_1)\in S^\times$ and $f(c_2)=0$. This follows from Lemma \ref{lem:translation1}.
	\end{proof}
	
	\subsection{Closed Subscheme $\cB_{\SO(3)}$}\label{sec:closedorbit}
	
	Define an involution $\theta$ on $\SL_3$ by $\theta(g)=(g^T)^{-1}$. Let $\cB^\theta_{\SL_3}$ be the moduli scheme of $\theta$-stable Borel subgroups of $\SL_3$ (see \cite[Lemma 3.1.1]{MR4627704}). A similar argument to \cite[Proposition 5.5.2]{hayashijanuszewski} implies that the map $B\mapsto B\cap \SO(3)$ determines an $\SO(3)$-equivariant isomorphism $\cB_{\SL_3}^\theta\cong\cB_{\SO(3)}$. Write $i_{\clo}$ for the composite arrow $\cB_{\SO(3)}\cong \cB_{\SL_3}^\theta\hookrightarrow\cB_{\SL_3}$.
	
	\begin{lem}\label{lem:iclo}
		The morphism $i_{\clo}$ is a closed immersion.
	\end{lem}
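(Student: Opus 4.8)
The plan is to realize $\cB_{\SL_3}^\theta$ as the fixed locus of an involution of the projective $\bZ\left[1/2\right]$-scheme $\cB_{\SL_3}$, and then to use that fixed loci of endomorphisms of a separated scheme are cut out by closed immersions. First I would check that $\theta$ induces a morphism $\tilde\theta\colon\cB_{\SL_3}\to\cB_{\SL_3}$: since $\theta\colon\SL_3\to\SL_3$ is an automorphism of group schemes over $\bZ\left[1/2\right]$ (with $\theta^2=\id$), for every $\bZ\left[1/2\right]$-algebra $R$ and every Borel subgroup $B\in\cB_{\SL_3}(R)$ the scheme-theoretic image $\theta(B)$ is again a Borel subgroup of $\SL_{3,R}$, and this assignment is functorial in $R$, hence defines $\tilde\theta$, which is an involution because $\theta$ is. By the definition of $\cB_{\SL_3}^\theta$ as the moduli scheme of $\theta$-stable Borel subgroups (\cite{hayashilinebdl} Lemma 3.1), for every $R$ one has $\cB_{\SL_3}^\theta(R)=\{B\in\cB_{\SL_3}(R):\tilde\theta(B)=B\}$, so $\cB_{\SL_3}^\theta$ represents the equalizer of $\id_{\cB_{\SL_3}}$ and $\tilde\theta$, and by the Yoneda lemma the inclusion $\cB_{\SL_3}^\theta\hookrightarrow\cB_{\SL_3}$ is identified with that equalizer.

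Next I would identify this equalizer with a base change of the diagonal: it is the fiber product of $(\id,\tilde\theta)\colon\cB_{\SL_3}\to\cB_{\SL_3}\times_{\bZ\left[1/2\right]}\cB_{\SL_3}$ and the diagonal $\Delta_{\cB_{\SL_3}}$. Since $\cB_{\SL_3}$ is projective, in particular separated, over $\bZ\left[1/2\right]$ (\cite{MR0218363} Corollaire 5.8.3), $\Delta_{\cB_{\SL_3}}$ is a closed immersion, and closed immersions are stable under base change; hence $\cB_{\SL_3}^\theta\hookrightarrow\cB_{\SL_3}$ is a closed immersion. Precomposing with the $\SO(3)$-equivariant isomorphism $\cB_{\SO(3)}\cong\cB_{\SL_3}^\theta$ then shows that $i_{\clo}$ is a closed immersion. (Alternatively, one may note that $i_{\clo}$ is a monomorphism, being an inclusion of subfunctors, between schemes proper over $\bZ\left[1/2\right]$, and invoke that a proper monomorphism is a closed immersion.)

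I do not expect a serious obstacle here: everything is formal once $\tilde\theta$ is in hand. The one point worth a line of care is that the moduli description of $\cB_{\SL_3}^\theta$ really matches the scheme-theoretic equalizer, so that no infinitesimal information is lost; this is automatic, since both represent the same functor on all $\bZ\left[1/2\right]$-algebras.
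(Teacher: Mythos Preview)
Your argument is correct and is essentially the same as the paper's: the paper simply cites \cite{hayashilinebdl} Lemma 3.1 for the fact that $\cB_{\SL_3}^\theta$ is a closed subscheme of $\cB_{\SL_3}$, whereas you spell out the standard proof of that fact (equalizer of $\id$ and $\tilde\theta$ as a base change of the diagonal of a separated scheme). Your alternative via ``proper monomorphism $\Rightarrow$ closed immersion'' is also valid and equally standard.
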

	
	\begin{proof}
		This follows since $\cB^\theta_{\SL_3}$ is a closed subscheme of $\cB_{\SL_3}$; see \cite[Lemma 3.1.1]{MR4627704}.
	\end{proof}
	
	As in the former sections, let us study $R$-points of $\cB^\theta_{\SL_3}$ in terms of flags at fibers.
	
	\begin{lem}\label{lem:translation2}
		Let $R$ be a $\bZ\left[1/2\right]$-algebra, and $g\in \SL_3(R)$. Then, the Borel subgroup $B\coloneqq gB_{\std}g^{-1}$ is $\theta$-stable if and only if $c_1(g)=c_3(g)=0$.
	\end{lem}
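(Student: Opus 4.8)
The plan is to convert $\theta$-stability of $B=gB_{\std}g^{-1}$ into a linear-algebra condition on the Gram matrix $h:=g^{T}g$, whose entries are $h_{ij}=(v_i(g),v_j(g))$; in particular $h$ is symmetric, $\det h=(\det g)^2=1$, and $h_{11}=c_1(g)$, $h_{12}=h_{21}=c_3(g)$. Since $\theta(x)=(x^{T})^{-1}$, we have $\theta(B_{\std})=B_{\std}^{-}$, the Borel subgroup of lower triangular matrices, so $\theta(B)=\theta(g)B_{\std}^{-}\theta(g)^{-1}$. Rewriting $\theta(B)=B$ and using $\theta(g)^{-1}g=g^{T}g=h$, I would first reduce to the equality $hB_{\std}h^{-1}=B_{\std}^{-}$ of closed subgroup schemes of $\SL_{3,R}$.

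Next, I would introduce the permutation matrix $n=\left(\begin{smallmatrix}0&0&1\\0&-1&0\\1&0&0\end{smallmatrix}\right)\in\SL_3(\bZ\left[1/2\right])$, which satisfies $n^{-1}=n$ and $nB_{\std}n^{-1}=B_{\std}^{-}$. Then $hB_{\std}h^{-1}=B_{\std}^{-}$ holds if and only if $nh$ normalizes $B_{\std}$; since the normalizer of $B_{\std}$ in $\SL_3$ is $B_{\std}$ itself (\cite{MR0218363} Corollaire 5.3.12 and Proposition 5.1.3), and this is preserved under the base change to $R$, this amounts to $nh\in B_{\std}(R)$. A direct computation gives
\[
nh=\begin{pmatrix}h_{31}&h_{32}&h_{33}\\-h_{21}&-h_{22}&-h_{23}\\h_{11}&h_{12}&h_{13}\end{pmatrix},
\]
and $\det(nh)=1$. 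Since an upper triangular $3\times 3$ matrix over a commutative ring with determinant $1$ automatically has unit diagonal entries, it lies in $B_{\std}(R)$; hence $nh\in B_{\std}(R)$ if and only if $nh$ is upper triangular, i.e.\ if and only if $h_{21}=h_{11}=h_{12}=0$. By symmetry of $h$ this reduces to $h_{11}=h_{12}=0$, that is, $c_1(g)=c_3(g)=0$.

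The computation in the last step is routine. The points requiring care are purely ring-theoretic: that $\theta$-stability of the subgroup scheme $B$ may be tested via the normalizer statement over the base ring $R$ itself, and that an upper triangular matrix of determinant $1$ already belongs to $B_{\std}(R)$ --- so, unlike in the proofs of Lemma \ref{lem:translation1} and Theorem \ref{thm:openorbit}, no \'etale localization is needed here. I expect the only (still minor) obstacle to be making the equivalence ``$hB_{\std}h^{-1}=B_{\std}^{-}\iff nh\in B_{\std}(R)$'' precise over a general $\bZ\left[1/2\right]$-algebra rather than over a field.
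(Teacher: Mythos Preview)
Your argument is correct and is essentially the same as the paper's proof: the paper writes $B_{\std}^{T}=(-K_3)B_{\std}(-K_3)^{-1}$ and concludes that $\theta$-stability is equivalent to $-K_3g^{T}g$ being upper triangular, then reads off $c_1(g)=c_3(g)=0$ from the entries. Your matrix $n$ plays exactly the role of the paper's $-K_3$ (they differ only by a diagonal element of $B_{\std}$, so ``$nh$ upper triangular'' and ``$-K_3h$ upper triangular'' are the same condition), and your explicit invocation of $N_{\SL_3}(B_{\std})=B_{\std}$ together with $\det(nh)=1$ makes precise the step the paper leaves implicit; the concern you raise at the end is already handled by that scheme-theoretic normalizer statement. (Minor quibble: $n$ is a signed permutation matrix, not a permutation matrix.)
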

	\begin{proof}
		Recall that $B^T_{\std}=(-K_3)B_{\std}(-K_3)^{-1}$. Hence $gB_{\std}g^{-1}$ is $\theta$-stable if and only if $-K_3g^Tg$ is upper triangular. It follows by the direct computation of $K_3g^Tg$ that it is equivalent to the condition that $c_1(g)=c_3(g)=0$.
	\end{proof}
	\begin{property}
		Let $F$ be an algebraically closed field of characteristic diffrent from 2. We say that a flag $V=(0\subset Fv_1\subset V_2\subset F^3)$ satisfies Property (C) if $(v_1,v_1)=0$, and every vector $v\in V_2$ satisfies $(v_1,v)=0$.
	\end{property}
	\begin{lem}\label{lem:flagcriterion2}
		Let $R$ be a $\bZ\left[1/2\right]$-algebra, and $g\in \SL_3(R)$. If $c_1(g)=c_3(g)=0$ then the flags corresponding to all geometric fibers of $B$ satisfy Property (C). The converse holds if $R$ is reduced.
	\end{lem}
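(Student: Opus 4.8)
The plan is to follow the template of Lemma~\ref{lem:flagcriterion1}. First I would reduce to the case where $R=F$ is an algebraically closed field. For the converse direction this rests on the standard fact that in a reduced ring an element vanishes if and only if it vanishes in every residue field (equivalently, in every geometric fiber, since each residue field embeds into its algebraic closure). For the forward direction one does not even need reducedness: the formation of $c_1(g)$ and $c_3(g)$ commutes with base change, so $c_1(g)=c_3(g)=0$ forces $c_1(g_{\bar x})=c_3(g_{\bar x})=0$ for every geometric point $\bar x$ of $\Spec R$, where $g_{\bar x}\in\SL_3(F)$ denotes the pullback of $g$.

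Over $F$ I would identify the flag attached to $B=gB_{\std}g^{-1}$. Since $B_{\std}$ stabilizes the standard flag $0\subset Fe_1\subset Fe_1\oplus Fe_2\subset F^3$, the conjugate $B$ stabilizes its image $V=(0\subset Fv_1\subset Fv_1\oplus Fv_2\subset F^3)$ with $v_i=v_i(g)$, and this is the flag corresponding to $B$. The statement then reduces to the elementary linear-algebra equivalence: $V$ satisfies Property~(C) if and only if $(v_1,v_1)=0$ and $(v_1,v_2)=0$, i.e.\ $c_1(g)=c_3(g)=0$. Indeed, writing a general $v\in V_2$ as $v=av_1+bv_2$ gives $(v_1,v)=a\,c_1(g)+b\,c_3(g)$, so vanishing of $c_1(g)$ and $c_3(g)$ yields $(v_1,v)=0$ for all $v\in V_2$ together with $(v_1,v_1)=c_1(g)=0$; conversely, evaluating at $v=v_1$ and $v=v_2$ recovers $c_1(g)=0$ and $c_3(g)=0$.

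Combining these observations: over an algebraically closed $F$ the two conditions are literally equivalent; for general $R$ the forward implication follows by pulling back to geometric points; and the converse, when $R$ is reduced, follows because Property~(C) at every geometric fiber forces $c_1(g)$ and $c_3(g)$ to vanish in every residue field of $R$, hence in $R$ itself. I do not expect any genuine obstacle here. The only point demanding a little care is the (routine) identification of the flag of $gB_{\std}g^{-1}$ and the bookkeeping of which entries of the bilinear form the invariants $c_1$ and $c_3$ encode, exactly as in Lemma~\ref{lem:flagcriterion1}.
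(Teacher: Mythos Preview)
Your proposal is correct and follows essentially the same approach as the paper: reduce to an algebraically closed field $F$ as in Lemma~\ref{lem:flagcriterion1}, identify the flag of $gB_{\std}g^{-1}$ as $(0\subset Fv_1\subset Fv_1\oplus Fv_2\subset F^3)$, and then check the elementary equivalence between Property~(C) and $c_1(g)=c_3(g)=0$. The paper's version is slightly terser (it first observes that $c_1(g)=0$ holds under both conditions and then reduces to checking $(v_1,v_2)=0\Leftrightarrow (v_1,v)=0$ for all $v\in V_2$), but the content is the same.
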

	\begin{proof}
		By a similar argument to Lemma \ref{lem:flagcriterion1}, we nay assume that $R=F$ is an algebraically closed of characteristic $\neq 2$. Then, the flag corresponding to $gB_{\std}g^{-1}$ is given by $(0\subset Fv_1\subset Fv_1\oplus Fv_2\subset F^3)$. Under the both conditions, we clearly have $c_1(g)=0$. We may, therefore, assume $c_1(g)=0$. Then, it is clear that $(v_1,v_2)=0$ if and only if every vector $v\in Fv_1\oplus Fv_2$ satisfies $(v_1,v)=0$.
	\end{proof}
	\begin{thm}\label{thm:moduliclosed}
		Let $R$ be a reduced $\bZ\left[1/2\right]$-algebra. For a Borel subgroup $B\in\cB_{\SL_3}(R)$, the following conditions are equivalent:
		\begin{enumerate}
			\item[(a)] $B$ belongs to the image of $i_{\clo}$.
			\item[(b)] The flags corresponding to all geometric fibers of $B$ satisfy Property (C).
		\end{enumerate}
	\end{thm}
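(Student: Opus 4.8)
The plan is to mimic the proof of Theorem \ref{thm:modulimiddles}: reduce to the case where $B$ is $\SL_3$-conjugate to $B_{\std}$, and then read off the equivalence from the two preparatory lemmas \ref{lem:translation2} and \ref{lem:flagcriterion2}.

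First I would observe that both conditions are local in the \'etale topology on $\Spec R$. For (a): by Lemma \ref{lem:iclo} the morphism $i_{\clo}$ is a closed immersion, so $B\colon\Spec R\to\cB_{\SL_3}$ lies in its image exactly when it factors through the closed subscheme $\cB^\theta_{\SL_3}$ (see \cite{hayashilinebdl} Lemma 3.1), i.e.\ exactly when $B$ is $\theta$-stable; and whether a morphism to a scheme factors through a given closed subscheme may be checked after a faithfully flat (in particular \'etale) base change on the source. For (b): any \'etale cover $\Spec R'\to\Spec R$ is surjective on geometric points and the flag attached to a geometric fiber is compatible with base change, so the geometric fibers of $B$ over $R$ all satisfy Property (C) if and only if the geometric fibers of $B\otimes_R R'$ do. I would then pass to such a cover over which $B=gB_{\std}g^{-1}$ for some $g\in\SL_3(R)$, which is possible by \cite{MR0218363} Proposition 5.1.3 and Corollaire 5.3.12.

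With $B$ in this standard form, condition (a) becomes the assertion that $B$ is $\theta$-stable, which by Lemma \ref{lem:translation2} is equivalent to $c_1(g)=c_3(g)=0$; and since $R$ is reduced, Lemma \ref{lem:flagcriterion2} tells us this is in turn equivalent to all geometric fibers of $B$ satisfying Property (C), i.e.\ to (b). Concatenating these equivalences proves the theorem. The one point that needs care — and it is exactly the subtlety already present (silently) in Theorem \ref{thm:modulimiddles} — is that Lemma \ref{lem:flagcriterion2} supplies the implication (b)$\Rightarrow$($c_1=c_3=0$) only over reduced rings, so I must check that the \'etale localization used to diagonalize $B$ stays inside $\CAlg_{\bZ\left[1/2\right]}^{\red}$; this holds because an \'etale algebra over a reduced ring is again reduced (\'etale morphisms are flat with geometrically reduced fibers). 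Granting that, the argument is a purely formal assembly of results already in place.
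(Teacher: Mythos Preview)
Your argument is correct and follows essentially the same route as the paper, which simply says the result is proved ``in a similar way to Theorem \ref{thm:modulimiddles}'': pass to an \'etale cover on which $B=gB_{\std}g^{-1}$ and combine Lemma \ref{lem:translation2} with Lemma \ref{lem:flagcriterion2}. Your explicit remark that \'etale extensions of reduced rings remain reduced is a useful clarification that the paper leaves implicit.
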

	\begin{proof}
		This is proved in a similar way to Theorem \ref{thm:modulimiddles}.
	\end{proof}
	\section{$\SO(3)$-Homogeneous Decomposition of $\cB_{\SL_3}$ over $\bZ\left[1/2\right]$}\label{sec:kgb}
	\begin{thm}\label{mainthm}
		\begin{enumerate}
			\renewcommand{\labelenumi}{(\arabic{enumi})}
			\item The set
			\[\{i_{\op}:U\hookrightarrow \cB_{\SL_3},i_1:Z_1\hookrightarrow \cB_{\SL_3},i_2:Z_2\hookrightarrow \cB_{\SL_3},i_{\clo}:\cB_{\SO(3)}\hookrightarrow \cB_{\SL_3}\}\]
			exhibits a set-theoretic decomposition of $\cB_{\SL_3}$.
			\item The subschemes $Z_1$, $Z_2$, and $\cB_{\SO(3)}$ are not $\SO(3)$-orbits, but $\SO(3)$-homogeneous in the \'etale topology in the sense of \cite[Proposition et d\'efintion 6.7.3]{MR0237513}.
		\end{enumerate}
	\end{thm}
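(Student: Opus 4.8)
The plan is to deduce part (1) from Theorem \ref{thm:fieldwisedec}, reducing to a fieldwise statement over algebraically closed fields, and then to read off the fibrewise bijection from the moduli descriptions already established. By Theorem \ref{thm:fieldwisedec} it suffices to show that for every algebraically closed field $F$ over $\bZ[1/2]$ the canonical map $\coprod_\lambda Z_\lambda(F)\to\cB_{\SL_3}(F)$ is a bijection. Each of $i_{\op},i_1,i_2,i_{\clo}$ is a monomorphism (an immersion, by Theorem \ref{thm:openorbit} (3), Theorem \ref{thm:iisimmersion} and Lemma \ref{lem:iclo}), so on $F$-points each is injective and it remains to see that the four images in $\cB_{\SL_3}(F)$ are pairwise disjoint and cover $\cB_{\SL_3}(F)$. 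Since $2\in F^\times$, Theorem \ref{thm:openorbit} (1), Theorem \ref{thm:modulimiddles} and Theorem \ref{thm:moduliclosed} identify these images with the sets of flags over $F$ satisfying Property (O), (LC1), (LC2), (C) respectively (a field is reduced). So everything reduces to the elementary claim that a full flag $(0\subset Fv_1\subset V_2\subset F^3)$ over $F$ satisfies exactly one of (O), (LC1), (LC2), (C). I would prove this by splitting on whether $(v_1,v_1)\in F^\times$ or $(v_1,v_1)=0$: in the first case the existence of a nonzero $v\in V_2$ with $(v_1,v)=(v,v)=0$ separates (LC2) from its negation (O); in the second case the linear functional $v\mapsto(v_1,v)$ on $V_2$ vanishes at $v_1$, and it is either identically zero (Property (C)) or nonzero with kernel exactly $Fv_1$ (Property (LC1)). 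Feeding this back into Theorem \ref{thm:fieldwisedec} yields (1).

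For part (2), I would first observe that $Z_1$, $Z_2$ and $\cB_{\SO(3)}$ have no $\bZ[1/2]$-point, hence are not $\SO(3)$-orbits, since an orbit $\SO(3)/H$ embedded in $\cB_{\SL_3}$ always carries the $\bZ[1/2]$-point image of the identity section. Indeed $Z_1\cong Z_2\cong\Spec\bZ[1/2,x',y',z']/((x')^2+(y')^2+(z')^2+1)$ has no $\bR$-point, and $\cB_{\SO(3)}$ has no $\bR$-point because the real quadratic form $x^2+y^2+z^2$ is anisotropic, so $\SO(3,\bR)$ admits no Borel subgroup over $\bR$; a $\bZ[1/2]$-point composed with $\bZ[1/2]\to\bR$ would produce one.

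It then remains to see that $Z_1$, $Z_2$, $\cB_{\SO(3)}$ are $\SO(3)$-homogeneous in the \'etale topology. Here I would pass to the finite \'etale, faithfully flat cover $\bZ[1/2]\to\bZ[1/2,\sqrt{-1}]$. By the very construction of $Z_1$ and $Z_2$ (Proposition \ref{prop:SO(3)/SO(2)=S^2} together with the Galois descent $A\cong A^\Gamma\otimes_{\bZ[1/2]}\bZ[1/2,\sqrt{-1}]$), there are $\SO(3)$-equivariant isomorphisms $Z_1\otimes_{\bZ[1/2]}\bZ[1/2,\sqrt{-1}]\cong\Spec A\cong\SO(3)/(B_1\cap\SO(3))$ and likewise for $Z_2$, so over $\bZ[1/2,\sqrt{-1}]$ these subschemes become genuine $\SO(3)$-orbits. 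For $\cB_{\SO(3)}$, over $\bZ[1/2,\sqrt{-1}]$ the form $x^2+y^2+z^2$ becomes the split form $uv+w^2$ (set $u=x+\sqrt{-1}y$, $v=x-\sqrt{-1}y$, $w=z$), so $\SO(3)_{\bZ[1/2,\sqrt{-1}]}$ is split reductive, $\cB_{\SO(3)}\otimes_{\bZ[1/2]}\bZ[1/2,\sqrt{-1}]\cong\cB_{\SO(3)_{\bZ[1/2,\sqrt{-1}]}}$ acquires a $\bZ[1/2,\sqrt{-1}]$-point, and hence is $\SO(3)$-equivariantly $\cong\SO(3)/B'$ for a Borel subgroup $B'$ (alternatively, any reductive group scheme is split after a faithfully flat \'etale base change, so its flag scheme is \'etale-locally a genuine orbit). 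Thus each of $Z_1$, $Z_2$, $\cB_{\SO(3)}$ becomes a genuine $\SO(3)$-orbit after an \'etale faithfully flat base change; since $\SO(3)$ is smooth over $\bZ[1/2]$ and these subschemes are smooth of finite presentation over $\bZ[1/2]$ (they are affine or projective, and smoothness descends from $\bZ[1/2,\sqrt{-1}]$), this gives precisely the homogeneity of \cite{MR0237513} Proposition et d\'efinition 6.7.3.

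\textbf{Main obstacle.} The substantive input is entirely carried by the earlier results; the only genuinely delicate points are the bookkeeping required to match the base-changed descriptions above with the exact hypotheses of \cite{MR0237513} Proposition et d\'efinition 6.7.3, and confirming that $\cB_{\SO(3)}$ becomes a split-group flag scheme — hence a genuine orbit — already over $\bZ[1/2,\sqrt{-1}]$, i.e. that the integral lattice $x^2+y^2+z^2$ splits over $\bZ[1/2,\sqrt{-1}]$ and not merely over its fraction field. The flag dichotomy used in part (1) is routine.
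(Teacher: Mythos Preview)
Your proposal is correct and follows essentially the same route as the paper's proof: for (1) you reduce via Theorem~\ref{thm:fieldwisedec} to algebraically closed fields, invoke the moduli descriptions (Theorems~\ref{thm:openorbit}, \ref{thm:modulimiddles}, \ref{thm:moduliclosed}), and observe that every full flag satisfies exactly one of (O), (LC1), (LC2), (C); for (2) you argue absence of $\bZ[1/2]$-points and \'etale-local existence of a section, exactly as the paper does. The paper's own proof is in fact much terser than yours---for the flag dichotomy it merely says ``every flag of $F^3$ has only one of Property (O), (LC1), (LC2), and (C)'', and for homogeneity it says only that it is ``evident by the constructions''---so your write-up fills in details the author left implicit. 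Your worry about whether $\SO(3)$ splits integrally over $\bZ[1/2,\sqrt{-1}]$ is legitimate but not strictly needed: the flag scheme $\cB_G$ of any reductive group scheme $G$ is \'etale-locally an orbit by general SGA3 results (a reductive group is \'etale-locally split), so homogeneity of $\cB_{\SO(3)}$ holds without pinning down the specific cover.
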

	
	\begin{proof}
		For (1), let $F$ be an algebraically closed field of characteristic not equal to 2. Identify $\cB_{\SL_3}(F)$ with the set of full flags of $F^3$. Notice that every flag of $F^3$ has only one of Property (O), (LC1), (LC2), and (C). The assertion now follows from Theorem \ref{thm:openorbit}, Theorem \ref{thm:modulimiddles}, Theorem \ref{thm:moduliclosed}, and Theorem \ref{thm:fieldwisedec}.
		
		The first part of (2) follows since these three subschemes do not admit $\bZ\left[1/2\right]$-points. The latter assertion of (2) is evident by the constructions of these three subschemes.
	\end{proof}

	\appendix
	\section{Descent Technique}\label{sec:descent}
	In this section, we develop the descent results we use in this paper. The first result is easy:
	
	\begin{lem}\label{lem:ffreduction}
		Let $f:R\to S$ be a faithfully flat homomorphism of commutative rings.
		\begin{enumerate}
			\renewcommand{\labelenumi}{(\arabic{enumi})}
			\item An element $r\in R$ is zero if and only if $f(r)=0$.
			\item An element $r\in R$ belongs to $R^\times$ if and only if $f(r)\in S^\times$.
		\end{enumerate}
	\end{lem}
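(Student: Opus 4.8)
The plan is to deduce both statements from the single basic fact that for a faithfully flat ring map $f\colon R\to S$ an $R$-module $M$ vanishes as soon as $M\otimes_R S=0$ (equivalently, that the functor $-\otimes_R S$ reflects isomorphisms of $R$-modules). Since the ``only if'' directions are immediate — $f$ is a ring homomorphism, hence preserves $0$ and carries units to units — all the content lies in the two converses, and in each case the trick is to package the condition on $r$ as a statement about the module $R/rR$.

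For part (1), assume $f(r)=0$. I would form $M=R/rR$ together with the quotient map $q\colon R\twoheadrightarrow M$. Base changing along $f$ and using right exactness of the tensor product, one identifies $M\otimes_R S$ with $S/f(r)S=S$ and $q\otimes_R\id_S$ with the identity of $S$; in particular $q\otimes_R\id_S$ is an isomorphism. By faithfully flat descent for morphisms of modules, $q$ is then an isomorphism, whence $rR=\Ker q=0$ and $r=0$. (One could equally tensor the exact sequence $0\to\Ann_R(r)\to R\xrightarrow{\,\cdot r\,}R$ and argue in the same manner; I would present whichever is shorter.) For part (2), assume $f(r)\in S^\times$ and again set $M=R/rR$. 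Then $M\otimes_R S=S/f(r)S=0$ because $f(r)$ is a unit, so faithful flatness forces $M=0$, i.e.\ $rR=R$; writing $1=rs$ with $s\in R$ exhibits $r$ as a unit since $R$ is commutative.

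I do not anticipate a genuine obstacle here: the only points requiring a little care are invoking the correct form of faithfully flat descent (vanishing of modules, or reflection of isomorphisms) and checking that the base change of the map in question is literally the map claimed. Both are routine and can be dispatched in a few lines.
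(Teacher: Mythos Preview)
Your argument is correct. The paper's proof, however, proceeds along slightly different lines. For (1) it simply observes that $f$ is injective by invoking the canonical equalizer sequence $R\to S\rightrightarrows S\otimes_R S$ associated to a faithfully flat map, rather than passing through $R/rR$ and reflection of isomorphisms. For (2) it uses the topological incarnation of faithful flatness: the induced map $\Spec S\to\Spec R$ is surjective, so for any prime $\fp\subset R$ one may choose a prime $\fq\subset S$ with $f^{-1}(\fq)=\fp$; since $f(r)$ is a unit it lies outside $\fq$, whence $r\notin\fp$, and as $\fp$ was arbitrary $r$ is a unit. Your uniform module-theoretic treatment via $M=R/rR$ has the virtue of handling both parts with a single tool and of being entirely self-contained once one grants that $-\otimes_R S$ reflects zero objects; the paper's argument for (2) trades this for the more concrete fact about lifting primes, which some readers may find more transparent. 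Either route is perfectly adequate here.
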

	
	\begin{proof}
		The ``only if'' direction in each assertion is obvious. The ``if'' direction of (1) follows since $f$ is injective. In fact, we have a canonical equalizer sequence
		\[R\overset{f}{\to} S\rightrightarrows S\otimes_R S.\]
		
		We next prove the ``if'' direction of (2). Suppose that we are given an element $r\in R$ such that $f(r)\in S^\times$. To see $r\in R^\times$, it will suffice to show $r\not\in \fp$ for every prime ideal $\fp$ of $R$. Since $f$ is faithfully flat, there exists a prime ideal $\fq\subset S$ such that $f^{-1}(\fq)=\fp$. Since $f(r)$ is a unit of $S$, we have $f(r)\not\in \fq$. We thus obtain $r\not\in f^{-1}(\fq)=\fp$ as desired. This completes the proof.
	\end{proof}
	
	In this paper, we try to find a smaller ring of definition of schemes and morphisms. Firstly, let us give some remarks on the Galois descent of morphisms.
	
	\begin{lem}\label{lem:meaningofdescent}
		Let $k\to k'$ be a faithfully flat homomorphism of commutative rings, $X$ be a $k$-space, and $Y$ be a $k$-sheaf in the fpqc topology. Then, for a morphism $f':X\otimes_k k'\to Y\otimes_k k'$ of $k'$-schemes, there is at most unique morphism $f:X\to Y$ such that $f'=f\otimes_k k'$.
	\end{lem}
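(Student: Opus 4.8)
The plan is to work entirely with functors of points and to reduce the uniqueness to the injectivity of a restriction map along a faithfully flat ring extension. So suppose $f_1,f_2\colon X\to Y$ are two morphisms of $k$-spaces with $f_1\otimes_k k' = f' = f_2\otimes_k k'$; I want to show $f_1 = f_2$, which by the Yoneda description of morphisms means $f_{1,R}(x) = f_{2,R}(x)$ in $Y(R)$ for every $k$-algebra $R$ and every $x\in X(R)$.

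First I would introduce the faithfully flat extension $R \to R'\coloneqq R\otimes_k k'$ (faithfully flat since it is the base change of $k\to k'$ along $k\to R$) and compare $f_{1,R}(x)$ and $f_{2,R}(x)$ after applying $Y(R\to R')$. By the definition of base change of copresheaves on $\CAlg_k$ recalled in Section \ref{sec:notation}, when we evaluate at the $k'$-algebra $R'$ we have $(X\otimes_k k')(R') = X(R')$ and $(Y\otimes_k k')(R') = Y(R')$, with $R'$ regarded as a $k$-algebra through $k\to k'\to R'$, and the component of $f_i\otimes_k k'$ at $R'$ is precisely the component $f_{i,R'}$ of $f_i$. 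Hence the hypothesis gives $f_{1,R'} = f_{2,R'}$ as maps $X(R')\to Y(R')$. Now naturality of each $f_i$ applied to the $k$-algebra homomorphism $R\to R'$ yields
\[Y(R\to R')\big(f_{i,R}(x)\big) = f_{i,R'}\big(X(R\to R')(x)\big),\]
and the right-hand side does not depend on $i$; therefore $f_{1,R}(x)$ and $f_{2,R}(x)$ have the same image in $Y(R')$.

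To finish, I would invoke the sheaf property of $Y$ for the fpqc cover $\Spec R'\to\Spec R$: the restriction map $Y(R)\to Y(R')$ is the equalizer of the two maps $Y(R')\rightrightarrows Y(R'\otimes_R R')$, hence injective, so $f_{1,R}(x) = f_{2,R}(x)$, as desired. I do not expect a genuine obstacle here — the argument is a short diagram chase — the only points requiring care are the bookkeeping between the two base categories $\CAlg_k$ and $\CAlg_{k'}$ when identifying $f_i\otimes_k k'$ with the family of components $f_{i,R'}$ of $f_i$, and the observation that only the target $Y$ needs to be an fpqc sheaf while $X$ may be an arbitrary $k$-space.
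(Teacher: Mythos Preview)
Your proposal is correct and follows essentially the same approach as the paper: both arguments reduce to the injectivity of $Y(R)\to Y(R\otimes_k k')$ obtained from the fpqc equalizer sequence for $Y$, combined with naturality of $f_1,f_2$ along $R\to R\otimes_k k'$. Your write-up is slightly more explicit about the identification $(f_i\otimes_k k')_{R'}=f_{i,R'}$, but the logic is the same.
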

	
	We remark that if $k\to k'$ is \'etale, $Y$ can be an \'etale $k$-sheaf.
	
	\begin{proof}
		Let $g$ be another morphism satisfying $f'=g\otimes_k k'$. For each commutative $k$-algebra $R$, the canonical homomorphism $R\to R\otimes_k k'$ is faithfully flat. Since
		\[Y(R)\to Y(R\otimes_k k')\rightrightarrows Y((R\otimes_k k')\otimes_{R} (R\otimes_k k'))\]
		is an equalizer sequence, $Y(R)\to Y(R\otimes_k k')$ is injective. The uniqueness now follows from the commutative diagram
		\[\begin{tikzcd}
			X(R)\arrow[r]\arrow[d,shift left=0.7ex, "g_T"]
			\arrow[d,shift right=0.7ex, "f_R"']
			&X(R\otimes_k k')\arrow[d, "f_{R\otimes_k k'}=g_{R\otimes_k k'}=f'_{R\otimes_k k'}"]\\
			Y(R)\arrow[r, hook]&Y(R\otimes_k k').
		\end{tikzcd}\]
	\end{proof}
	
	The descent of $f'$ is to find the unique morphism $f$ in the above lemma. The next result enables us to descend rings of definition of morphisms and their domains simultaneously.
	
	\begin{thm}\label{thm:galoisdescent}
		Let $i:k\to k'$ be a Galois extension of commutative rings of Galois group $\Gamma$. Let $S$ be a commutative ring, equipped with an action of $\Gamma$ and a $\Gamma$-equivariant ring homomorphism $g:k'\to S$. We denote $R=S^\Gamma$, and $j:R\to S$ be the canonical embedding. In particular, $g\circ i:k\to S$ is factorized into $j\circ f$ for a unique homomorphism $f:k\to R$. Let $X$ be an \'etale $k$-sheaf.
		\begin{enumerate}
			\renewcommand{\labelenumi}{(\arabic{enumi})}
			\item The homomorphism $j:R\to S$ is a Galois extension of Galois group $\Gamma$.
			\item Put an action of $\Gamma$ on $R\otimes_k k'$ by the base change. Then, we have a canonical $\Gamma$-equivariant isomorphism $R\otimes_k k'\cong S$ which we denote by $(j,g)$. In particular, the canonical homomorphism $R\to R\otimes_k k'$ is a Galois extension of Galois group $\Gamma$ for this action.
			\item Let $\alpha\in X(S)^\Gamma\subset X(S)=(X\otimes_k k')(S)$. Let $\bar{\alpha}\in X(R)$ be an element satisfying $X(j)(\bar{\alpha})=\alpha$ which uniquely exists by Galois descent. We denote the corresponding morphisms $\alpha:\Spec S\to X\otimes_k k'$ and $\Spec R\to X$ by the same symbols $\alpha$ and $\bar{\alpha}$ respectively. Then, the composite map
			\[\Spec R\otimes_k k'\overset{(j,g)^{-1}}{\cong} \Spec S\overset{\alpha}{\to} X\otimes_k k'\]
			coincides with the base change of $\bar{\alpha}$.
		\end{enumerate}
	\end{thm}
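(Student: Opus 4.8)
The plan is to reduce all three parts to a single isomorphism and then invoke standard Galois descent. The central object is the homomorphism of $k$-algebras $\Psi\colon R\otimes_k k'\to S$, $r\otimes x\mapsto j(r)g(x)$, where $\Gamma$ acts on $R\otimes_k k'$ through the second factor. First I would observe that $\Psi$ is $\Gamma$-equivariant: for $r\in R$ we have $\sigma(r)=r$ because $R=S^\Gamma$, and $g$ is $\Gamma$-equivariant by hypothesis, so $\sigma(j(r)g(x))=j(r)g(\sigma(x))=\Psi(\sigma(r\otimes x))$. The crux is that $\Psi$ is an isomorphism. Here I would use that, $i\colon k\to k'$ being a $\Gamma$-Galois extension, it is finite \'etale and faithfully flat and $k'\otimes_k k'\xrightarrow{\sim}\prod_{\sigma\in\Gamma}k'$ via $x\otimes y\mapsto(x\sigma(y))_\sigma$; equivalently, base change along $k\to k'$ is an equivalence from $k$-modules to $k'$-modules with a semilinear $\Gamma$-action, with quasi-inverse $N\mapsto N^\Gamma$, and this equivalence is monoidal, hence passes to commutative algebras. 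Viewing $S$ as a $k'$-algebra through $g$, its $\Gamma$-action is semilinear by the computation above, so $S$ descends, and the structure isomorphism $k'\otimes_k S^\Gamma\xrightarrow{\sim}S$ of the descent is --- after the harmless flip $k'\otimes_k R\cong R\otimes_k k'$ --- exactly $\Psi$. Thus $\Psi$ is a $\Gamma$-equivariant ring isomorphism.

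Parts (1) and (2) follow quickly. The map $\Psi$ is the asserted canonical isomorphism of (2); it restricts to $j$ on $R\otimes 1$ and to $g$ on $1\otimes k'$, whence the notation $(j,g)$. That $R\to R\otimes_k k'$ is a $\Gamma$-Galois extension for the base-change action is the stability of Galois extensions under base change: tensoring $k'\otimes_k k'\xrightarrow{\sim}\prod_\Gamma k'$ with $R$ over $k$ gives $(R\otimes_k k')\otimes_R(R\otimes_k k')\xrightarrow{\sim}\prod_\Gamma(R\otimes_k k')$, and faithful flatness is preserved by base change. Transporting along $\Psi$, the homomorphism $j\colon R\to S$ is identified with $R\to R\otimes_k k'$, so it too is a $\Gamma$-Galois extension; this is (1).

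For (3) I would first recall the existence and uniqueness of $\bar\alpha$: since $\Gamma$ is finite, $j\colon R\to S$ is an \'etale covering, and by (1) we have $S\otimes_R S\cong\prod_{\sigma\in\Gamma}S$, under which the two maps $X(S)\rightrightarrows X(S\otimes_R S)$ become $x\mapsto(x)_\sigma$ and $x\mapsto(\sigma\cdot x)_\sigma$; as $X$ is an \'etale sheaf, the equalizer $X(R)\hookrightarrow X(S)$ is the inclusion of $X(S)^\Gamma$, so $\alpha$ has a unique preimage $\bar\alpha$ (compare Lemma \ref{lem:meaningofdescent}). The remaining identity is a diagram chase. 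The base change of $\bar\alpha\colon\Spec R\to X$ along $k\to k'$ is, inside $(X\otimes_k k')(R\otimes_k k')=X(R\otimes_k k')$, the image of $\bar\alpha$ under $X(R)\to X(R\otimes_k k')$ induced by $r\mapsto r\otimes 1$. Applying the bijection $X((j,g))\colon X(R\otimes_k k')\xrightarrow{\sim}X(S)$ and using $(j,g)\circ(r\mapsto r\otimes1)=j$, this image goes to $X(j)(\bar\alpha)=\alpha$; equivalently $X((j,g)^{-1})(\alpha)$ is the base change of $\bar\alpha$, which is exactly the assertion that $\Spec(R\otimes_k k')\xrightarrow{(j,g)^{-1}}\Spec S\xrightarrow{\alpha}X\otimes_k k'$ agrees with the base change of $\bar\alpha$.

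The main obstacle is organizational rather than conceptual: one must keep the various $\Gamma$-actions, the flip between $R\otimes_k k'$ and $k'\otimes_k R$, and the translation between the scheme-theoretic base change of $\bar\alpha$ and its description on $R$-points all consistent. The genuine mathematical inputs --- the Galois-descent equivalence, base-change stability of Galois extensions, and ``finite Galois implies \'etale covering'' --- are classical, so once $\Psi$ is shown to be an isomorphism, (1)--(3) are formal.
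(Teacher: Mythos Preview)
Your proposal is correct and follows essentially the same route as the paper. The paper simply cites G\"ortz--Wedhorn for (1) and (2), whereas you unpack the standard Galois-descent argument (construct $\Psi=(j,g)$, use the equivalence between $k$-modules and $k'$-modules with semilinear $\Gamma$-action to see $\Psi$ is an isomorphism, and deduce (1) by base-change stability of Galois extensions); for (3) your computation $X((j,g))\circ X(l)(\bar\alpha)=X(j)(\bar\alpha)=\alpha$ is verbatim the paper's Yoneda argument.
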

	\begin{proof}
		Part (1) and (2) are proved in a similar way to \cite[Theorems 14.86 and 14.85]{MR4225278}. 
		
		For (3), it will suffice to compare the images of $\id_{R\otimes_k k'}\in (\Spec R\otimes_k k')(R\otimes_k k')$ by the Yoneda lemma. The image along the map constructed in (3) is $X((j,g))^{-1}(\alpha)\in X(R\otimes_k k')=(X\otimes_k k')(R\otimes_k k')$. Let $l:R\to R\otimes_k k'$ denote the canonical map. Then, the image along $\bar{\alpha}\otimes_k k'$ is computed as
		\[X(l)(\bar{\alpha})
		=X((j,g))^{-1}X(j)(\bar{\alpha})
		=X((j,g))^{-1}(\alpha).\]
		Hence the two images coincide. This completes the proof.
	\end{proof}
	
	For the relation of descent of spaces and actions of groups, the following result is useful:
	
	\begin{prop}\label{prop:descentofaction}
		Let $k\to k'$ be a faithfully flat ane \'etale homomorphism of commutative rings. Let $i:X\to Y$ be a monomorphism of \'etale $k$-sheaves, and $K$ be a group \'etale $k$-sheaf. Suppose that $K$ acts on $G$. If the induced action of $K\otimes_k k'$ on $Y\otimes_k k'$ restricts to $X\otimes_k k'$, the action of $K$ on $Y$ restricts to $X$. Moreover, the base change of the resulting action on $X$ coincides with the given action on $X\otimes_k k'$.
	\end{prop}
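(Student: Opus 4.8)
The plan is to run a faithfully flat descent argument entirely inside the topos of \'etale $k$-sheaves, making essential use of the fact that $i$ is a monomorphism. Write $a\colon K\times Y\to Y$ for the given action. Since $i$ is a monomorphism of \'etale $k$-sheaves it is injective on sections, i.e.\ $i_R\colon X(R)\hookrightarrow Y(R)$ for every $k$-algebra $R$ (monomorphisms in a Grothendieck topos are precisely the sectionwise monomorphisms, and fibered products of \'etale sheaves are computed sectionwise). Consequently, the statement ``the action of $K$ on $Y$ restricts to $X$'' is equivalent to the purely set-theoretic assertion that for every $k$-algebra $R$, every $g\in K(R)$, and every $x\in X(R)$, the element $a_R(g,i_R(x))\in Y(R)$ lies in the image of $i_R$. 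Granting this, the assignments $(g,x)\mapsto i_R^{-1}(a_R(g,i_R(x)))$ are forced to be natural in $R$ (preimages along $i_R$ are unique), hence assemble into a morphism of \'etale $k$-sheaves $\bar a\colon K\times X\to X$ with $i\circ\bar a=a\circ(\id_K\times i)$; and the associativity and unit axioms for $\bar a$ follow from those for $a$ by post-composing with the monomorphism $i$.

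First I would fix $R$, $g\in K(R)$, $x\in X(R)$ and set $y\coloneqq a_R(g,i_R(x))\in Y(R)$. Put $R'\coloneqq R\otimes_k k'$; since $k\to k'$ is faithfully flat and \'etale, so is $R\to R'$, so $\{R\to R'\}$ is a covering family in the \'etale topology. Viewing $R'$ as a $k'$-algebra, the hypothesis supplies a restricted action $\bar a'$ of $K\otimes_k k'$ on $X\otimes_k k'$ satisfying $(i\otimes_k k')\circ\bar a'=(a\otimes_k k')\circ(\id\times(i\otimes_k k'))$; evaluating this identity on $R'$ and recalling that base change of \'etale sheaves is sectionwise restriction shows that $x''\coloneqq\bar a'_{R'}(g|_{R'},x|_{R'})\in X(R')$ satisfies $i_{R'}(x'')=y|_{R'}$, where $y|_{R'}$ denotes the image of $y$ in $Y(R')$. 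Next I would descend $x''$ to $X(R)$: as $X$ is an \'etale sheaf and $\{R\to R'\}$ is a cover, it suffices to check that the two restrictions $x_1'',x_2''\in X(R'\otimes_R R')$ of $x''$ along the two coprojections coincide. Applying $i$ over $R'\otimes_R R'$, both $i(x_1'')$ and $i(x_2'')$ equal the single restriction of $y$ (which is already defined over $R$) to $R'\otimes_R R'$, so $i(x_1'')=i(x_2'')$, whence $x_1''=x_2''$ by injectivity of $i$ on sections. Thus there is a unique $\tilde x\in X(R)$ with $\tilde x|_{R'}=x''$, and since $Y(R)\to Y(R')$ is injective by \'etale descent, $i_R(\tilde x)|_{R'}=i_{R'}(x'')=y|_{R'}$ forces $i_R(\tilde x)=y$. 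Hence $y$ is in the image of $i_R$, and the action of $K$ restricts to $X$.

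Finally, for the compatibility with base change, I would observe that $\bar a\otimes_k k'$ satisfies $(i\otimes_k k')\circ(\bar a\otimes_k k')=(a\otimes_k k')\circ(\id\times(i\otimes_k k'))$, the same equation characterizing the given restricted action $\bar a'$; since $i\otimes_k k'$ is still a monomorphism (base change preserves sectionwise injectivity), the uniqueness of factorizations through a monomorphism gives $\bar a\otimes_k k'=\bar a'$. I expect the only genuinely delicate point to be the cocycle check in the second paragraph --- verifying that the sectionwise preimage produced over $R'$ glues over $R'\otimes_R R'$ --- but this reduces directly to the injectivity of $i$ on sections over $R'\otimes_R R'$, which is exactly what the monomorphism hypothesis provides. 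In particular, no representability of $X$, $Y$, or $K$ is used anywhere; the whole argument stays within the category of \'etale $k$-sheaves.
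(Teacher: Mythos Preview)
Your proposal is correct and follows essentially the same route as the paper's own proof: both arguments fix $R$, pass to $R'=R\otimes_k k'$, use the given restricted action over $k'$ to produce a section of $X$ over $R'$, verify the cocycle condition over $R'\otimes_R R'$ by pushing through the monomorphism $i$ and using that the target already descends to $R$, invoke the \'etale sheaf property of $X$ to descend, and then use injectivity of $Y(R)\to Y(R')$ to confirm the descended element hits $y$. The compatibility with base change is handled identically via uniqueness through the monomorphism $i$.
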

	\begin{proof}
		We denote the action map $K\times Y\to Y$ (resp.\ $(K\otimes_k k')\times (X\otimes_k k')\to X\otimes_k k'$) by $\psi$ (resp.\ $\phi'$). Let $R$ be a $k$-algebra. Write $l:R\to R\otimes_k k'$ for the canonical homomorphism. For $j\in\{1,2\}$, let $\iota_j:R\otimes_k k'\to (R\otimes_k k')\otimes_R (R\otimes_k k')$ be the canonical homomorphism onto the $j$th factor. Take $g\in K(R)$ and $x\in X(R)$. We check the descent condition for $\phi'\circ(K\times X)(l)(g,x)$ along $l$. Observe that for $j\in\{1,2\}$, we have
		\[\begin{split}
			i\circ X(\iota_j)\circ\phi'\circ(K\times X)(l)(g,x)&=Y(\iota_j)\circ i\circ\phi'\circ(K\times X)(l)(g,x)\\
			&=Y(\iota_j)\circ \psi\circ (K\times i)\circ (K\times X)(l)(g,x)\\
			&=\psi\circ(K\times Y)(\iota_j)\circ (K\times i)\circ(K\times X)(l)(g,x)\\
			&=\psi\circ(K\times i)\circ (K\times X)(\iota_j)\circ(K\times X)(l)(g,x)\\
			&=\psi\circ(K\times i)\circ (K\times X)(\iota_j\circ l)(g,x).
		\end{split}\]
		We used the hypothesis that $i\otimes_k k'$ is $K\otimes_k k'$-equivariant for the second equality. Since $\iota_1\circ l=\iota_2\circ l$, we have
		\[\begin{split}
			i\circ X(\iota_1)\circ\phi'\circ(K\times X)(l)(g,x)&=\psi\circ(K\times i)\circ (K\times X)(\iota_1\circ l)(g,x)\\
			&=\psi\circ(K\times i)\circ (K\times X)(\iota_2\circ l)(g,x)\\
			&=i\circ X(\iota_2)\circ\phi'\circ(K\times X)(l)(g,x).
		\end{split}\]
		Since $i$ is monic, we have $X(\iota_1)\circ\phi'\circ(K\times X)(l)(g,x)=X(\iota_2)\circ\phi'\circ(K\times X)(l)(g,x)$. Since $X$ is an \'etale sheaf, there is a unique element $\phi(g,x)\in X(R)$ such that $X(l)(\phi(g,x))=\phi'\circ(K\times X)(l)(g,x)$. To see that this gives the restriction of the action of $K$ on $Y$ to $X$, notice that
		\[\begin{split}
			Y(l)\circ i(\phi(g,x))
			&=i\circ X(l)(\phi(g,x))\\
			&=i\circ\phi'\circ(K\times X)(l)(g,x)\\
			&=\psi\circ (K\times i)\circ(K\times X)(l)(g,x)\\
			&=\psi\circ (K\times Y)(l)\circ(K\times i)(g,x)\\
			&=Y(l)\circ\psi\circ (K\times i)(g,x)\\
			&=Y(l)\circ\psi(g,i(x)).
		\end{split}\]
		Since $l$ is \'etale and faithfully flat, $Y(l)$ is injective. Therefore, we get the equality $i(\phi(g,x))=\psi(g,i(x))$ as desired. In particular, $\phi$ determines the restriction of the action of $\psi$ (use the hypothesis that $i$ is monic for the naturality of $\phi$).
		
		The proof is completed by showing $\phi'=\phi\otimes_k k'$. Let $R'$ be a $k'$-algebra, $g\in(K\otimes_k k')(R')=K(R')$, and $x\in(X\otimes_k k')(R')=X(R')$. Then, we have
		\[i(\phi'(g,x))=\psi\circ(K\times i)(g,x)
		=\psi(g,i(x))
		=i(\phi(g,x)),\]
		Since $i$ is monic, we have $\phi'(g,x)=\phi(g,x)$.
	\end{proof}
	In Section \ref{sec:middleorbit}, we consider orbit sheaves in the scheme $\cB_{\SL_3}$. To compute their moduli description, the following observation is useful since many objects are local in the \'etale topology:
	\begin{lem}\label{lem:imisetalelocal}
		Let $k$ be a commutative ring, and $f:R\to R'$ be a faithfully flat \'etale homomorphism of $k$-algebras. Let $i:\cF\to \cG$ be a monomorphism of \'etale $k$-sheaves, and $y\in\cG(R)$. Then, $y$ belongs to the image of $i$ if and only if $\cG(f)(y)$ is so.
	\end{lem}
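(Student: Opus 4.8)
The plan is to argue directly from the \'etale sheaf axioms, in the same spirit as Corollary \ref{cor:basechangeofdecomposition} and Proposition \ref{prop:descentofaction}. First I would record two preliminary observations. Since $i$ is a monomorphism of \'etale sheaves, every component map $i_T\colon\cF(T)\to\cG(T)$ is injective, so for $z\in\cG(T)$ the phrase ``$z$ belongs to the image of $i$'' means precisely that $z=i_T(x)$ for a (then unique) $x\in\cF(T)$. Secondly, since $f$ is \'etale and faithfully flat, $\Spec R'\to\Spec R$ is \'etale and surjective, so $\{R\to R'\}$ is a covering in the \'etale topology and the sheaf conditions for $\cF$ and $\cG$ apply to it; in particular $\cG(f)$ is injective.

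The ``only if'' direction should be immediate from naturality of $i$: if $y=i_R(x)$ then $\cG(f)(y)=i_{R'}(\cF(f)(x))$, so $\cG(f)(y)$ is in the image of $i$.

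For the converse, suppose $\cG(f)(y)=i_{R'}(x')$ with $x'\in\cF(R')$. I would first check that $x'$ satisfies the descent (cocycle) condition along $R\to R'$: writing $\iota_1,\iota_2\colon R'\to R'\otimes_R R'$ for the two structure maps, naturality of $i$ gives $i_{R'\otimes_R R'}(\cF(\iota_j)(x'))=\cG(\iota_j\circ f)(y)$ for $j\in\{1,2\}$, and $\iota_1\circ f=\iota_2\circ f$, so injectivity of $i_{R'\otimes_R R'}$ forces $\cF(\iota_1)(x')=\cF(\iota_2)(x')$. The sheaf property of $\cF$ then yields a unique $x\in\cF(R)$ with $\cF(f)(x)=x'$. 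Finally I would compare $i_R(x)$ and $y$ after restriction to $R'$: $\cG(f)(i_R(x))=i_{R'}(\cF(f)(x))=i_{R'}(x')=\cG(f)(y)$, and since $\cG(f)$ is injective we conclude $i_R(x)=y$, i.e.\ $y$ lies in the image of $i$.

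I do not expect any serious obstacle: the argument is purely formal once one knows that monomorphisms of sheaves are sectionwise injective and that $\{R\to R'\}$ is a genuine \'etale cover. The only mild care needed is bookkeeping of which maps are being applied, so that the identity $\iota_1\circ f=\iota_2\circ f$ is invoked at exactly the right step, and noting that the uniqueness in the descent for $\cF$ is what makes the resulting $x$ canonical.
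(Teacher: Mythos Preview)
Your proposal is correct and follows essentially the same argument as the paper: both verify the descent condition for $x'$ along $R\to R'$ using naturality of $i$ and the identity $\iota_1\circ f=\iota_2\circ f$, then invoke the sheaf property of $\cF$ to descend to $x\in\cF(R)$, and finally use injectivity of $\cG(f)$ to conclude $i(x)=y$. The only difference is that you spell out the preliminary observations (sectionwise injectivity of monomorphisms, $\{R\to R'\}$ being an \'etale cover) more explicitly than the paper does.
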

	\begin{proof}
		The ``only if'' direction is clear. To see the ``if'' direction, suppose that we are given an element $x'\in\cF(R')$ such that $i(x')=\cG(f)(y)$. For $j\in\{1,2\}$, let $\iota_j$ denote the caonical homomorphism $R'\to R'\otimes_R R'$ onto the $j$th factor. Then, we have
		\[\begin{array}{cccc}
			i(\cF(\iota_1)(x'))&=\cG(\iota_1)(i(x'))
			&=\cG(\iota_1\circ f)(y)
			&=\cG(\iota_2\circ f)(y)\\
			&=\cG(\iota_2)(i(x'))
			&=i(\cF(\iota_2)(x')).
		\end{array}\]
		This implies $\cF(\iota_1)(x')=\cF(\iota_2)(x')$ since $i$ a monomorphism. Since $\cF$ is an \'etale $k$-sheaf, there is a (unique) element $x\in\cF(R)$ such that $\cF(f)(x)=x'$. The proof is completed by showing $i(x)=y$. This follows from the equality
		\[\cG(f)(i(x))=i(\cF(f)(x))=i(x')=\cG(f)(y)\]
		since $\cG$ is an \'etale $k$-sheaf.
	\end{proof}
	\section{$\cB_{\SO(3)}$ as a Subspace of $\bP^2$}\label{sec:flagschofso(3)}
	In \cite{MR0218363}, the moduli space $\mathcal B_{\SO(3)}$ of Borel subgroups of $\SO(3)$ is proved to be represented by a projective scheme over $\bZ\left[1/2\right]$. The key result for its proof is that the moduli space of subgroups of $\SO(3)$ of type (R) is represented by a quasi-projective scheme. In this appendix, we realize $\mathcal B_{\SO(3)}$ as a moduli subspace of $\bP^2=\Proj\bZ\left[1/2,x,y,z\right]$ by using ideas in this paper. To be precise, set $Z'=\Proj\bZ\left[1/2,x,y,z\right]/(x^2+y^2+z^2)\subset\bP^2$. We establish an $\SO(3)$-equivariant isomorphism $\cB_{\SO(3)}\cong Z'$. We also show that $|\bP^2|=|Z'|\coprod |U'|$ is a $\bZ\left[1/2\right]$-form of the $\SO(3)$-orbit decomposition of $\bP^2$ over $\bZ\left[1/2,\sqrt{-1}\right]$, where $U'=\bP^2\setminus Z'$.
	
	To achieve them, let us recall a moduli description of $\bP^2$: for a commutative $\bZ\left[1/2\right]$-algebra $R$, $\bP^2(R)$ is naturally bijective to the set of equivalence classes of line bundles $\cL$ on $\Spec R$ with generators $(a_1,a_2,a_3)$. If $\cL$ is the structure sheaf $\cO_{\Spec R}$ of $\Spec R$, we will denote it by
	$[a_1~a_2~a_3]^T$.
	
	\begin{property}
		Let $F$ be an algebraically closed field of characteristic different from 2. We say that a one dimensional subspace $V=Fv\subset F^3$ satisfies Property (O)' (resp.\ (C)') if $(v,v)\neq 0$ (resp.\ $(v,v)= 0$).
	\end{property}
	Notice that for a field $F$ (of characteristic $\neq 2$), $F$-points of $\bP^2$ are identified with one dimensional subspaces of $F^3$ by the correspondence
	\[[a_1~a_2~a_3]^T\leftrightarrow F(a_1~a_2~a_3)^T.\]
	The open subscheme $U'$ is the moduli space of one dimensional subspaces $V\subset F^3$ satisfying Property (O)'. That is, for a $\bZ\left[1/2\right]$-algebra $R$,
	\[U'(R)=\{\cV\in\bP^2(R):\ {\rm the\ geometric\ fibers\ of\ }\cV{\rm \ satisfy\ (O)'}\}.\]
	Since the condition (O)' is stable under the action of $\SO(3)$, $U'$ is an $\SO(3)$-invariant open subscheme of $\bP^2$.
	
	\begin{thm}
		There is an $\SO(3)$-equivariant isomorphism
		$\SO(3)/\SO(2)\cong U'$.
	\end{thm}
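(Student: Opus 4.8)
The plan is to realise $U'$ as a quotient of $\SO(3)$ by exhibiting an explicit orbit morphism and then invoking the étale–quotient formalism already used in the proof of Proposition~\ref{prop:SO(3)/SO(2)=S^2}. I would take as base point the line $[e_3]=[0:0:1]\in U'(\bZ\left[1/2\right])$, which lies in $U'$ because $(e_3,e_3)=1$ witnesses Property (O)$'$, and consider the orbit morphism $\SO(3)\to U'$, $g\mapsto g\cdot[e_3]=[v_3(g)]$, sending $g$ to the class of its third column. The aim is to show that this morphism exhibits $U'$ as $\SO(3)/\SO(2)$, with $\SO(2)\subset\SO(3)$ embedded as in Proposition~\ref{prop:SO(3)/SO(2)=S^2}, and that the resulting isomorphism is $\SO(3)$-equivariant (the latter being automatic from the construction of the orbit map).

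The first technical step is to prove that the orbit morphism is an epimorphism of étale sheaves onto $U'$, i.e.\ that $\SO(3)$ acts transitively on $U'$ in the étale topology. For this I would reuse the three matrices $g_{xy}$, $g_{yz}$, $g_{zx}$ from the proof of Proposition~\ref{prop:SO(3)/SO(2)=S^2}. Given a point of $U'(R)$ presented étale-locally by a generator $(a_1,a_2,a_3)$ with $c\coloneqq a_1^2+a_2^2+a_3^2\in R^\times$, I would pass to the étale cover $\Spec R\left[\sqrt{c}\right]$ and normalise so that the generator lies on the unit sphere; then, on the further standard cover by the loci $\{a_1^2+a_2^2\neq 0\}$, $\{a_2^2+a_3^2\neq 0\}$, $\{a_3^2+a_1^2\neq 0\}$, which cover $U'$ because the three functions sum to $2c\in R^\times$, the appropriate $g_{\bullet}$ carries $e_3$ to a generator of the prescribed line. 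This is the same bookkeeping as in the cited proof and shows the orbit morphism is étale-locally surjective.

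The second and decisive step is the computation of the stabiliser group scheme of $[e_3]$. For a $\bZ\left[1/2\right]$-algebra $R$ and $g\in\SO(3,R)$, the condition $g\cdot[e_3]=[e_3]$ says that $v_3(g)$ generates the same line bundle as $e_3$, hence $v_3(g)=\lambda e_3$ for some $\lambda\in R^\times$; preservation of the form $(-,-)$ forces $\lambda^2=1$, and combining this with orthogonality and $\det g=1$ pins $g$ down to block-diagonal form. Identifying the resulting subgroup scheme with the copy of $\SO(2)$ of Proposition~\ref{prop:SO(3)/SO(2)=S^2} is the crux of the argument, and this is the step I expect to be the main obstacle: it is precisely here that the scalar $\lambda$ and the determinant condition must be reconciled, so the whole content of the theorem is concentrated in controlling this scalar carefully.

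With étale-local transitivity and the stabiliser in hand, the isomorphism $\SO(3)/\SO(2)\cong U'$ would then follow formally, exactly as in Proposition~\ref{prop:SO(3)/SO(2)=S^2}~(2): the orbit morphism induces a monomorphism $\SO(3)/\SO(2)\hookrightarrow U'$, which is an epimorphism by the transitivity established above and therefore an isomorphism. Representability of the quotient on the source side is furnished by \cite{MR0212024} Corollaire~5.6, and $\SO(3)$-equivariance of the isomorphism is inherited directly from the equivariance of $g\mapsto[v_3(g)]$.
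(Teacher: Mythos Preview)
Your overall strategy matches the paper's: both take the orbit morphism at $[e_3]$ and establish \'etale-local surjectivity via the matrices $g_{xy},g_{yz},g_{zx}$ from Proposition~\ref{prop:SO(3)/SO(2)=S^2}. The paper simply asserts that ``the $\SO(3)$-orbit attached to $[0:0:1]$'' gives a monomorphism $\SO(3)/\SO(2)\hookrightarrow U'$ without checking the stabiliser, whereas you have (rightly) singled out this verification as the crux.

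Your instinct is correct, and in fact this is where the argument breaks: the stabiliser of $[e_3]$ in $\SO(3)$ is $O(2)$, not $\SO(2)$. With $v_3(g)=\lambda e_3$ and $\lambda^2=1$ as you note, orthogonality forces $g_{31}=g_{32}=0$, and $\det g=1$ yields $g=\left(\begin{smallmatrix} h & 0\\ 0 & \det h\end{smallmatrix}\right)$ with $h\in O(2,R)$ \emph{arbitrary}; nothing pins down $\lambda=1$. Concretely, $\diag(1,-1,-1)\in\SO(3,\bZ[1/2])$ fixes $[e_3]$ but does not lie in the embedded $\SO(2)$. Hence the orbit map factors as
\[
\SO(3)/\SO(2)\twoheadrightarrow \SO(3)/O(2)\hookrightarrow U',
\]
and the first arrow is the antipodal double cover of the sphere $\Spec\bZ[1/2,x,y,z]/(x^2+y^2+z^2-1)$ (cf.\ the Remark after Proposition~\ref{prop:SO(3)/SO(2)=S^2}), not an isomorphism. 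So the stabiliser identification you set out to prove cannot be completed; the statement as written should read $\SO(3)/O(2)\cong U'$, and the paper's own proof shares this slip.
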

	
	\begin{proof}
		Take the $\SO(3)$-orbit attached to $[0~0~1]^T$ to get a monomorphism
		\[\SO(3)/\SO(2)\hookrightarrow U'.\]
		
		To see that this morphism is epic, take an arbitrary $\bZ\left[1/2\right]$-algebra $R$ and an $R$-point $\cV\in U'(R)$. We wish to show that $\cV$ lies in the image of the above morphism. We may identify $\cV$ with a pair of a line bundle $\cL$ on $\Spec R$ and its global sections $(a_1,a_2,a_3)$ of $\mathcal L$ which (locally) generate $\mathcal L$ by \cite[Theorem 7.1]{MR0463157}.
		
		Since the assertion is Zariski local by Lemma \ref{lem:imisetalelocal}, we may assume $\mathcal L$ is the coordinate ring of $\Spec R$. In particular, $a_1,a_2,a_3$ are generators of $R$ as an $R$-module with $a^2_1+a^2_2+a^2_3\in R^\times$. We may replace $R$ by $R\left[\sqrt{a^2_1+a^2_2+a^2_3}\right]$ to assume $a^2_1+a^2_2+a^2_3=1$. Then, use the matrices $g_{xy}, g_{yz}, g_{zx}$ in the proof of Proposition \ref{prop:SO(3)/SO(2)=S^2} to see that there \'etale locally exists $g'\in \SO(3,R)$ such that $g'[0~0~1]^T=[a_1~a_2~a_3]^T$. This completes the proof.
	\end{proof}
	We next study $Z'$. Let $P$ be the parabolic subgroup of $\SL_3$ defined by
	\[P(A)=\{g=(g_{ij})\in\SL_3(A):~g_{21}=g_{31}=0\}.\]
	Take the $\SL_3$-orbit of $\bP^2$ attached to $[1~0~0]^T$ to get an isomorphism $\SL_3/P\cong\bP^2$ (see \cite[I.5.6 (3)]{MR2015057}).
	
	We can define an $\SO(3)$-equivariant morphism
	\[i':\cB_{\SO(3)}\overset{i_{\clo}}{\hookrightarrow}
	\cB_{\SL_3}\cong \SL_3/B_{\std}\to \SL_3/P\cong\bP^2,\]
	where the map $\SL_3/B_{\std}\to\SL_3/P$ is the quotient map attached to $B_{\std}\subset P$.
	\begin{prop}\label{prop:i'iscloimm}
		The morphism $i'$ is an $\SO(3)$-equivariant closed immersion.
	\end{prop}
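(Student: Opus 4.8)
The plan is to factor $i'$ as a composite of an isomorphism onto its image and a closed immersion, exploiting the identification $Z'=\Proj\bZ\left[1/2,x,y,z\right]/(x^2+y^2+z^2)\subset\bP^2$ and the results already established. First I would check that $i'$ factors through $Z'$: since $i'$ is built from $i_{\clo}$ followed by the projection $\cB_{\SL_3}\cong\SL_3/B_{\std}\to\SL_3/P\cong\bP^2$, the composite sends a $\theta$-stable Borel $gB_{\std}g^{-1}$ (equivalently, by Lemma \ref{lem:translation2}, a $g$ with $c_1(g)=c_3(g)=0$) to the class of the first column vector $v_1(g)\in\bP^2$. The condition $c_1(g)=(v_1(g),v_1(g))=0$ says precisely that this point lies on the quadric $x^2+y^2+z^2=0$, i.e.\ in $Z'$. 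This gives a morphism $\cB_{\SO(3)}\to Z'$ which is $\SO(3)$-equivariant by construction, and it suffices to prove this morphism is an isomorphism, because $Z'\hookrightarrow\bP^2$ is a closed immersion (it is cut out by a single homogeneous equation).

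Next I would construct the $\SO(3)$-orbit map: the class $\left[\begin{smallmatrix}1\\\sqrt{-1}\\0\end{smallmatrix}\right]$ lies on the quadric, and over $\bZ[1/2,\sqrt{-1}]$ one computes its $\SO(3)$-stabilizer, getting a monomorphism $\SO(3)/H\hookrightarrow Z'\otimes\bZ[1/2,\sqrt{-1}]$ for the appropriate subgroup $H$; alternatively, and more cleanly, I would argue directly that the morphism $\cB_{\SO(3)}\to Z'$ is a monomorphism and an epimorphism of étale sheaves, hence an isomorphism. Monomorphism: two $\theta$-stable Borels with the same image in $\bP^2$ have the same first column span $Fv_1$ with $(v_1,v_1)=0$; but Property (C) (Lemma \ref{lem:flagcriterion2}, Theorem \ref{thm:moduliclosed}) forces $V_2=v_1^{\perp}$, so the full flag — hence the Borel — is determined by $Fv_1$ alone. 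This shows $\cB_{\SO(3)}\to Z'$ is injective on $R$-points for reduced $R$, and one upgrades to a monomorphism of étale sheaves by the usual fpqc/étale-local argument (or by noting $\cB_{\SO(3)}$ is reduced, being smooth over $\bZ[1/2]$). Epimorphism: given $\cV\in Z'(R)$, étale-locally $\cV$ is $\left[\begin{smallmatrix}a_1\\a_2\\a_3\end{smallmatrix}\right]$ with $a_1^2+a_2^2+a_3^2=0$ and the $a_i$ generating $R$; I would produce étale-locally an element of $\SO(3,R)$ carrying the base point to $\cV$, using explicit matrices analogous to $g_{xy},g_{yz},g_{zx}$ from the proof of Proposition \ref{prop:SO(3)/SO(2)=S^2} adapted to the isotropic quadric, and noting that the preimage flag $v_1^{\perp}$ is automatically $\theta$-stable.

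The main obstacle I anticipate is the epimorphism step: over the isotropic quadric $x^2+y^2+z^2=0$ the local sections $\sqrt{x^2+y^2}$ etc.\ used in Proposition \ref{prop:SO(3)/SO(2)=S^2} behave differently (for instance $x^2+y^2=-z^2$ there), so one must choose a genuinely different étale (or Zariski) cover of $Z'$ and write down orthogonal frames over it — this is the computational heart of the argument, though purely routine once the right cover is chosen. A clean alternative that sidesteps the explicit frames is to invoke that $\cB_{\SO(3)}$ and $Z'$ are both smooth proper $\bZ[1/2]$-schemes of relative dimension $1$ with geometrically connected fibres, and that the map is a monomorphism inducing an isomorphism on geometric fibres (each fibre being $\mathbb P^1$ with the $\SO(3)$-action having a single closed orbit); a monomorphism of smooth proper schemes over a base that is an isomorphism fibrewise is an isomorphism. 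I would present the direct frame computation as the primary proof and mention the fibrewise argument as a remark if space permits. Finally, having shown $\cB_{\SO(3)}\xrightarrow{\sim}Z'$, I conclude $i'$ is the composite of this isomorphism with the closed immersion $Z'\hookrightarrow\bP^2$, hence an $\SO(3)$-equivariant closed immersion.
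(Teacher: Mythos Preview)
Your route is genuinely different from the paper's, and in fact you are trying to prove strictly more than the proposition asks: you aim to establish the isomorphism $\cB_{\SO(3)}\xrightarrow{\sim}Z'$ and then read off the closed immersion, whereas the paper proves the closed immersion first and only afterwards (in a separate theorem) identifies the image with $Z'$. The paper's argument is much shorter: since $\cB_{\SO(3)}$ is proper over $\bZ[1/2]$ and $\bP^2$ is separated, $i'$ is proper, and a proper monomorphism is a closed immersion; so it suffices to check that $i'$ is a monomorphism. For this the paper base-changes to $\bZ[1/2,\sqrt{-1}]$, where $\cB_{\SO(3)}$ acquires a base point $B_{\clo}$, and reduces everything to the single statement that the stabilizer $B_{\clo,\SO(3)}=\SO(3)\cap B_{\clo}$ already equals the larger-looking stabilizer $\SO(3)\cap g_{\clo}Pg_{\clo}^{-1}$ (Lemma~\ref{lem:intersectionso3}). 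That equality is checked by a Lie algebra dimension count on geometric fibres.

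Your flag-theoretic monomorphism argument is a legitimate alternative to this stabilizer comparison, and it can be made to work over arbitrary (not just reduced) $R$: \'etale locally $B=gB_{\std}g^{-1}$ with $c_1(g)=c_3(g)=0$, hence $v_1(g),v_2(g)\in v_1(g)^\perp$; since $v_1(g)$ is unimodular, $v_1(g)^\perp$ is a rank-$2$ summand, so the rank-$2$ summand $V_2$ must equal $v_1(g)^\perp$. Thus the Borel is determined by its image in $\bP^2$. The gap in your proposal is elsewhere: having obtained the monomorphism, you then embark on the epimorphism step, which you yourself flag as the ``computational heart'' and do not carry out. That step is simply unnecessary for the proposition as stated---once you have a monomorphism, the properness of $\cB_{\SO(3)}$ finishes the proof. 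By aiming for the full identification with $Z'$ you have made the problem harder and left its hard part undone.
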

	To prove this, we realize $i'$ as composition of morphisms between quotient spaces over $\bZ\left[1/2,\sqrt{-1}\right]$.
	
	Set
	\[g_{\clo}=\left(\begin{array}{ccc}
		1&0&-\sqrt{-1}\\
		-\sqrt{-1}&0&1\\
		0&1&0
	\end{array}\right).\]
	Define a cocharacter $\mu:\bG_m\to \SO(3)$ over $\bZ\left[1/2,\sqrt{-1}\right]$ by
	\[\mu(a)=g_{\clo}\diag(a,a^{-1},1)g_{\clo}^{-1}.\]
	Define a $\theta$-stable Borel subgroup of $\SL_3$ over $\bZ\left[1/2\right]$ by $B_{\clo}=P_{\SL_3}(\mu)$ ($\theta=((-)^T)^{-1}$). It is evident by definitions that $B_{\clo}=g_{\clo} B_{\std} g_{\clo}^{-1}$. Set
	\[B_{\clo,\SO(3)}=\SO(3)\cap B_{\clo}.\]
	\begin{lem}\label{lem:intersectionso3}
		We have $B_{\clo,\SO(3)}=\SO(3)\cap g_{\clo}Pg_{\clo}^{-1}$.
	\end{lem}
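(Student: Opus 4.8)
I want to prove the equality of the two subgroup schemes $B_{\clo,\SO(3)} = \SO(3)\cap B_{\clo}$ and $\SO(3)\cap g_{\clo}Pg_{\clo}^{-1}$ of $\SO(3)$ over $\bZ\left[1/2\right]$. Since $B_{\clo} = g_{\clo}B_{\std}g_{\clo}^{-1} \subset g_{\clo}Pg_{\clo}^{-1}$, one inclusion is immediate. So the content is the reverse inclusion: an element of $\SO(3)(R)$ lying in $g_{\clo}P(R)g_{\clo}^{-1}$ already lies in $g_{\clo}B_{\std}(R)g_{\clo}^{-1}$. Because both sides are $\SO(3)$-subgroup schemes cut out inside $\SO(3)$ by closed conditions, it suffices to check the equality on $R$-points for every $\bZ\left[1/2\right]$-algebra $R$; and since the defining equations involve $\sqrt{-1}$ through $g_{\clo}$, I would first pass to $R\left[\sqrt{-1}\right]$ — this is harmless because $\bZ\left[1/2\right]\to\bZ\left[1/2,\sqrt{-1}\right]$ is faithfully flat, so an inclusion of affine subschemes may be checked after this base change (Lemma \ref{lem:ffreduction}, or simply faithful flatness of the coordinate rings).

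\textbf{Main computation.} After conjugating everything by $g_{\clo}^{-1}$, the claim becomes: $B_{\std}\cap g_{\clo}^{-1}\SO(3)g_{\clo} = P\cap g_{\clo}^{-1}\SO(3)g_{\clo}$. Equivalently, if $b=(b_{ij})\in P(R)$ — so $b_{21}=b_{31}=0$ — and $g_{\clo}bg_{\clo}^{-1}\in\SO(3,R)$, then in fact $b\in B_{\std}(R)$, i.e.\ also $b_{32}=0$. The orthogonality condition $g_{\clo}bg_{\clo}^{-1}\in\SO(3,R)$ translates, since $g_{\clo}^Tg_{\clo}$ is an explicit symmetric matrix, into $g_{\clo}^{-T}b^T(g_{\clo}^Tg_{\clo})b g_{\clo}^{-1} = 1$, i.e.\ $b^T(g_{\clo}^Tg_{\clo})b = g_{\clo}^Tg_{\clo}$. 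I would compute $Q := g_{\clo}^Tg_{\clo}$ directly from the given $g_{\clo}=\left(\begin{smallmatrix}1&0&-\sqrt{-1}\\ -\sqrt{-1}&0&1\\ 0&1&0\end{smallmatrix}\right)$, getting a concrete $3\times 3$ matrix over $\bZ\left[1/2,\sqrt{-1}\right]$, then expand the matrix equation $b^TQb = Q$ for $b$ upper-block-triangular of the form permitted by $P$ (first column $(b_{11},0,0)^T$, with $b_{11}b_{22}b_{33} - b_{11}b_{23}b_{32}=\det b=1$, etc.). Reading off the entries should force $b_{12}=b_{13}=b_{32}=0$ together with the expected relations among the diagonal and off-diagonal entries, exactly as in the analogous computation in the proof of Proposition \ref{prop:SO(3)/SO(2)=S^2}, which handled $g_1$ in place of $g_{\clo}$. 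The cleanest packaging is probably to identify $g_{\clo}^{-1}\SO(3)g_{\clo}$ as $\Oo(Q)\cap\SL_3$ (determinant one) and then intersect with $P$.

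\textbf{Shortcut via the flag picture.} An alternative, slicker route: the parabolic $g_{\clo}Pg_{\clo}^{-1}$ is the stabilizer in $\SL_3$ of the line $g_{\clo}\cdot[e_1]\in\bP^2$, which is the isotropic line $\left[\,1:-\sqrt{-1}:0\,\right]$ for the standard form (it lies on $x^2+y^2+z^2=0$), while $B_{\clo}=g_{\clo}B_{\std}g_{\clo}^{-1}$ is the stabilizer of the full flag $g_{\clo}\cdot(0\subset[e_1]\subset[e_1,e_2]\subset R^3)$. The extra datum in the flag over the line is the plane $g_{\clo}\cdot[e_1,e_2]$; I claim that for an element of $\SO(3)$ fixing the isotropic line, the stabilized plane is automatically the orthogonal complement of that line, hence uniquely determined. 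This is the statement that $\SO(3)$ acting on (the $R$-points of) the variety of flags refining a fixed isotropic line acts trivially, which one sees because the orthogonal complement of an isotropic line through itself is the only $\SO$-stable plane containing it. Making this argument clean over a general base $R$ (where the form restricted to the plane may be subtle) is the delicate point, so in practice I expect the brute-force entrywise computation of $b^TQb=Q$ to be the safest path to write down.

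\textbf{Expected main obstacle.} The only real work is the matrix identity $b^TQb=Q$: it is a small but slightly unpleasant system of scalar equations over $\bZ\left[1/2,\sqrt{-1}\right]$, and one must be careful that the conclusion $b_{32}=0$ genuinely follows without dividing by anything not known to be a unit — here one uses that the diagonal entries $b_{11},b_{22},b_{33}$ are units because $\det b=1$ forces each to be, exactly as in Lemma \ref{lem:openorbit1} and Proposition \ref{prop:SO(3)/SO(2)=S^2}. Once that is in hand, faithfully flat descent along $\bZ\left[1/2\right]\to\bZ\left[1/2,\sqrt{-1}\right]$ finishes the equality over $\bZ\left[1/2\right]$.
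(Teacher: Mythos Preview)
Your approach is correct but genuinely different from the paper's. You work directly on $R$-points: after conjugating by $g_{\clo}^{-1}$, you take $b\in P(R)$ with $g_{\clo}bg_{\clo}^{-1}\in\SO(3,R)$, set $Q=g_{\clo}^Tg_{\clo}$, and read off $b_{32}=0$ from $b^TQb=Q$. (Concretely, $Q$ has $Q_{12}=0$ while $(b^TQb)_{12}=-2\sqrt{-1}\,b_{11}b_{32}$; here $b_{11}\in R^\times$ because $\det b=b_{11}(b_{22}b_{33}-b_{23}b_{32})=1$ --- note that only $b_{11}$, not all diagonal entries, is a priori a unit, so your remark about all $b_{ii}$ is slightly off but harmless.) Your ``flag'' shortcut is in fact rigorous over any base and is the cleanest route: $g_{\clo}[e_1]$ is isotropic and $g_{\clo}[e_1,e_2]$ is its orthogonal complement, so any $k\in\SO(3,R)$ preserving the line automatically preserves the plane, hence lies in $B_{\clo}$.

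The paper instead argues structurally: since $B_{\clo,\SO(3)}$ is a Borel of $\SO(3)$ it is flat over $\bZ[1/2,\sqrt{-1}]$, so by \cite{MR0238860} Corollaire 17.9.5 one may pass to geometric fibers; there a Lie-algebra computation gives $\dim(\fso(3)\cap\Ad(g_{\clo})\fp)=2$, whence the intersection is smooth of dimension $2$ and, containing a Borel (which is its own normalizer), must equal it. Your argument is more elementary and uniform in $R$; the paper's avoids the matrix arithmetic at the cost of a fiberwise reduction and a Lie-algebra calculation. One minor point: $g_{\clo}$, $B_{\clo}$, and $B_{\clo,\SO(3)}$ are already defined over $\bZ[1/2,\sqrt{-1}]$ (the paper's proof works there), so your final descent step back to $\bZ[1/2]$ is unnecessary.
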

	\begin{proof}
		Recall that $B_{\clo,\SO(3)}$ is a Borel subgroup of $\SO(3)$ (\cite[Proposition 5.2.5 (2)]{hayashijanuszewski}). In particular, $B_{\clo,\SO(3)}$ is flat over $\bZ\left[1/2,\sqrt{-1}\right]$. In view of \cite[Corollaire 17.9.5]{MR0238860}, we may pass to geometric fibers.
		
		Let $\fso(3)$ and $\fp$ be the Lie algebras of $\SO(3)$ and $P$ respectively. Then, we have
		\[2=\dim B_{\clo,\SO(3)}\leq\dim\SO(3)\cap g_{\clo}Pg_{\clo}^{-1}\leq \dim\fso(3)\cap g_{\clo}\fp g_{\clo}^{-1}=2,\]
		where the last equality is followed by straightforward computations. In particular, $\SO(3)\cap g_{\clo}Pg_{\clo}^{-1}$ is smooth of dimension $2$ (\cite[Proposition 1.37]{MR3729270}). Since $B_{\clo,\SO(3)}$ is a Borel subgroup of $\SO(3)$,
		we have $B_{\clo,\SO(3)}=\SO(3)\cap g_{\clo} P g_{\clo}^{-1}$.
	\end{proof}
	
	\begin{proof}[Proof of Proposition \ref{prop:i'iscloimm}]
		It will suffice to prove that $i'$ is a monomorphism. We may work over $\bZ\left[1/2,\sqrt{-1}\right]$. Then, $i'$ can be identified with
		\[\SO(3)/B_{\clo,\SO(3)}
		\hookrightarrow
		\SL_3/B_{\clo}\cong\SL_3/B_{\std}\to\SL_3/P,\]
		where $\SL_3/B_{\clo}\cong\SL_3/B_{\std}$ is defined by $gB_{\clo}\mapsto gg_{\clo}B_{\std}$. The assertion now follows from Lemma \ref{lem:intersectionso3}.
	\end{proof}
	
	To relate $\cB_{\SO(3)}$ with $Z'$, let us recall the moduli description of $Z'$.
	
	\begin{property}
		Let $R$ be a commutative $\bZ\left[1/2\right]$-algebra. We say an $R$-point $(\cL,(a_1,a_2,a_3))\in \bP^2(R)$ satisfies Property (C)' if
		$\sum_{i=1}^3 a_i\otimes a_i=0$
		as a global section of $\cL\otimes_{\cO_{\Spec R}}\cL$.
	\end{property}
	
	Then, for each $R$, $Z'(R)$ consists of $R$-points of $\bP^2(R)$ satisfying Property (C)'. If $R$ is reduced, a point $(\cL,(a_1,a_2,a_3))\in \bP^2(R)$ satisfies (C)' if and only if its geometric fibers satisfy (C)'.
	
	\begin{thm}
		The map $i'$ is an isomorphism onto $Z'$.
	\end{thm}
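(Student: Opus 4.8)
The plan is to exploit Proposition~\ref{prop:i'iscloimm}, which already identifies $i'$ with a closed immersion, and then to cut out its image as the conic $Z'$. First I would show that $i'$ factors through the closed immersion $Z'\hookrightarrow\bP^2$. The flag scheme $\cB_{\SO(3)}$ is smooth over $\bZ\left[1/2\right]$ (\cite{MR0218363} Corollaire 5.8.3), hence regular, hence reduced; so it suffices to check that $|i'|$ carries $|\cB_{\SO(3)}|$ into the closed set $|Z'|$, and for this it is enough to treat geometric points. A geometric point of $\cB_{\SO(3)}$ corresponds under $i_{\clo}$ to a $\theta$-stable Borel subgroup of $\SL_3$ over an algebraically closed field $F$ of characteristic $\neq2$, i.e.\ (Theorem~\ref{thm:moduliclosed}) to a flag $(0\subset Fv_1\subset V_2\subset F^3)$ satisfying Property (C), while the quotient map $\SL_3/B_{\std}\to\SL_3/P\cong\bP^2$ sends such a flag to the line $[v_1]$. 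Since Property (C) forces $(v_1,v_1)=0$, the line $[v_1]$ satisfies $x^2+y^2+z^2=0$, i.e.\ lies on $Z'$. Hence $i'$ factors through $Z'$, and, being a closed immersion, it yields a closed immersion $j\colon\cB_{\SO(3)}\hookrightarrow Z'$.

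Next I would prove that $j$ is an isomorphism by reducing to geometric fibers over $\Spec\bZ\left[1/2\right]$. Write $I$ for the ideal sheaf of $j$ on $Z'$, so that $0\to I\to\cO_{Z'}\to j_*\cO_{\cB_{\SO(3)}}\to0$. As $\cB_{\SO(3)}$ is flat over $\bZ\left[1/2\right]$, this sequence remains exact after $-\otimes_{\bZ\left[1/2\right]}\kappa(s)$ for $s\in\Spec\bZ\left[1/2\right]$, so the pullback of $I$ to the fiber $Z'_s$ is the ideal of $\cB_{\SO(3),s}$ inside $Z'_s$. Since $\bZ\left[1/2\right]$ is Noetherian and $I$ is coherent on the proper $\bZ\left[1/2\right]$-scheme $Z'$, Nakayama's lemma reduces $I=0$ to the claim that $j$ is an isomorphism on every geometric fiber. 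Over an algebraically closed field of characteristic $\neq2$, $\cB_{\SO(3),s}$ is the flag variety of $\SO(3)$, an integral projective curve (indeed $\SO(3)$ is adjoint of rank one, so this is $\bP^1$), while $Z'_s$ is the smooth plane conic $x^2+y^2+z^2=0$, likewise an integral projective curve. The base change of $j$ is then a closed immersion of an integral curve into an integral curve, so its image is a one-dimensional closed subset of the irreducible $Z'_s$, hence all of $Z'_s$; and a surjective closed immersion onto the reduced scheme $Z'_s$ is an isomorphism. Therefore $I=0$, i.e.\ $i'$ is an isomorphism onto $Z'$.

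The only real work is this fiberwise step: one must know, uniformly in the residue characteristic $\neq2$, that the geometric fibers of both $\cB_{\SO(3)}$ and $Z'$ are integral curves and that $j$ restricts to a closed immersion between them, so that the dimension count forces surjectivity — everything else is formal once Proposition~\ref{prop:i'iscloimm} is in hand. (Alternatively, one could establish the factorization in the first paragraph by base changing to $\bZ\left[1/2,\sqrt{-1}\right]$, using that $\SO(3)$ preserves the form $x^2+y^2+z^2$ and that $g_{\clo}\cdot[1:0:0]=[1:-\sqrt{-1}:0]$ is isotropic, and descending by Lemma~\ref{lem:meaningofdescent}; and one could replace the second paragraph by the remark that $\cB_{\SO(3)}$ is itself integral of dimension $2$ — by faithfully flat descent from $\bZ\left[1/2,\sqrt{-1}\right]$, over which it becomes $\bP^1$ — and then running the same dimension count directly against the integral surface $Z'$.)
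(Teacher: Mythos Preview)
Your argument is correct. The overall skeleton matches the paper's proof---use Proposition~\ref{prop:i'iscloimm}, reduce to a set-theoretic statement via reducedness, and settle that on geometric points---but the two executions diverge in both halves.

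For the factorization through $Z'$, the paper does not invoke Theorem~\ref{thm:moduliclosed}. Instead it computes the single value $i'(B_{\clo,\SO(3)})=\left[1:-\sqrt{-1}:0\right]\in Z'(F)$ and then uses $\SO(3)$-equivariance of $i'$ together with transitivity of $\SO(3,F)$ on $\cB_{\SO(3)}(F)$ to propagate this to all of $\cB_{\SO(3)}(F)$. Your route via Property~(C) is equally valid and arguably more conceptual, since it reads the isotropy of $v_1$ directly off the moduli description. For the surjectivity step, the paper again argues by hand: given an isotropic line $Fv\in Z'(F)$ it picks $u$ with $(v,u)\neq0$ and invokes the computation inside the proof of Lemma~\ref{lem:translation1}~(1) to produce $k\in\SO(3,F)$ carrying $\left[1:-\sqrt{-1}:0\right]$ to $Fv$; in effect it shows $\SO(3,F)$ acts transitively on $Z'(F)$. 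Your dimension count---both geometric fibers are integral projective curves, so a closed immersion between them is surjective---avoids this explicit construction entirely and would transport unchanged to other rank-one situations. One minor remark: the Nakayama layer is heavier than needed. Since $Z'$ is itself reduced (smooth over the reduced base $\Spec\bZ\left[1/2\right]$, as the paper notes via \cite{MR1917232}), once $|j|$ is surjective the ideal sheaf is contained in the nilradical and hence vanishes; the fiberwise exactness and Nakayama step can be dropped.
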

	\begin{proof}
		Observe that the structure morphisms
		\[\begin{array}{cc}
			\cB_{\SO(3)}\to\Spec\bZ\left[1/2\right],&Z'\to\Spec\bZ\left[1/2\right]
		\end{array}\]
		are smooth surjective (see \cite[4 Example 3.37]{MR1917232} for the smoothness of $Z'$). Since the reducedness is local in the smooth topology, $\cB_{\SO(3)}$ and $Z'$ are reduced. In view of the uniqueness of reduced structure on the underlying set of a closed subscheme, it will suffice to show that $|i'|$ is surjective onto $Z'$.
		
		Let $F$ be an algebraically closed field over $\bZ\left[1/2,\sqrt{-1}\right]$. Notice that
		\[i'(B_{\clo,\SO(3)})=F(1~-\sqrt{-1}~0)^T \in Z'(F).\]
		Since $\SO(3,F)$ acts transitively on $\cB_{\SO(3)}(F)$, $i'_F$ factors through $Z'(F)$. Let $Fv\in Z'(F)$. Then, we can choose a vector $u\in F^3$ such that $(v,u)\neq 0$. By the proof of Lemma \ref{lem:translation1} (1), there is a matrix $k\in\SO(3,F)$ such that
		$F k(1~-\sqrt{-1}~0)^T=Fv$.
		This completes the proof.
	\end{proof}
	
	\begin{cor}\label{cor:kgp}
		The $\SO(3)$-equivariant immersions
		\[\begin{array}{cc}
			\SO(3)/\SO(2)\cong U'\subset \bP^2\cong \SL_3/P,
			&\cB_{\SO(3)}\overset{i_{\clo}}{\hookrightarrow}
			\cB_{\SL_3}\cong \SL_3/B_{\std}\to \SL_3/P	
		\end{array}\]
		form a set-theoretic decomposition of $\SL_3/P$.
	\end{cor}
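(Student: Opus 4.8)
The plan is to reduce everything to a statement over algebraically closed fields by Theorem~\ref{thm:fieldwisedec}, and then to read off the answer from the moduli descriptions of $U'$ and $Z'$ already obtained. Concretely, the second immersion in the statement is by construction the morphism $i'\colon\cB_{\SO(3)}\overset{i_{\clo}}{\hookrightarrow}\cB_{\SL_3}\cong\SL_3/B_{\std}\to\SL_3/P\cong\bP^2$, which the preceding theorem identifies with an isomorphism onto $Z'$; and the first immersion is the open immersion $U'\hookrightarrow\bP^2$. So it suffices to show that $\{U'\hookrightarrow\bP^2,\ Z'\hookrightarrow\bP^2\}$ is a set theoretic decomposition of $\bP^2$.

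By Theorem~\ref{thm:fieldwisedec} this amounts to checking that
\[U'(F)\sqcup Z'(F)\to\bP^2(F)\]
is a bijection for every algebraically closed field $F$ over $\bZ\left[1/2\right]$. Any such $F$ has characteristic $\neq 2$, so Properties (O)' and (C)' are defined, and $\bP^2(F)$ is the set of one-dimensional subspaces $Fv\subset F^3$. Each such line satisfies exactly one of $(v,v)\neq 0$ and $(v,v)=0$, i.e.\ exactly one of (O)' and (C)'. On the other hand, by the definition of the open subscheme $U'$ the set $U'(F)$ is precisely the set of lines satisfying (O)', and since $F$ is reduced, the description of $Z'$ on reduced rings shows that $Z'(F)$ is precisely the set of lines satisfying (C)'. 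As both $U'\hookrightarrow\bP^2$ and $Z'\hookrightarrow\bP^2$ are monomorphisms, $U'(F)$ and $Z'(F)$ inject into $\bP^2(F)$ with disjoint images whose union is all of $\bP^2(F)$; hence the displayed map is a bijection, and Theorem~\ref{thm:fieldwisedec} gives the claim.

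I do not expect a genuine obstacle: the real content sits in the moduli descriptions of $U'$ and $Z'$, and in the elementary observation that a line in $F^3$ is either isotropic or anisotropic. The only points that require a little care are bookkeeping ones — confirming that the composite through $\SL_3/B_{\std}$ really is the map $i'$ (so that its image is $Z'$), and that the $\SO(3)$-equivariant identifications $\SL_3/P\cong\bP^2$ and $\SL_3/B_{\std}\cong\cB_{\SL_3}$ used in the statement are the same ones fixed earlier — both of which have already been settled above.
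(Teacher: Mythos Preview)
Your proposal is correct and matches the paper's implicit argument: the corollary is stated without proof because once $i'$ is identified with an isomorphism onto $Z'$, the decomposition $|\bP^2|=|U'|\sqcup|Z'|$ follows immediately from the dichotomy (O)'/(C)' via Theorem~\ref{thm:fieldwisedec}, exactly as you spell out.
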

	
	One can think of Corollary \ref{cor:kgp} as a $\bZ\left[1/2\right]$-analog of the $\SO(3,\bC)$-orbit decomposition of $\SL_3(\bC)/P(\bC)$.
	\renewcommand{\abstractname}{Acknowledgements}
	\begin{abstract}
		I am grateful to the referees for careful reading of this manuscript. I would like to thank one of them for pointing out Remark \ref{rem:referee}.
	\end{abstract}
	
	\section*{Funding}
	This work was supported by JSPS KAKENHI Grant Number 21J00023.
	
	\section*{Declarations}
	\subsection*{Conflict of Interest}
	The author declares no competing interests.


\begin{thebibliography}{10}
		
		\bibitem{MR0237513}
		M.~Demazure.
		\newblock Topologies et faisceaux.
		\newblock In {\em Sch\'{e}mas en {G}roupes ({S}\'{e}m. {G}\'{e}om\'{e}trie
			{A}lg\'{e}brique, {I}nst. {H}autes \'{E}tudes {S}ci., 1963), {F}asc. 1,
			{E}xpos\'{e} 4}, page~91. Inst. Hautes \'{E}tudes Sci., Paris, 1963.
		
		\bibitem{MR0228502}
		M.~Demazure.
		\newblock Groupes r\'{e}ductifs---g\'{e}n\'{e}ralit\'{e}s.
		\newblock In {\em Sch\'{e}mas en {G}roupes ({S}\'{e}m. {G}\'{e}om\'{e}trie
			{A}lg\'{e}brique, {I}nst. {H}autes \'{e}tudes {S}ci., 1964), {F}asc. 6,
			{E}xpos\'{e} 19}, page~34. Inst. Hautes \'{E}tudes Sci., Paris, 1965.
		
		\bibitem{MR0218363}
		M.~Demazure.
		\newblock Groupes r\'{e}ductifs: {D}\'{e}ploiements, sous-groupes,
		groupes-quotients.
		\newblock In {\em Sch\'{e}mas en {G}roupes ({S}\'{e}m. {G}\'{e}om\'{e}trie
			{A}lg\'{e}brique, {I}nst. {H}autes \'{E}tudes {S}ci., 1964), {F}asc. 6,
			{E}xpos\'{e} 22}, page 106. Inst. Hautes \'{E}tudes Sci., Paris, 1965.
		
		\bibitem{MR4225278}
		U.~G\"{o}rtz and T.~Wedhorn.
		\newblock {\em Algebraic geometry {I}. {S}chemes}.
		\newblock Springer Studium Mathematik---Master. Springer Spektrum, Wiesbaden,
		second edition, [2020] \copyright 2020.
		\newblock With examples and exercises.
		
		\bibitem{MR0212024}
		A.~Grothendieck.
		\newblock Groupes diagonalisables.
		\newblock In {\em Sch\'{e}mas en {G}roupes ({S}\'{e}m. {G}\'{e}om\'{e}trie
			{A}lg\'{e}brique, {I}nst. {H}autes \'{E}tudes {S}ci., 1963/64), {F}asc. 3,
			{E}xpos\'{e} 8}, page~36. Inst. Hautes \'{E}tudes Sci., Paris, 1964.
		
		\bibitem{MR0217086}
		A.~Grothendieck.
		\newblock \'{E}l\'{e}ments de g\'{e}om\'{e}trie alg\'{e}brique. {IV}. \'{E}tude
		locale des sch\'{e}mas et des morphismes de sch\'{e}mas. {III}.
		\newblock {\em Inst. Hautes \'{E}tudes Sci. Publ. Math.}, (28):255, 1966.
		
		\bibitem{MR0238860}
		A.~Grothendieck.
		\newblock \'{E}l\'{e}ments de g\'{e}om\'{e}trie alg\'{e}brique. {IV}. \'{E}tude
		locale des sch\'{e}mas et des morphismes de sch\'{e}mas {IV}.
		\newblock {\em Inst. Hautes \'{E}tudes Sci. Publ. Math.}, (32):361, 1967.
		
		\bibitem{MR3970997}
		G.~Harder and A.~Raghuram.
		\newblock {\em Eisenstein cohomology for {${\rm GL}_N$} and the special values
			of {R}ankin-{S}elberg {$L$}-functions}, volume 203 of {\em Annals of
			Mathematics Studies}.
		\newblock Princeton University Press, Princeton, NJ, 2020.
		
		\bibitem{MR3053412}
		M.~Harris.
		\newblock Beilinson-{B}ernstein localization over {$\mathbb Q$} and periods of
		automorphic forms.
		\newblock {\em Int. Math. Res. Not. IMRN}, (9):2000--2053, 2013.
		
		\bibitem{MR4073199}
		M.~Harris.
		\newblock Beilinson-{B}ernstein localization over {$\mathbb{Q}$} and periods of
		automorphic forms: erratum.
		\newblock {\em Int. Math. Res. Not. IMRN}, (3):957--960, 2020.
		
		\bibitem{MR0463157}
		R.~Hartshorne.
		\newblock {\em Algebraic geometry}.
		\newblock Springer-Verlag, New York-Heidelberg, 1977.
		\newblock Graduate Texts in Mathematics, No. 52.
		
		\bibitem{MR3853058}
		T.~Hayashi.
		\newblock Flat base change formulas for {$(\mathfrak{g},K)$}-modules over
		{N}oetherian rings.
		\newblock {\em J. Algebra}, 514:40--75, 2018.
		
		\bibitem{MR4007195}
		T.~Hayashi.
		\newblock Dg analogues of the {Z}uckerman functors and the dual {Z}uckerman
		functors {I}.
		\newblock {\em J. Algebra}, 540:274--305, 2019.
		
		\bibitem{hayashifil}
		T.~Hayashi.
		\newblock Filtrations on the globalization of twisted D-modules over Dedekind schemes.
		\newblock arXiv:2205.07539.
		
		\bibitem{MR4627704}
		T.~Hayashi.
		\newblock Half-integrality of line bundles on partial flag schemes of classical
		{L}ie groups.
		\newblock {\em Bull. Sci. Math.}, 188:Paper No. 103317, 2023.
		
		\bibitem{hayashijanuszewski}
		T.~Hayashi and F.~Januszewski.
		\newblock Families of twisted {$\mathcal D$}-modules and arithmetic models of
		{H}arish-{C}handra modules, 2018.
		\newblock arXiv:1808.10709.
		
		\bibitem{MR2015057}
		J.~C. Jantzen.
		\newblock {\em Representations of algebraic groups}, volume 107 of {\em
			Mathematical Surveys and Monographs}.
		\newblock American Mathematical Society, Providence, RI, second edition, 2003.
		
		\bibitem{1604.04253}
		F.~Januszewski.
		\newblock On period relations for automorphic {$L$}-functions {II}, 2016.
		\newblock arXiv:1604.04253.
		
		\bibitem{MR3770183}
		F.~Januszewski.
		\newblock Rational structures on automorphic representations.
		\newblock {\em Math. Ann.}, 370(3-4):1805--1881, 2018.
		
		\bibitem{MR3937337}
		F.~Januszewski.
		\newblock On period relations for automorphic {$L$}-functions {I}.
		\newblock {\em Trans. Amer. Math. Soc.}, 371(9):6547--6580, 2019.
		
		\bibitem{MR1917232}
		Q.~Liu.
		\newblock {\em Algebraic geometry and arithmetic curves}, volume~6 of {\em
			Oxford Graduate Texts in Mathematics}.
		\newblock Oxford University Press, Oxford, 2002.
		\newblock Translated from the French by Reinie Ern\'{e}, Oxford Science
		Publications.
		
		\bibitem{MR527548}
		T.~Matsuki.
		\newblock The orbits of affine symmetric spaces under the action of minimal
		parabolic subgroups.
		\newblock {\em J. Math. Soc. Japan}, 31(2):331--357, 1979.
		
		\bibitem{MR3729270}
		J.~S. Milne.
		\newblock {\em Algebraic groups}, volume 170 of {\em Cambridge Studies in
			Advanced Mathematics}.
		\newblock Cambridge University Press, Cambridge, 2017.
		\newblock The theory of group schemes of finite type over a field.
		
		\bibitem{MR3495343}
		M.~Olsson.
		\newblock {\em Algebraic spaces and stacks}, volume~62 of {\em American
			Mathematical Society Colloquium Publications}.
		\newblock American Mathematical Society, Providence, RI, 2016.
		
		\bibitem{MR1066573}
		R.~W. Richardson and T.~A. Springer.
		\newblock The {B}ruhat order on symmetric varieties.
		\newblock {\em Geom. Dedicata}, 35(1-3):389--436, 1990.
		
	\end{thebibliography}
\end{document}